\DeclareFontFamily{U}{mathx}{\hyphenchar\font45}
\DeclareFontShape{U}{mathx}{m}{n}{
      <5> <6> <7> <8> <9> <10>
      <10.95> <12> <14.4> <17.28> <20.74> <24.88>
      mathx10
      }{}
\DeclareSymbolFont{mathx}{U}{mathx}{m}{n}
\DeclareMathAccent{\widecheck}{0}{mathx}{"71}
\DeclareMathAccent{\wideparen}{0}{mathx}{"75}
\colorlet{shadecolor}{lightgray!60}
\def\th@plain{%
  \thm@notefont{}
  \itshape 
}
\def\th@definition{%
  \thm@notefont{}
  \normalfont 
}
\renewenvironment{proof}
{\begin{adjustwidth}{0.35cm}{0.35cm}\textit{Proof.}}
{\end{adjustwidth}}
\theoremstyle{definition}
\newtheorem{theorem}{Theorem}[section]
\newtheorem{corollary}{Corollary}[theorem]
\newtheorem{definition}[theorem]{Definition}
\newtheorem{lemma}[theorem]{Lemma}
\newtheorem{proposition}[theorem]{Proposition}
\newmdenv[
  topline=false,
  bottomline=false,
  rightline=false,
  skipabove=\topsep,
  skipbelow=\topsep,
  leftmargin=3pt,
  linewidth=1pt,
  linecolor=gray,
  nobreak=false,
]{lr}
\def\cl{\@ifnextchar{\bgroup}{\@with}{\@without}}
\def\@with#1{\overline{#1}}
\def\@without{\bar}
\def\hatt{\@ifnextchar{\bgroup}{\@withaz}{\@withoutaz}}
\def\@withaz#1{\widehat{#1}}
\def\@withoutaz{\hat}
\def\checkk{\@ifnextchar{\bgroup}{\@withb}{\@withoutb}}
\def\@withb#1{\widecheck{#1}}
\def\@withoutb{\check}
\newcommand{\E}{\mathcal{E}}
\newcommand{\R} {\mathbb{R}}
\newcommand{\K} {\mathbb{K}}
\newcommand{\abs}[1]{\left\lvert #1\right\rvert}
\newcommand{\brac}[1]{\left( #1\right)}
\newcommand{\mat}[1]{\begin{pmatrix}#1\end{pmatrix}}
\newcommand{\eva}[1]{\left.#1\right|}
\newcommand*{\myb}[1]{\emph{#1}}
\newcommand*{\mybb}[1]{\underline{\smash{\textbf{#1}}}}
\definecolor{light-gray}{gray}{0.8}
\definecolor{dark-gray}{gray}{0.2}
\tikzset{->/.style = {decoration={markings,
                                  mark=at position 1 with {\arrow[scale=1.2]{latex'}}},
                      postaction={decorate}}}
\tikzset{<-/.style = {decoration={markings,
                                  mark=at position 0 with {\arrowreversed[scale=1.2]{latex'}}},
                      postaction={decorate}}}
\tikzset{<->/.style = {decoration={markings,
                                   mark=at position 0 with {\arrowreversed[scale=1.2]{latex'}},
                                   mark=at position 1 with {\arrow[scale=1.2]{latex'}}},
                       postaction={decorate}}}
\tikzset{->-/.style = {decoration={markings,
                                   mark=at position #1 with {\arrow[scale=1.5]{latex'}}},
                       postaction={decorate}}}
\tikzset{-<-/.style = {decoration={markings,
                                   mark=at position #1 with {\arrowreversed[scale=1.5]{latex'}}},
                       postaction={decorate}}}
\tikzset{->>/.style = {decoration={markings,
                                  mark=at position 1 with {\arrow[scale=1.2]{latex'}}},
                      postaction={decorate}}}
\tikzset{<<-/.style = {decoration={markings,
                                  mark=at position 1 with {\arrowreversed[scale=1.2]{twolatex'}}},
                      postaction={decorate}}}
\tikzset{<<->>/.style = {decoration={markings,
                                   mark=at position 0 with {\arrowreversed[scale=1.2]{twolatex'}},
                                   mark=at position 1 with {\arrow[scale=1.2]{twolatex'}}},
                       postaction={decorate}}}
\tikzset{->>-/.style = {decoration={markings,
                                   mark=at position #1 with {\arrow[scale=1.5]{twolatex'}}},
                       postaction={decorate}}}
\tikzset{-<<-/.style = {decoration={markings,
                                   mark=at position #1 with {\arrowreversed[scale=1.5]{twolatex'}}},
                       postaction={decorate}}}
\tikzset{circ/.style = {fill, circle, inner sep = 0, minimum size = 4}}
\tikzset{mstate/.style={circle, draw, gray, text=black, minimum width=0.5cm}}
\newlength{\currentparskip}
\newenvironment{minipageparskip}[1]
  {\setlength{\currentparskip}{\parskip}
   \begin{minipage}{#1}
   \setlength{\parskip}{\currentparskip}
  }
  {\end{minipage}}
\newenvironment{minipageparskipb}[1]
  {\setlength{\currentparskip}{\parskip}
   \begin{minipage}[b]{#1}
   \setlength{\parskip}{\currentparskip}
  }
  {\end{minipage}}
  \newenvironment{minipar1}[1]
  {\setlength{\currentparskip}{\parskip}
   \begin{minipage}[t]{#1}
   \setlength{\parskip}{\currentparskip}
  }
  {\end{minipage}}
\renewcommand{\qed}{\nobreak \ifvmode \relax \else
      \ifdim\lastskip<1.5em \hskip-\lastskip
      \hskip1.5em plus0em minus0.5em \fi \nobreak
      \vrule height0.75em width0.5em depth0.25em\fi}
\newcommand\Item[1][]{%
  \vspace{2mm}
  \ifx\relax#1\relax  \item \else \item[#1] \fi
  \abovedisplayskip=0pt\abovedisplayshortskip=0pt~\vspace*{-\baselineskip}\vspace*{-1.5mm}}
  \newcommand\iitem[1][]{%
  \vspace{2mm}
  \ifx\relax#1\relax  \item \else \item[#1] \fi
  \abovedisplayskip=0pt\abovedisplayshortskip=0pt~\vspace*{-\baselineskip}}
\setlist[enumerate,1]{label=\roman*., leftmargin=8mm}
\setlist[itemize,1]{label=\textbullet, leftmargin=6mm}
\DeclareMathOperator{\supp}{supp}
\renewcommand{\Re}{\text{Re}}
\newcommand{\mb}{\mathbf}
\newcommand{\ph}{\,\cdot\,}
\newcommand{\<}{\langle}
\renewcommand{\>}{\rangle}
\newcommand{\st} {\text{ s.t.}\:}
\newcommand{\mathsarray}{\arraycolsep=1.4pt\def\arraystretch{1.3}}
\DeclareMathOperator{\dist}{dist}
\newcommand{\lpc}{\Delta}
\newcommand{\grad}{\nabla}
\title{\huge\textbf{Linear Stability of liquid Lane-Emden stars}}
\date{}
\begin{document}



\begin{center}
{\LARGE \textbf{Linear Stability of liquid Lane-Emden stars}}

\vspace{3mm}

King Ming Lam\footnote{Department of Mathematics, University College London, London WC1E 6XA, UK. Email: \href{mailto:king.lam.19@ucl.ac.uk}{king.lam.19@ucl.ac.uk}}
\end{center}

\vspace{7mm}

\begin{abstract}
\noindent
We establish various qualitative properties of liquid Lane-Emden stars in $\R^d$, including bounds for its density profile $\rho$ and radius $R$. Using them we prove that against radial perturbations, the liquid Lane-Emden stars are linearly stable when $\gamma\geq 2(d-1)/d$; linearly stable when $\gamma<2(d-1)/d$ for stars with small relative central density $\rho(0)-\rho(R)$; and linearly unstable when $\gamma<2(d-1)/d$ for stars with large central density. Such dependence on central density is not seen in the gaseous Lane-Emden stars.
\end{abstract}

\section{Introduction}

A classical model for stars is given by the \myb{Euler-Poisson system}, as a lump of compressible gas or liquid surrounded by vacuum, subjected to forces created by its own pressure and gravity. The fluid is modelled by the \myb{compressible Euler equations}, which model perfect fluids with no heat conduction and no viscosity. It is given by
\begin{align}
\rho{D\mb u\over Dt}&=-\grad p+\rho\mb g\label{momentum equation}\\
\partial_t\rho+\grad\cdot(\rho\mb u)&=0\label{continuity equation}
\end{align}
where $\rho$ is the fluid density, $\mb u$ the fluid velocity, $\mb g$ the body accelerations acting on the fluid (e.g.\ gravity, inertial accelerations, electric field acceleration etc.), $p$ the fluid pressure. All these quantities are a function of space $\mb x$ (in $\R^d$) and time $t$. Here ${D\over Dt}:=\partial_t+\mb u\cdot\grad$ denotes the material derivative. Equation (\ref{momentum equation}) is the so-called \myb{momentum equation} which describes the forces on the fluid particles and conservation of momentum. Equation (\ref{continuity equation}) is the so-called \myb{continuity equation} which describes the conservation of mass.

We take $\mb g$ to be the Newtonian gravity
\[\mb g=\grad\phi\]
where $\phi$ is defined by 
\begin{align*}
\lpc\phi=4\pi\rho\qquad\text{and}\qquad\lim_{\|\mb x\|\to\infty}\phi(t,\mb x)=0.
\end{align*}
Here the boundary condition at infinity $\lim_{\|\mb x\|\to\infty}\phi(t,\mb x)=0$ ensures that we have a unique solution for $\phi$, and furthermore we require this boundary condition to be zero since we want to model an isolated star where no energy is coming from infinity. This case we have describes a \myb{self-gravitating star}, subject to gravity induced by its own mass as well as force coming from its own pressure. 

The lump of liquid does not have a fixed shape, its shape and volume can change - the pressure of the liquid will try to force the liquid to occupy more space/volume and expend the domain, while any gravitational force will try to do the opposite. As a result we have a \myb{free surface problem} where the domain $\Omega(t)=\supp\rho$ changes with time, being deformed by the fluid motion itself. We have the following boundary conditions:
\begin{enumerate}[a)]
\item the pressure on the boundary matches that of the vacuum outside, i.e. $p=0$ on $\partial\Omega$;
\item the normal velocity at which the boundary changes is equal to $\mb u\cdot\mb n$ at any point on the boundary, where $\mb n$ denotes the outward unit normal vector to $\partial\Omega$.
\end{enumerate}

We now have $d+1$ equations for our fluid (the scalar continuity equation (\ref{continuity equation}) plus the vector momentum equation (\ref{momentum equation})), but $d+2$ unknowns ($\rho$, $p$ and $\mb u$). To close the system, we need to specify an \myb{equation of state}. The classical model which we will consider is that of a \myb{barotropic fluid} where density is a function of pressure only, wherein we have a equation of state $p=P(\rho)$ that relates the pressure to the density. Under this we shall consider the classical case of \myb{polytropic fluid} where $p$ is related to $\rho$ via
\begin{align}
p=\begin{cases}K\rho^\gamma&\text{ for gasous stars}\\K\rho^\gamma-C&\text{ for liquid stars.}\end{cases}\label{PES}
\end{align}
for some constant $K,C>0$ and $2\geq\gamma\geq1$. Note that under (\ref{PES}), as pressure is 0 on the boundary $\partial\Omega$, the density of the fluid will be $0$ on the boundary for the gas case, and $(C/K)^{1/\gamma}$ on the boundary for the liquid case. Without loss of generality, we can take $K=C=1$ as these constants can be normalised to be 1. The constant for the power $\gamma$, called the \myb{adiabatic index}, does makes a difference however as it defines the fundamental relationship between $p$ and $\rho$. With respect to the natural scaling of the system, the parameter $\gamma$ plays a crucial role as natural criticality parameter, with different behaviour to be expected from difference values of $\gamma$. We will prove result for the liquid case, but to do so we will also need to understand the gaseous case.

\subsection{History and Background}

The Euler equations were first described by Euler in 1757, but even in the absence of vacuum it is a highly non-trivial problem. In particular, the Euler equations is prone to singularity/shock formation even from quite regular data, see for example \cite{CHRISTODOULOU} by Christodoulou and \cite{SIDERIS} by Sideris. This made the study of existence and behaviour and global solutions non-trivial. In the presence of vacuum the problem is even trickier due to the degeneracy of the boundary and its movements. When the density is bounded away from zero, the system is strictly hyperbolic and the classical theory of hyperbolic systems for local existence applies: \cite{MAJDA}. In the presence of the vacuum, we have singularity or discontinuity across the boundary and this no longer applies, and so even local existence is non-trivial, with various local existence theory only establish recently.

The Euler-Poisson equations with the polytropic equation of state has long been used to model stellar structure and evolution by astrophysics such as Chandrasekhar \cite{Chandrasekhar}, Shapiro and Teukolsky \cite{Shapiro Teukolsky}. Despite and because of its simplicity, it is a good approximation for certain regions of various types of stars, and so it serves as the classical model of stars. In particular, many equations of state that are of astrophysical interest behave like polytropes for low and for high pressures. One of the most important class of solutions to the Euler-Poisson equation with the polytropic equation of state are the time independent steady state solutions known as the \myb{Lane–Emden stars} (for any $\gamma\in[1,2]$). It is the classical model of a spherically symmetric stationary steady star where pressure and gravity are in perfect balance. In 1980 astrophysicists Goldreich and Weber \cite{GOLDREICH WEBER} found a special class of expanding and collapsing solutions for $\gamma=4/3$, the mass critical index where the natural scaling preserves the mass. This is another important class of solutions as it models stellar collapse and expansion such as supernova expansion. Importantly, both the Lane–Emden stars and the Goldreich-Weber stars in the gaseous case has $w\sim\dist(\ph,\Omega^c)$ near the vacuum boundary. Therefore, given these special solutions of physical interest, it is important to have an existence theory for the Euler and Euler-Poisson equation that is compatible and includes this particular boundary behaviour.

Recently progress on local existence theory in this area has been made. In this formulation, one has a moving domain $\Omega(t)$, treating $\partial\Omega(t)$ as an unknown that evolves with the flow, prescribing the so-called \myb{physical boundary condition} $0<\mb n\cdot\grad w<\infty$ on the boundary where $\mb n$ is the outward normal of the boundary, i.e. $w\sim\dist(\ph,\Omega^c)$ near the boundary. For the Euler equations in the gaseous case in this setting, Coutand and Shkoller \cite{Coutand Shkoller}, and Jang and Masmoudi \cite{Jang Masmoudi} have recently independently proved local existence. A different proof based on Eulerian approach (rather than using Lagrangian coordinates as previous work) was later given by Ifrim and Tataru \cite{Ifrim Tataru}. Extensions of local existence result to the Euler-Poisson equations has been done by using the fact that the gravity term is of lower order - see \cite{Gu Lei} by Gu and Lei, and \cite{Hadzic Jang} by Had\v{z}i\'c and Jang. For the liquid case, local-existence for the Euler system is established by Lindblad \cite{Lindblad} using Nash–Moser construction; Trakhinin \cite{Trakhinin} using the theory of symmetric hyperbolic systems; and Coutand, Hole and Shkoller \cite{Coutand Hole Shkoller} using the vanishing viscosity method with a parabolic regularization together with some time-differentiated a priori estimates. Local existence for the liquid Euler-Poisson equation has been proven by Ginsberg, Lindblad and Luo \cite{Ginsberg Lindblad Luo}. In the relativistic setting, this has been done by Oliynyk \cite{Oliynyk} and Miao, Shahshahani and Wu \cite{Miao Shahshahani Wu}.

Since already the Euler equation alone is prone to singularity/shocks which made global-in-time behaviour non-trivial, when coupled with an additional attractive gravitational force, we expect the Euler-Poisson to be even more prone to singularity or blow ups. Indeed, it has been shown that solutions could blow up in a finite time - \cite{Deng Xiang Yang}\cite{Makino}. As a result there are not many global-in-time existence and uniqueness results for the Euler-Poisson system outside of the aforementioned special solutions. Due to their physical significance, it is important to understand what happens in the vicinity of these special solutions, whether they are linearly and non-linearly stable.

On the problem of stability of gaseous Lane–Emden stars it is known \cite{Lin}\cite{Jang Makino} that the Lane–Emden stars in $\R^3$ are linearly unstable when $1<\gamma<4/3$ and linearly stable when $4/3\leq\gamma<2$. Recently, nonlinear instability in the range $6/5\leq\gamma<4/3$ has been proven in \cite{Jang 1}\cite{Jang 2} by Jang. Nonlinear stability in the range $4/3<\gamma<2$ has not been fully proven yet and remains open, but under the assumption that a global-in-time solution exists, nonlinear stability in the range $4/3<\gamma<2$ has been shown by variation arguments, see \cite{Rein} by Rein, and \cite{Luo Smoller} by Luo and Smoller. In the critical case $\gamma=4/3$ the Lane-Emden star is nonlinearly unstable despite the conditional linear stability - in fact the family of expanding/collapsing Goldreich-Weber stars that exist for this $\gamma$ can get arbitrarily close to the Lane-Emden star. Nonlinear stability of the expanding gaseous Goldreich-Weber stars against radially symmetric perturbations was proven by Had\v{z}i\'c and Jang \cite{Hadzic Jang 2}, and recently generalised to non-radial perturbations by the author together with Had\v{z}i\'c and Jang in \cite{Hadzic Jang Lam}.

Making use of our linear (in)stability analysis in this work, very recently Hao and Miao have proven in \cite{Hao Miao} non-linear instability of liquid Lane–Emden stars in $\R^3$ with large central density in the regime $1\leq\gamma<4/3$.

\subsection{Statement of main result and plan for the paper}

Despite the linear stability result on the gaseous Lane–Emden stars, the question of linear stability of \emph{liquid} Lane–Emden stars has not been established to our knowledge. The linear stability of the gaseous Lane–Emden stars does not depend on its central density. However, notably, we found that in the liquid case, the stability of the Lane–Emden stars does depend on its central density.

\begin{definition}\label{def 1}
For a liquid Lane–Emden stars with density profile $\rho$, we say that it has \emph{small relative central density} if $\rho(0)-1$ is close to 0.
\end{definition}

In this paper we will prove the following linear stability result for the liquid Lane–Emden stars.

\begin{theorem}\label{liquid Lane–Emden stars stability}
Assume $d<10$. The liquid Lane–Emden stars are, against radial perturbations, 
\begin{enumerate}
\item linearly stable when $\gamma\geq 2(d-1)/d$;
\item linearly stable when $\gamma<2(d-1)/d$ for stars with small relative central density (see Definition \ref{def 1});
\item linearly unstable when $\gamma<2(d-1)/d$ for stars with large central density.
\end{enumerate}
\end{theorem}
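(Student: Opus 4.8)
The plan is to reduce the question to the sign of a single quadratic form and then analyse that form on radial displacements. Writing the perturbation as a radial Lagrangian displacement $\xi = \xi(r)\mb e_r$ with $r = \norm{\mb x}$, the linearised Euler--Poisson flow takes the form $\rho\,\partial_t^2\xi = -\mathcal L\xi$ for an operator $\mathcal L$ that is self-adjoint on the weighted space $L^2(\rho\,\d x)$, so that linear stability is equivalent to positivity of the associated second-variation form
\[
Q(\xi) = \int_0^R \gamma p\,\brac{\tfrac{1}{r^{d-1}}\partial_r(r^{d-1}\xi)}^2 r^{d-1}\,\d r \;+\; (\text{gravitational and surface terms})
\]
over all admissible radial $\xi$, where $\delta\rho = -\div(\rho\xi)$, the gravitational contribution is built from $\delta\rho\,\partial_r\phi$ and $\rho\,\partial_r\delta\phi$, and $\lpc\delta\phi = 4\pi\delta\rho$. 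First I would carry out this reduction carefully, paying attention to the surface contribution produced by integrating $\int\xi\,\partial_r\delta p$ by parts: unlike the gaseous case, the liquid has $\rho = 1 \neq 0$ at $\partial\Omega$ while $p=0$, so this boundary term does not simply vanish and must be tracked as a genuine part of $Q$.

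Next, using the hydrostatic balance $\partial_r p = -\rho\,\partial_r\phi$ together with the density and radius bounds for the profile established earlier in the paper, I would rewrite $Q$ so that the destabilising gravitational piece is pointwise dominated by the stabilising term $\gamma p\,(\div\xi)^2$. For part (i) the target is a pointwise inequality, valid precisely when $\gamma \geq 2(d-1)/d$, that after completing the square (or a Hardy-type estimate bounding $\xi/r$ by $\div\xi$) yields $Q(\xi)\geq 0$ for every admissible $\xi$. The critical exponent $2(d-1)/d$ is exactly the threshold at which the homologous scaling $\xi \propto r$ has vanishing second-variation energy: for that mode $\div\xi$ is constant, and a virial computation gives a second variation proportional to $d\gamma - 2(d-1)$.

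For parts (ii) and (iii) I would test $Q$ against the homologous displacement $\xi \propto r$, suitably corrected near the boundary so as to respect the constraint that the density at the free surface stays pinned at $1$. On this family the sign of $Q$ is governed by a virial-type quantity whose leading part is $d\gamma - 2(d-1) < 0$ but which receives a correction controlled by the relative central density $\rho(0)-1$. When $\rho(0)-1$ is large the star is strongly and centrally condensed, the negative gravitational contribution dominates, and $Q(\xi) < 0$, giving linear instability (part (iii)). When $\rho(0)-1$ is small the configuration is nearly incompressible and, by the profile bounds, of small radius and hence weakly self-gravitating; both $\gamma p$ and the destabilising term are then small, and a continuity/perturbative argument off the weakly self-gravitating limit forces $Q \geq 0$, yielding stability even in the subcritical range (part (ii)).

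The main obstacle I expect is part (i): establishing $Q(\xi)\geq 0$ for \emph{all} radial $\xi$, not merely the homologous mode, requires uniform control of the ratio between the gravitational term and $\gamma p\,(\div\xi)^2$ across the whole star, and this is exactly where the quantitative bounds for $\rho$ and $R$ become indispensable. A secondary difficulty, absent in the gaseous problem, is the liquid surface term: one must show it has a favourable sign or can be absorbed into the stabilising term. It is precisely this surface contribution, together with the constant $C=1$ in $p = \rho^\gamma - 1$, that makes the stability depend on the central density in a way the gaseous Lane--Emden stars do not.
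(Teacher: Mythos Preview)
Your variational framework is right, and the quadratic form you set up is essentially the paper's $\<L\chi,\chi\>$ in \eqref{TE}. But you have misjudged where the difficulty sits, and this leads to a genuine gap in part (iii).

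Part (i) is trivial, not the main obstacle. In \eqref{TE} the three terms are $\gamma\bar\rho^\gamma y^{d+1}(\partial_y\chi)^2\geq 0$, $(2(d-1)-d\gamma)y^d\chi^2\partial_y\bar\rho^\gamma$, and the boundary term $d\gamma R^d\chi(R)^2\geq 0$. Since $\partial_y\bar\rho^\gamma<0$, the middle term is nonnegative as soon as $2(d-1)-d\gamma\leq 0$. No Hardy inequality, no completing the square, no profile bounds. (Note also that the coefficient in the first term is $\gamma\bar\rho^\gamma$, not $\gamma p=\gamma(\bar\rho^\gamma-1)$; since $\bar\rho\geq 1$ this coefficient is bounded \emph{below}, which matters for part (ii).)

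For part (ii) your ``continuity off the weakly self-gravitating limit'' is the right intuition, but saying ``both $\gamma p$ and the destabilising term are small'' is not an argument: the stabilising first term has coefficient $\gamma\bar\rho^\gamma\geq\gamma$, not $\gamma p$, and to win you must show the destabilising middle term is controlled by the other two \emph{for every} $\chi$. The paper does this via a Poincar\'e--Hardy inequality $\int_0^1 z^{d+1}\tilde\chi^2\,dz\lesssim \int_0^1 z^{d+1}(\partial_z\tilde\chi)^2\,dz+\tilde\chi(1)^2$ after rescaling to the unit interval; the small parameter $R^2\bar\rho(0)^2$ then multiplies the destabilising integral and can be absorbed.

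The real gap is part (iii). Testing with the homologous mode $\chi\equiv 1$ works only when $\gamma>2d/(d+2)$ (and, with a rescaled version, at $\gamma=2d/(d+2)$), because then the destabilising integral blows up as $\kappa\to\infty$. When $\gamma<2d/(d+2)$ the gaseous profile is infinitely supported and, by the dynamical-systems analysis of Section~2, $\bar\rho_\kappa(y)\to v_1^*\,y^{-2/(2-\gamma)}$ and $R_\kappa\to R_\infty<\infty$; the constant test function then yields a \emph{finite} limit whose sign is not obviously negative. The paper instead uses the singular-limit profile explicitly and tests with the power law $\chi(y)=\epsilon^{-a}\wedge y^{-a}$, $a=\tfrac12(d-2\gamma/(2-\gamma))$, chosen so the integrand becomes $y^{-1}$ and the integral diverges logarithmically as $\epsilon\to 0$. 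Negativity of the prefactor $a^2-2(d-2/(2-\gamma))$ is then exactly the condition $a^2-2a-(d-2)<0$, which holds for $a\in(0,4)$ and hence for $d<10$. Your proposal does not access this mechanism, so it cannot explain where the hypothesis $d<10$ enters and will not close in the range $\gamma<2d/(d+2)$.
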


 In this sense it is analogous to relativistic stars described by the Einstein–Euler equations, where it was found in \cite{Hadzic Lin Rein} by Had\v{z}i\'c, Lin and Rein that strongly relativistic steady stars (those with very large central density) are unstable. It turns out the imposition of a liquid boundary creates extrema points on the mass-radius plot when $\gamma<2(d-1)/d$ so that they end up similar to those in the relativistic case, where a turning point principle proven by Had\v{z}i\'c, Lin and Rein \cite{Hadzic Lin Rein}\cite{Hadzic Lin} dictates that such extrema points introduces unstable modes (see diagram). The two problems share similarity in such a way that the methods of dynamical system utilised in \cite{Hadzic Lin Rein} can be adapted for use in our study of liquid Lane–Emden stars - we use that to prove part iii of our theorem.

In the section 2 of the paper, we will establish various qualitative properties of the liquid Lane–Emden stars, which are steady states solutions of the Euler-Poisson system, including bounds for its density profile and radius. And using these results, in section 3 we will prove theorem \ref{liquid Lane–Emden stars stability}, where we will prove the existence and non-existence of unstable modes using the variational formulation.

\section{Liquid Lane–Emden stars and its properties}

The Lane–Emden stars are steady states solutions of the following Euler-Poisson system
\begin{align}
\rho{D\mb u\over Dt}+\grad p+\rho\grad\phi&=\mb 0\label{momentum equation 2}\\
\partial_t\rho+\grad\cdot(\rho\mb u)&=0\label{continuity equation 2}
\end{align}
where
\begin{align*}
\lpc\phi=4\pi\rho\qquad\text{ with }\qquad\lim_{\|\mb x\|\to\infty}\phi(t,\mb x)=0
\end{align*}
and $p$ is either
\[p=\begin{cases}\rho^\gamma&\text{ for gasous stars}\\\rho^\gamma-1&\text{ for liquid stars}\end{cases}\]
with boundary condition
\[\mathsarray
\begin{array}{rl}
p|_{\partial\Omega(t)}&=0\\
\grad w\cdot\mb n|_{\partial\Omega(0)}&<0
\end{array}\qquad\qquad\text{ where }\qquad\qquad\begin{array}{rl}
\Omega(t)&=\supp\rho(t)\subseteq\R^d,\qquad d\geq 3\\
w&=\rho^{\gamma-1}
\end{array}\]
and $\mb n$ is the outward normal of $\partial\Omega(t)$.

Seeking steady states reduces the system to
\begin{align}
\grad p+\rho\grad\phi=\mb 0\qquad\text {with boundary conditions}\qquad p|_{\partial\Omega}=0\qquad\text{ and }\qquad\grad w\cdot\mb n|_{\partial\Omega}<0\label{steady states equation}
\end{align}
where all the variables are independence from $t$. We will refer to the equation $\grad p+\rho\grad\phi=\mb 0$ as the \myb{steady state equation}.

We will mainly study the liquid stars, but to do so will will need to study properties of gaseous stars.

\subsection{The spherically symmetric steady state equation as an ODE}

The ODE that defines the Lane–Emden stars, which are spherically symmetric steady states, is given by the following proposition.

\begin{proposition}
The density $\rho$ for the Lane–Emden stars satisfies:
\begin{enumerate}
\item when $\gamma>1$,
\begin{align}
{1\over r^{d-1}}{d\over dr}\brac{r^{d-1}{dw\over dr}}=-4\pi{\gamma-1\over\gamma}w^\alpha.\label{enthalpy equation}
\end{align}
or equivalently
\begin{align}
{dw\over dr}=-4\pi{\gamma-1\over\gamma}{1\over r^{d-1}}\int_0^ry^{d-1}w(y)^\alpha dy\label{*}
\end{align}
where $w=\rho^{\gamma-1}$.
\item when $\gamma=1$,
\begin{align}
{1\over r^{d-1}}{d\over dr}\brac{r^{d-1}{dh\over dr}}=-4\pi e^h.\label{$h$-equation}
\end{align}
or equivalently
\begin{align}
{dh\over dr}=-4\pi{1\over r^{d-1}}\int_0^ry^{d-1}e^{h(y)}dy\label{**}
\end{align}
where $h=\ln\rho$.
\end{enumerate}
\end{proposition}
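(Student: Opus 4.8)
The plan is to restrict to the support $\Omega$, which by spherical symmetry is a ball, and on which $\rho>0$ so that we may divide the steady state equation by $\rho$ to obtain $\grad p/\rho+\grad\phi=\mb 0$. The first and essential observation is that the pressure term is an exact gradient. Since $p=\rho^\gamma$ and $p=\rho^\gamma-1$ differ only by an additive constant, they have identical gradients, so the derivation is the same for gaseous and liquid stars. For $\gamma>1$ a one-line computation gives $\grad p/\rho=\gamma\rho^{\gamma-2}\grad\rho=\tfrac{\gamma}{\gamma-1}\grad\brac{\rho^{\gamma-1}}=\tfrac{\gamma}{\gamma-1}\grad w$, turning the steady state equation into $\tfrac{\gamma}{\gamma-1}\grad w+\grad\phi=\mb 0$; for $\gamma=1$ one instead finds $\grad p/\rho=\grad\rho/\rho=\grad(\ln\rho)=\grad h$, giving $\grad h+\grad\phi=\mb 0$. (Up to the constant factor, $w$ is the specific enthalpy, which motivates the name of (\ref{enthalpy equation}).)

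Next I would take the divergence of each vector identity and insert the Poisson equation $\lpc\phi=4\pi\rho$. In the case $\gamma>1$ this gives $\tfrac{\gamma}{\gamma-1}\lpc w=-4\pi\rho$, and substituting $\rho=w^{1/(\gamma-1)}=w^\alpha$ yields $\lpc w=-4\pi\tfrac{\gamma-1}{\gamma}w^\alpha$. Writing the Laplacian of a radial function on $\R^d$ in its standard form $\lpc w=\tfrac{1}{r^{d-1}}\tfrac{d}{dr}\brac{r^{d-1}\tfrac{dw}{dr}}$ then produces (\ref{enthalpy equation}) verbatim. The $\gamma=1$ case runs identically: $\lpc h=-4\pi\rho=-4\pi e^h$ gives the stated $h$-equation.

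For the integral forms I would multiply (\ref{enthalpy equation}) by $r^{d-1}$, integrate from $0$ to $r$, and divide by $r^{d-1}$ to arrive at (\ref{*}), proceeding analogously for $h$. The only step requiring genuine care is the vanishing of the boundary contribution $r^{d-1}w'(r)$ at the lower limit: this relies on the regularity and spherical symmetry of the profile at the centre, which force $w'(0)=0$, together with $d\geq 3$, so that $\lim_{r\to 0^+}r^{d-1}w'(r)=0$. Everything else is algebraic rearrangement, so I expect this boundary-term justification to be the only real (and minor) obstacle.
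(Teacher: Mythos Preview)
Your proposal is correct and follows essentially the same route as the paper: divide the steady state equation by $\rho$ to recognise $\grad p/\rho$ as an exact gradient ($\tfrac{\gamma}{\gamma-1}\grad w$ or $\grad h$), take the divergence, insert $\lpc\phi=4\pi\rho$, and write the radial Laplacian. Your remarks that the additive constant in the liquid pressure disappears under $\grad$ and that the boundary term $r^{d-1}w'(r)\to 0$ at the origin justifies the integral form are correct refinements that the paper leaves implicit.
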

\begin{proof}
\begin{enumerate}
\item Assuming $\gamma>1$, we can divide the steady state equation (\ref{steady states equation})
\[\mb 0=\grad p+\rho\grad\phi=\gamma\rho^{\gamma-1}\grad\rho+\rho\grad\phi\]
by $\rho$ and rewrite it as
\[\mb 0=\gamma\rho^{\gamma-2}\grad\rho+\grad\phi={\gamma\over\gamma-1}\grad\rho^{\gamma-1}+\grad\phi.\]
Apply $\grad\cdot$ to both sides we get
\[\mb 0={\gamma\over\gamma-1}\lpc\rho^{\gamma-1}+4\pi\rho={\gamma\over\gamma-1}\lpc w+4\pi w^{\alpha}\qquad\text{where}\qquad\alpha=(\gamma-1)^{-1}.\]

We seek radially symmetric solutions depending only on $r=\|\mb x\|$, then using
\[\mathsarray
\begin{array}{rl}
\grad r^\alpha&=\alpha r^{\alpha-1}\hat{\mb r}\\
\grad\cdot(r^\alpha\hat{\mb r})&=(\alpha-1+d)r^{\alpha-1}
\end{array}\qquad\qquad\text{ where }\qquad\qquad
\hat{\mb r}=\mb x/\|\mb x\|
\]
we deduced that
\begin{align*}
\lpc w(r)&=\grad\cdot(w'(r)\grad r)=w''(r)(\grad r)\cdot(\grad r)+w'(r)\lpc r\\
&=w''(r)+(d-1)r^{-1}w'(r)={1\over r^{d-1}}{d\over dr}\brac{r^{d-1}{dw\over dr}}.
\end{align*}
So we want to seek solutions of
\[{1\over r^{d-1}}{d\over dr}\brac{r^{d-1}{dw\over dr}}=-4\pi{\gamma-1\over\gamma}w^\alpha.\]

\item The steady state equation is now $\grad\rho+\rho\grad\phi=\mb 0$, or equivalently $\grad(\ln\rho)+\grad\phi=\mb 0$. Taking divergence on both sides we get
\[0=\lpc(\ln\rho)+4\pi\rho={\rho''\over\rho}-{(\rho')^2\over\rho^2}+(d-1){1\over r}{\rho'\over \rho}+4\pi\rho.\]
Let $h=\ln\rho$, then the equation becomes
\[\lpc h+4\pi e^h=0.\]
This almost the same equation as the case when $\gamma>1$ with variable $h$ in place of $w$, except the non-linear term is $4\pi e^h$ instead of $4\pi(\gamma-1)\gamma^{-1}w^\alpha$. So instead of $(*)$ we have
\[{dh\over dr}=-4\pi{1\over r^{d-1}}\int_0^ry^{d-1}e^{h(y)}dy.\qed\]
\end{enumerate}
\end{proof}

\subsection{Basic properties of Lane–Emden stars}

From now on we will refer to spherically symmetry steady states as simply steady states, and all our functions will only depend on the one variable $r$.

\begin{theorem}
Let $\gamma\geq 1$. For every $\rho_0>0$, the steady state equation admits a unique solution $\rho\geq 0$ such that $\rho(0)=\rho_0$. The interval of existence $[0,R)$ is such that either $R=\infty$ or $\lim_{r\to R}\rho(r)=0$.
\end{theorem}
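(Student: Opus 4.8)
The plan is to convert the problem into a fixed-point statement for an integral equation and apply the contraction mapping principle on a short interval near $r=0$, then extend the solution by the standard continuation principle for ODEs. The key observation is that equations (\ref{*}) and (\ref{**}) already package the second-order ODE into a first-order integral form that automatically encodes the regularity boundary condition $w'(0)=0$ (equivalently $\rho'(0)=0$) forced by smoothness at the origin. Treating the $\gamma>1$ case first, I would integrate (\ref{*}) once more to obtain the integral equation
\begin{align*}
w(r)=w(0)-4\pi{\gamma-1\over\gamma}\int_0^r{1\over s^{d-1}}\int_0^s y^{d-1}w(y)^\alpha\,dy\,ds,
\end{align*}
with $w(0)=\rho_0^{\gamma-1}$, and define the operator $T$ sending a candidate $w$ to the right-hand side. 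On the space $C([0,\delta])$ with the sup norm, restricted to the closed ball around the constant $w(0)$, I would show $T$ is a contraction for $\delta$ small: the inner integral contributes a factor $s^d/d$, so the outer integral is $O(r^2)$, giving a smalling factor that beats the Lipschitz constant of $w\mapsto w^\alpha$ on the ball (here $w$ stays bounded away from the singular region since it is near the positive constant $w(0)$). This yields a unique local continuous solution, which is then automatically $C^2$ by bootstrapping through the integral formula, and it solves (\ref{enthalpy equation}).

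For the $\gamma=1$ case I would run the identical argument with $h=\ln\rho$ in place of $w$ and nonlinearity $e^h$ in place of $w^\alpha$, using equation (\ref{**}); the exponential is locally Lipschitz so the same contraction estimate goes through verbatim. In both cases uniqueness on the initial interval follows from the contraction, and uniqueness propagates forward: on any interval where the solution stays positive and bounded the ODE (\ref{enthalpy equation}) or (\ref{$h$-equation}) has locally Lipschitz coefficients (away from $r=0$ it is a regular second-order ODE), so standard Picard–Lindelöf uniqueness applies and the local solutions patch together into a single maximal solution on some $[0,R)$.

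For the continuation alternative, I would let $[0,R)$ be the maximal interval of existence and argue by the standard escape-of-compact-sets principle: if $R<\infty$, then as $r\to R$ the solution must leave every compact subset of the region where the ODE is regular. The only way this can happen for the enthalpy is that $w\to 0$ (equivalently $\rho\to 0$), since $w$ is decreasing — from (\ref{*}) the right-hand side is negative for $r>0$, so $w$ is monotonically nonincreasing, hence bounded above by $w(0)$ and bounded below by $0$, ruling out blow-up to $+\infty$ or oscillation. Thus the only finite-$R$ termination is $\lim_{r\to R}w(r)=0$, i.e. $\lim_{r\to R}\rho(r)=0$. The main obstacle I expect is the coordinate singularity of the Laplacian at $r=0$: the factor $r^{-(d-1)}$ in (\ref{*}) and (\ref{**}) is why one cannot invoke off-the-shelf Picard–Lindelöf at the origin, and care is needed to verify that the integral operator $T$ genuinely maps continuous functions to continuous functions across $r=0$ and that the $O(r^2)$ smallness estimate is uniform — once that is handled, the monotonicity argument closing the dichotomy is comparatively routine.
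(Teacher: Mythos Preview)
Your proposal is correct and follows essentially the same approach as the paper: the paper also rewrites the problem as the double-integral fixed-point equation for $w$ (respectively $h$), runs a contraction mapping argument on $C([0,\delta],[\epsilon,w_0])$ using the $O(\delta^2)$ smallness of the iterated integral, and then continues the solution as long as $w$ stays positive, concluding with the same monotonicity observation from (\ref{*}) to obtain the dichotomy. The only cosmetic difference is that the paper carries out the continuation step by explicitly writing down a second contraction operator $H$ centred at $r_0>0$, whereas you invoke the standard escape-from-compact-sets principle for regular ODEs away from the origin; these are equivalent.
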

\begin{proof}
This is a know standard result, we included the proof in the appendix for completeness.\qed
\end{proof}

In fact, due to the following decay estimate, we must have $\lim_{r\to R}\rho(r)=0$ even if $R=\infty$. This is a elementary decay estimate that we will be using in many places later on.

\begin{lemma}[Decay estimates]
Suppose $\rho$ is a solution to the enthalpy equation on $[0,R)$ with $\rho(0)=\rho_0$. Then for $r\in[0,R)$ we have
\[\rho(r)\leq\begin{cases}\displaystyle\brac{\rho_0^{-(2-\gamma)}+{2\pi\over d}{2-\gamma\over\gamma}r^2}^{-{1\over 2-\gamma}}&\qquad\text{when}\qquad\gamma\not=2\\\displaystyle\exp\brac{\ln(\rho_0)-{2\pi\over d}{1\over\gamma}r^2}&\qquad\text{when}\qquad\gamma=2\end{cases}.\]
Equivalently, we have when $\gamma>1$
\[w(r)\leq\begin{cases}\displaystyle\brac{w_0^{-(\alpha-1)}+{2\pi\over d}{2-\gamma\over\gamma}r^2}^{-{1\over\alpha-1}}&\qquad\text{when}\qquad\alpha\not=1\\\displaystyle\exp\brac{\ln(w_0)-{2\pi\over d}{\gamma-1\over\gamma}r^2}&\qquad\text{when}\qquad\alpha=1\end{cases}\]
and when $\gamma=1$,
\[h(r)\leq-\ln\brac{e^{-h_0}+{2\pi\over d}r^2}.\]
\end{lemma}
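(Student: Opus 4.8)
The plan is to convert the integral form \eqref{*} of the enthalpy equation into a pointwise separable differential inequality and then integrate it. The starting observation is that the preceding existence result gives $\rho\ge 0$, hence $w=\rho^{\gamma-1}\ge 0$, so the integrand in \eqref{*} is nonnegative; therefore $w'(r)\le 0$ and $w$ is nonincreasing on $[0,R)$. This monotonicity is the crucial input: for $0\le y\le r$ we have $w(y)\ge w(r)$, so $w(y)^\alpha\ge w(r)^\alpha$, and pulling $w(r)^\alpha$ out of the integral in \eqref{*} together with $\int_0^r y^{d-1}\,dy=r^d/d$ yields the clean differential inequality $w'(r)\le -\tfrac{4\pi}{d}\tfrac{\gamma-1}{\gamma}\,r\,w(r)^\alpha$.

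Next I would solve this inequality by separation of variables, treating $\alpha\ne 1$ and $\alpha=1$ separately. For $\alpha\ne 1$ the natural move is to differentiate $w^{1-\alpha}$: since $1-\alpha<0$ (as $1<\gamma<2$ forces $\alpha>1$) and $w>0$, multiplying the inequality by the negative factor $(1-\alpha)w^{-\alpha}$ reverses it and gives $\tfrac{d}{dr}w^{1-\alpha}\ge (\alpha-1)\tfrac{4\pi}{d}\tfrac{\gamma-1}{\gamma}\,r$. Integrating from $0$ to $r$ and simplifying the constant $(\alpha-1)\tfrac{4\pi}{d}\tfrac{\gamma-1}{\gamma}\tfrac12=\tfrac{2\pi}{d}\tfrac{2-\gamma}{\gamma}$ gives a lower bound on $w(r)^{1-\alpha}$; raising to the (negative) power $1/(1-\alpha)$ flips the inequality back and produces the stated bound on $w$. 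For $\alpha=1$ (i.e.\ $\gamma=2$) I would instead divide by $w$ to get $(\ln w)'\le -\tfrac{2\pi}{d}r$ and integrate. Converting to $\rho$ is then bookkeeping: using $\rho=w^\alpha$ and the identities $(\gamma-1)(\alpha-1)=2-\gamma$ and $\alpha/(\alpha-1)=1/(2-\gamma)$ turns the $w$-bound into the claimed $\rho$-bound, and likewise for the exponential $\gamma=2$ case.

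For $\gamma=1$ the same scheme runs on \eqref{**}: monotonicity of $h=\ln\rho$ gives $h'(r)\le-\tfrac{4\pi}{d}\,r\,e^{h(r)}$, and now the linearizing substitution is $e^{-h}$, whose derivative satisfies $\tfrac{d}{dr}e^{-h}\ge\tfrac{4\pi}{d}\,r$; integrating and applying the decreasing map $-\ln(\cdot)$ yields $h(r)\le-\ln\!\big(e^{-h_0}+\tfrac{2\pi}{d}r^2\big)$.

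The computation itself is routine; the one point that needs genuine care is the bookkeeping of inequality directions. Because $1-\alpha<0$, both multiplying by $(1-\alpha)w^{-\alpha}$ and raising to the power $1/(1-\alpha)$ reverse the inequality, and the map $x\mapsto-\ln x$ used in the $\gamma=1$ case is likewise order-reversing; arranging these two reversals to compose correctly, so that the final estimate is an upper bound on $\rho$ rather than a lower one, is the only place an error could creep in. I would also flag that the monotonicity step tacitly requires $w>0$ on the interior, which holds since $w$ decreases from $w_0>0$ and can vanish only in the limit $r\to R$.
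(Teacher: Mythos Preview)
Your proof is correct and follows essentially the same approach as the paper: both derive the differential inequality $w'(r)\le -\tfrac{4\pi}{d}\tfrac{\gamma-1}{\gamma}\,r\,w(r)^\alpha$ from the monotonicity of $w$ and then integrate by separation of variables, with the only cosmetic difference being that the paper writes the integration as $\int_{w(r)}^{w_0} z^{-\alpha}\,dz$ while you phrase it as differentiating $w^{1-\alpha}$. Your careful tracking of the inequality reversals and the constant identity $(\alpha-1)\tfrac{\gamma-1}{\gamma}=\tfrac{2-\gamma}{\gamma}$ is exactly what the paper uses implicitly.
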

\begin{proof}

\mybb{Case 1: $\gamma>1$}

From (\ref{*}) we have
\begin{align*}
w'(r)&=-4\pi{\gamma-1\over\gamma}{1\over r^{d-1}}\int_0^ry^{d-1}w(y)^\alpha dy\\
&\leq-4\pi{\gamma-1\over\gamma}{1\over r^{d-1}}w(r)^\alpha\int_0^ry^{d-1}dy=-{4\pi\over d}{\gamma-1\over\gamma}w(r)^\alpha r
\end{align*}
where we have the first inequality because $w$ is decreasing. Rearranging and integrating we get
\begin{align*}
{2\pi\over d}{\gamma-1\over\gamma}r^2&\leq-\int_0^r{w'(y)\over w(y)^\alpha}dy=\int_{w(r)}^{w_0}{1\over z^\alpha}dz\\
&=\begin{cases}\displaystyle{1\over\alpha-1}\brac{{1\over w(r)^{\alpha-1}}-{1\over w_0^{\alpha-1}}}&\qquad\text{when}\qquad\alpha\not=1\\\ln w_0-\ln w(r)&\qquad\text{when}\qquad\alpha=1\end{cases}.
\end{align*}
Rearranging gives the desired result.

\mybb{Case 2: $\gamma=1$}

From (\ref{**}) we have
\begin{align*}
h'(r)&=-4\pi{1\over r^{d-1}}\int_0^ry^{d-1}e^{h(y)}dy
\leq-4\pi{1\over r^{d-1}}e^{h(r)}\int_0^ry^{d-1}dy=-{4\pi\over d}e^{h(r)}r
\end{align*}
where we have the first inequality because $w$ is decreasing. Rearranging and integrating we get
\begin{align*}
{2\pi\over d}r^2&\leq-\int_0^rh'(y)e^{-h(y)}dy=e^{-h(r)}-e^{-h_0}.
\end{align*}
So
\[-h(r)\geq\ln\brac{e^{-h_0}+{2\pi\over d}r^2}.\qed\]
\end{proof}



In particular if $\rho_0>1$, then $\rho$ would reach 1 at some $r=r^*<\infty$ with $w'(r^*)<0$ (by (\ref{*})) or $h'(r^*)<0$ (by (\ref{**})). We can cut off the solution at this point to obtain a state state for fluid stars.


Note that if we consider gas stars, i.e. $p=\rho^\gamma$, then whether $R=\infty$ or not correspond to whether the star is compactly supported or not.

Following is the so-called Pohozaev integral for the Lane–Emden stars, more generally considered in \cite{Heinzle}.

\begin{proposition}[Pohozaev integral]
Suppose $\rho$ is a solution to the steady state equation on $[0,R)$. Then for $r\in[0,R)$ we have
\begin{align*}
&2\pi\brac{2-{2-\gamma\over\gamma}d}\int_0^r\rho(y)^\gamma y^{d-1}dy\\
&={1\over 2}\gamma(\gamma-1)\brac{\rho(r)^{-(2-\gamma)}\rho'(r)}^2r^d+4\pi{\gamma-1\over\gamma}\rho(r)^{\gamma}r^d+{1\over 2}(d-2)\gamma\rho(r)^{2\gamma-3}\rho'(r)r^{d-1}.
\end{align*}
Equivalently, we have for $\gamma>1$,
\begin{align*}
&2\pi{\gamma-1\over\gamma}\brac{{2d\over 1+\alpha}-(d-2)}\int_0^rw(y)^{\alpha+1}y^{d-1}dy\\
&={1\over 2}w'(r)^2r^d+4\pi\brac{\gamma-1\over\gamma}^2w(r)^{\alpha+1}r^d+{1\over 2}(d-2)w'(r)w(r)r^{d-1}.
\end{align*}
and for $\gamma=1$,
\[-4\pi\int_0^re^{h(y)}y^{d-1}dy=h'(r)r^{d-1}.\]
\end{proposition}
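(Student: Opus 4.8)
The plan is to treat the two cases separately, since for $\gamma=1$ the identity degenerates. For $\gamma=1$ there is in fact nothing to do beyond rewriting: multiplying the integrated equation (\ref{**}) by $r^{d-1}$ gives exactly $h'(r)r^{d-1}=-4\pi\int_0^r e^{h(y)}y^{d-1}\,dy$, so the $\gamma=1$ statement is (\ref{**}) itself.

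For $\gamma>1$ I would work entirely in the $w=\rho^{\gamma-1}$ formulation, where the equation is semilinear, and recover the $\rho$-version only at the very end. The cleanest route is to \emph{guess the antiderivative and verify}: let $F(r)$ denote the right-hand side of the $w$-identity, i.e.
\[F(r)=\tfrac12 w'(r)^2 r^d+4\pi\brac{\tfrac{\gamma-1}{\gamma}}^2 w(r)^{\alpha+1}r^d+\tfrac12(d-2)w'(r)w(r)r^{d-1}.\]
I would differentiate $F$, and wherever a $w''$ appears substitute $w''=-\tfrac{d-1}{r}w'-4\pi\tfrac{\gamma-1}{\gamma}w^\alpha$ from the enthalpy equation (\ref{enthalpy equation}). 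The claim is that after this substitution the $w'^2 r^{d-1}$ terms, the $w^\alpha w' r^d$ terms, and the $w w' r^{d-2}$ terms all cancel, leaving only $F'(r)=2\pi\tfrac{\gamma-1}{\gamma}\brac{\tfrac{2d}{1+\alpha}-(d-2)}w^{\alpha+1}r^{d-1}$, which is precisely the integrand on the left. Integrating from $0$ to $r$, and noting $F(0)=0$ because every term carries a factor $r^{d-1}$ or $r^d$ while $w,w'$ stay bounded at the origin, then yields the identity.

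The one algebraic fact that makes everything collapse is $1+\alpha=\gamma/(\gamma-1)$, equivalently $\tfrac{1}{1+\alpha}=\tfrac{\gamma-1}{\gamma}$: this is what turns $4\pi(\tfrac{\gamma-1}{\gamma})^2(\alpha+1)$ into $4\pi\tfrac{\gamma-1}{\gamma}$ so that the $w^\alpha w' r^d$ terms cancel, and what converts $\tfrac{2d(\gamma-1)}{\gamma}$ into $\tfrac{2d}{1+\alpha}$ in the surviving coefficient. Conceptually this $F$ is not pulled out of thin air: it is the boundary expression produced by testing the equation against the dilation/Pohozaev multiplier $r^d w'$ and then using the Nehari relation $\int_0^r y^{d-1}(w')^2=r^{d-1}ww'+4\pi\tfrac{\gamma-1}{\gamma}\int_0^r y^{d-1}w^{\alpha+1}$, obtained by testing against $w$, to eliminate the energy integral $\int_0^r y^{d-1}(w')^2$; one could equally present the derivation that way.

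Finally I would pass to the $\rho$-version by substituting $w=\rho^{\gamma-1}$, $w'=(\gamma-1)\rho^{\gamma-2}\rho'$, and $w^{\alpha+1}=\rho^\gamma$. A term-by-term comparison shows the $w$-identity is exactly $\tfrac{\gamma-1}{\gamma}$ times the $\rho$-identity, so multiplying through by $\tfrac{\gamma}{\gamma-1}$ gives the first displayed form. I do not expect any genuine obstacle here: the only analytic input is the vanishing of the boundary terms at $r=0$, which is immediate from the regularity at the origin (indeed $w'(r)=O(r)$ from (\ref{*})), and the rest is coefficient bookkeeping that hinges on the single identity $1+\alpha=\gamma/(\gamma-1)$.
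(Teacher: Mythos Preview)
Your proposal is correct and essentially matches the paper's argument: for $\gamma=1$ the paper also just integrates the $h$-equation against $r^{d-1}\,dr$, and for $\gamma>1$ the paper tests the enthalpy equation against $w$ and against $rw'$, then eliminates $\int_0^r (w')^2 y^{d-1}\,dy$ between the two identities, which is exactly the multiplier derivation you describe as the alternative route. Your primary ``differentiate $F$ and substitute for $w''$'' presentation is simply the same computation run in reverse, and the conversion to the $\rho$-form and the vanishing of $F(0)$ are handled the same way.
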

\begin{proof}

\mybb{Case 1: $\gamma>1$}

Recall $w$ satisfies the enthalpy equation
\[-4\pi{\gamma-1\over\gamma}w(r)^\alpha={1\over r^{d-1}}{d\over dr}\brac{r^{d-1}{dw\over dr}}=w''(r)+{d-1\over r}w'(r)\qquad\text{for}\qquad r\in[0,R).\]
Times the enthalpy equation by $w$ and integrating w.r.t. $r^{d-1}dr$ we have
\begin{align}
-4\pi{\gamma-1\over\gamma}\int_0^rw^{\alpha+1}y^{d-1}dy
&=\int_0^rw''wy^{d-1}dy+(d-1)\int_0^rw'wy^{d-2}dy\nonumber\\
&=w'(r)w(r)r^{d-1}-\int_0^r(w')^2y^{d-1}dy\label{*_1}
\end{align}
Times the enthalpy equation by $rw'$ and integrating w.r.t. $r^{d-1}dr$ we have
\begin{align}
-{4\pi\over 1+\alpha}{\gamma-1\over\gamma}\brac{w(r)^{\alpha+1}r^d-d\int_0^rw^{\alpha+1}y^{d-1}dy}
&=-4\pi{\gamma-1\over\gamma}\int_0^rw^{\alpha}w'y^{d}dy\nonumber\\
&=\int_0^rw''w'y^{d}dy+(d-1)\int_0^r(w')^2y^{d-1}dy\nonumber\\
&={1\over 2}w'(r)^2r^d+(d/2-1)\int_0^r(w')^2y^{d-1}dy\label{*_2}
\end{align}
where we used
\[\int_0^rw^{\alpha}w'y^{d}dy=w(r)^{\alpha+1}r^d-\int_0^r(\alpha w^{\alpha}w'y^d+dw^{\alpha+1}y^{d-1})dy.\]
Using (\ref{*_1}) to eliminate the $\int(w')^2y^{d-1}dy$ term in (\ref{*_2}) we get the Pohozaev integral.

\mybb{Case 2: $\gamma=1$}

Integrating the $h$-equation $-4\pi e^h=h''+(d-1)r^{-1}h'$ w.r.t. $r^{d-1}dr$ we have
\[-4\pi\int_0^re^{h(y)}y^{d-1}dy=\int_0^rh''(y)y^{d-1}dy+(d-1)\int_0^rh'(y)y^{d-2}dy=h'(r)r^{d-1}.\qed\]
\end{proof}

\begin{theorem}[Support of Lane–Emden stars]\leavevmode
\begin{enumerate}
\item Suppose $w$ is a gas star. Then $w$ has compact support if
\[\gamma>{2d\over d+2}\qquad\text{ or equivalently }\qquad\alpha<{d+2\over d-2}\]
and infinitely support otherwise.
\item (Explicit solution when $\gamma={2d\over d+2}$) When $\gamma=2d/(d+2)$ we have explicit steady state solution
\[w(r)=A\brac{1+{2\pi\over d^2}A^{4\over d-2}r^2}^{1-d/2}\quad\text{or equivalently}\quad\rho(r)=C\brac{1+{2\pi\over d^2}C^{4\over d+2}r^2}^{-1-d/2}.\]
And the support of the liquid star is
\begin{align*}
R=\brac{{d^2\over 2\pi}C^{-{4\over d+2}}(C^{2\over d+2}-1)}^{1\over 2}.
\end{align*}
\end{enumerate}
\end{theorem}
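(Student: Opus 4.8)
Here is how I would approach the proof. I will treat the explicit critical solution first, then the dichotomy, whose subcritical half is the genuine difficulty.

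\textbf{The explicit solution at $\gamma=2d/(d+2)$ (Part 2).} At the critical index one has $\alpha=(d+2)/(d-2)$ and $4\pi\frac{\gamma-1}{\gamma}=\frac{2\pi(d-2)}{d}$. The plan is simply to verify the stated profile by substitution into the enthalpy equation (\ref{enthalpy equation}): writing $w(r)=A(1+\kappa r^2)^{1-d/2}$ with $\kappa=\frac{2\pi}{d^2}A^{4/(d-2)}$, I would compute $w'$ and $w''$, assemble $w''+\frac{d-1}{r}w'$, and check it equals $-\frac{2\pi(d-2)}{d}w^\alpha$. The exponent $1-d/2$ paired with $\alpha=(d+2)/(d-2)$ is exactly what makes the powers of $(1+\kappa r^2)$ agree, and the displayed $\kappa$ is the unique constant balancing the coefficients; by the uniqueness theorem this is \emph{the} solution with $w(0)=A$. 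Passing to $\rho=w^\alpha$, using $(1-d/2)\alpha=-(1+d/2)$ and $C:=\rho(0)=A^\alpha$ (so $A^{4/(d-2)}=C^{4/(d+2)}$), produces the stated $\rho$. Finally the liquid boundary is where $p=\rho^\gamma-1=0$, i.e.\ $\rho(R)=1$; solving $C(1+\kappa R^2)^{-(1+d/2)}=1$ is elementary and yields the stated $R$ (real precisely when $C>1$, as a genuine liquid star requires).

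\textbf{Infinite support when $\alpha\ge(d+2)/(d-2)$ (Part 1, easy half).} I argue by contradiction using the Pohozaev integral. If $w$ had compact support, reaching $0$ at a finite $R$, then $w(R)=0$ while $w'(R)<0$ by (\ref{*}). Evaluating the Pohozaev identity at $r=R$ annihilates the two boundary terms carrying a factor of $w(R)$, leaving
\[2\pi\frac{\gamma-1}{\gamma}\brac{\frac{2d}{1+\alpha}-(d-2)}\int_0^R w(y)^{\alpha+1}y^{d-1}\,dy=\tfrac12 w'(R)^2R^d.\]
The right-hand side is strictly positive, whereas the bracket $\frac{2d}{1+\alpha}-(d-2)$ is $\le 0$ exactly when $\alpha\ge(d+2)/(d-2)$, forcing the left-hand side $\le 0$ --- a contradiction. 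Hence the support is infinite in the critical and supercritical range (the critical case being also confirmed directly by the explicit solution above).

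\textbf{Compact support when $\alpha<(d+2)/(d-2)$ (Part 1, hard half).} Suppose instead $w>0$ on $[0,\infty)$; I will contradict this for $1<\gamma<2$ (so $\alpha>1$). The decay estimate gives $w(r)\le C_0 r^{-2/(\alpha-1)}$, and a short power count shows subcriticality $(\alpha-1)(d-2)<4$ is exactly the condition making $\int_0^\infty w^{\alpha+1}y^{d-1}\,dy<\infty$ together with $w^{\alpha+1}r^d\to0$ and $w^2r^{d-2}\to0$. Letting $r\to\infty$ in the Pohozaev identity then kills the $w^{\alpha+1}r^d$ boundary term while the bulk integral tends to a finite positive limit, giving
\[\tfrac12 w'(r)^2r^d+\tfrac{d-2}{2}w'(r)w(r)r^{d-1}\longrightarrow P_0\qquad(r\to\infty)\]
for some $P_0>0$. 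Since $\brac{w'(r)w(r)r^{d-1}}^2=\brac{w'(r)^2r^d}\brac{w(r)^2r^{d-2}}$ and the second factor vanishes, the first factor $w'(r)^2r^d$ must stay bounded (otherwise the left-hand combination would blow up), whence the cross term tends to $0$ and $w'(r)^2r^d\to 2P_0$. Thus $w'(r)\sim-\sqrt{2P_0}\,r^{-d/2}$, and integrating from $r$ to $\infty$ (legitimate as $d\ge3$) forces $w(r)\sim c\,r^{-(d-2)/2}$ with $c>0$. But in the subcritical range $(d-2)/2<2/(\alpha-1)$, so this is strictly slower than the a priori bound $w\le C_0 r^{-2/(\alpha-1)}$ --- a contradiction. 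Therefore $w$ reaches $0$ at a finite radius. (The endpoint $\gamma=2$, $\alpha=1$, is separate but trivial: the enthalpy equation is then linear, with oscillatory Bessel-type solutions that vanish at a finite radius.)

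The main obstacle is this last half, and specifically the asymptotic bookkeeping of the three Pohozaev boundary terms: one must rule out oscillation or blow-up of $w'(r)^2r^d$ and pin it to a strictly positive limit. Everything then hinges on the single inequality $(d-2)/2<2/(\alpha-1)$, which is equivalent to subcriticality and is precisely what renders the self-consistent decay rate forced by Pohozaev incompatible with the decay estimate.
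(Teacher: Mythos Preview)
Your handling of Part~2 and the infinite-support half of Part~1 matches the paper's: direct substitution for the explicit critical profile, and evaluation of the Pohozaev identity at the putative finite radius $R$ to force a sign contradiction when $\alpha\ge(d+2)/(d-2)$.

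For the compact-support half your route is genuinely different. The paper splits the subcritical range in two. When $(\alpha-1)(d-2)<2$ it uses only the crude mass lower bound $w(r)\ge \tfrac{m_\epsilon}{d-2}r^{-(d-2)}$, obtained by integrating $w'(r)\le -m_\epsilon r^{-(d-1)}$ from $r$ to $\infty$, and contradicts the decay estimate directly because in that sub-range $d-2<2/(\alpha-1)$. When $(\alpha-1)(d-2)\ge2$ it instead feeds the decay estimate back into (\ref{*}) to bound $|w'|$ from above, and then checks that each of the three Pohozaev boundary terms tends to zero separately; since the left side is a strictly positive increasing function of $r$, this is impossible. Your argument runs Pohozaev once over the whole subcritical range: the decay estimate alone kills the $w^{\alpha+1}r^d$ term and gives $w^2r^{d-2}\to0$, and from the surviving combination $\tfrac12 w'(r)^2r^d+\tfrac{d-2}{2}w'(r)w(r)r^{d-1}\to P_0>0$ you extract $w'(r)^2r^d\to 2P_0$ (your parenthetical ``otherwise it would blow up'' is a shorthand --- the real reason is that unboundedness of $w'(r)^2r^d$ would force $w(r)^2r^{d-2}$ to blow up as well, contradicting $B\to0$; a one-line completion-of-squares makes this rigorous). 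Integration then gives $w(r)\sim c\,r^{-(d-2)/2}$, contradicting the decay bound since $(d-2)/2<2/(\alpha-1)$ throughout the subcritical range. The payoff is a single case-free argument; the cost is the short asymptotic extraction, which the paper avoids at the expense of the case split and the extra $w'$ estimate from (\ref{*}).
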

\begin{proof}
Both of these are known standard results (see for example \cite{Heinzle} for i.). i. can be proven from the Pohozaev integral. We included the proof in the appendix for completeness.\qed
\end{proof}

\begin{proposition}[Self-similarity of solutions]
Let $\rho$ be a gaseous steady state. Then $\rho_\kappa(r)=\kappa\rho(\kappa^{1-\gamma/2}r)$ is a gaseous steady state for any $\kappa>0$, and the corresponding liquid star has support $R=\kappa^{-(1-\gamma/2)}\rho^{-1}(1/\kappa)$.
\end{proposition}
\begin{proof}
This is standard result based on scaling argument, we included the proof in the appendix for completeness.\qed
\end{proof}

\subsection{Singular solutions to the steady state equation}

\begin{proposition}[Singular star]
When $2(d-1)-d\gamma\geq 0$, the following solve the steady state equation on $(0,\infty)$
\[\rho(r)=\brac{{1\over 2\pi}{-d\gamma^2+2(d-1)\gamma\over(2-\gamma)^2}}^{1\over 2-\gamma}r^{-{2\over 2-\gamma}}.\]
And this is the only solution of the form $\rho(r)=Ar^a$.
\end{proposition}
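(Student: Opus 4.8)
The plan is to substitute the power-law ansatz $\rho(r)=Ar^a$ directly into the second-order ODE form of the radial steady-state equation and match both the exponents and the coefficients of the resulting power laws. For $\gamma>1$ it is cleanest to work with the enthalpy equation (\ref{enthalpy equation}) in the variable $w=\rho^{\gamma-1}$: writing $w=Br^b$ with $B=A^{\gamma-1}$ and $b=a(\gamma-1)$, the left-hand side $\frac{1}{r^{d-1}}\frac{d}{dr}(r^{d-1}w')$ becomes $b(b+d-2)Br^{b-2}$, while the right-hand side $-4\pi\frac{\gamma-1}{\gamma}w^\alpha$ becomes $-4\pi\frac{\gamma-1}{\gamma}B^\alpha r^{\alpha b}$, where $\alpha=(\gamma-1)^{-1}$. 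Note that any $w$ solving (\ref{enthalpy equation}) does solve the steady-state equation, with potential $\phi=-\frac{\gamma}{\gamma-1}w+\text{const}$; there is no vacuum boundary to worry about since the singular profile never vanishes on $(0,\infty)$.

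Matching exponents forces $b-2=\alpha b$, i.e.\ $b=\frac{2(\gamma-1)}{\gamma-2}$, which translates to $a=b/(\gamma-1)=-\frac{2}{2-\gamma}$, the claimed power. With $a$ fixed, matching coefficients gives $B^{1-\alpha}=A^{-(2-\gamma)}$ equal to $-4\pi\frac{\gamma-1}{\gamma}\,[b(b+d-2)]^{-1}$. A short computation yields $b+d-2=\frac{d\gamma-2(d-1)}{\gamma-2}$ and hence $b(b+d-2)=\frac{2(\gamma-1)(d\gamma-2(d-1))}{(2-\gamma)^2}$, so that
\[A^{2-\gamma}=\frac{\gamma\,(2(d-1)-d\gamma)}{2\pi(2-\gamma)^2}=\frac{1}{2\pi}\cdot\frac{-d\gamma^2+2(d-1)\gamma}{(2-\gamma)^2},\]
which is exactly the stated coefficient. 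Since $2-\gamma>0$ throughout (the hypothesis forces $\gamma\le 2(d-1)/d<2$), the right-hand side is nonnegative, so that a real $A\ge0$ exists, precisely when $2(d-1)-d\gamma\ge0$, the stated condition.

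Uniqueness among profiles $\rho=Ar^a$ then follows because the exponent-matching step determines $a$ uniquely (two nonzero power laws agree on $(0,\infty)$ only if their exponents coincide), after which the coefficient-matching step determines the positive value $A^{2-\gamma}$, hence $A$, uniquely. The case $\gamma=1$ is handled identically using the $h$-equation (\ref{$h$-equation}) with $h=\ln\rho$: the ansatz $\rho=Ar^{-2}$ (so $h=\ln A-2\ln r$) gives $\lpc h=-2(d-2)r^{-2}$ and $4\pi e^h=4\pi A r^{-2}$, forcing $A=(d-2)/(2\pi)$, which agrees with the general formula in the limit $\gamma\to1$. The computation is entirely routine; the only real content is the bookkeeping of exponents and the sign analysis identifying $2(d-1)-d\gamma\ge0$ as the exact threshold for existence of a positive coefficient, so I do not anticipate a genuine obstacle.
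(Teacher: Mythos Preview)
Your proof is correct and follows essentially the same approach as the paper: substitute the power-law ansatz into the enthalpy equation (for $\gamma>1$) or the $h$-equation (for $\gamma=1$), match exponents to determine the power, then match coefficients to determine the constant, observing that positivity of the coefficient is equivalent to $2(d-1)-d\gamma\ge 0$. The only difference is notational: the paper writes the ansatz directly as $w=Ar^a$, whereas you route through $w=Br^b$ with $B=A^{\gamma-1}$, $b=a(\gamma-1)$; the computations are identical.
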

\begin{proof}
First we consider the $\gamma>1$ case. Consider $w(r)=Ar^a$. Substitute into the enthalpy equation
\[w''+(d-1)r^{-1}w'=-4\pi{\gamma-1\over\gamma}w^\alpha\]
we get
\begin{align*}
Aa(a-1)r^{a-2}+A(d-1)ar^{a-2}=-4\pi{\gamma-1\over\gamma}A^\alpha r^{a\alpha}
\end{align*}
For this to have the possibility of holding, we need
\[a\alpha=a-2\qquad\iff\qquad a=-{2\over\alpha-1}=-{1\over\beta}=-2{\gamma-1\over 2-\gamma}.\]
Then the equation becomes
\begin{align*}
-2{\gamma-1\over 2-\gamma}{-\gamma\over 2-\gamma}+-2(d-1){\gamma-1\over 2-\gamma}&=-4\pi{\gamma-1\over\gamma}A^{\alpha-1}\\
\iff\qquad -\gamma^2+(d-1)(2-\gamma)\gamma&=2\pi(2-\gamma)^2A^{2\beta}
\end{align*}
So we need
\begin{align*}
A=\brac{{1\over 2\pi}{-d\gamma^2+2(d-1)\gamma\over(2-\gamma)^2}}^{1\over 2\beta}
\end{align*}
We also need
\begin{align*}
-d\gamma^2+2(d-1)\gamma\geq 0\qquad\iff\qquad 2(d-1)-d\gamma\geq 0.
\end{align*}

For the $\gamma=1$ case, substituting $\rho(r)=Ar^a$ in the steady state equation
\[0=\lpc(\ln\rho)+4\pi\rho={\rho''\over\rho}-{(\rho')^2\over\rho^2}+(d-1){1\over r}{\rho'\over \rho}+4\pi\rho\]
we get
\[0=(a(a-1)-a^2+(d-1)a)r^{-2}+4\pi Ar^a.\]
So we need $a=-2$ and $6-4-2(d-1)+4\pi A=0$.\qed
\end{proof}

\subsection{Dynamical system formulation of the steady state equation}

In order to prove our main (in)stability theorem for liquid stars, we will need a precise estimate for the radius of the liquid star, and due to the self-similar scaling of steady state solutions, this means we need to understand the precise tail behaviour of gaseous stars. And to do that we reformulate the Lane–Emden stars as a solutions to a dynamical system, and utilise methods of dynamical system analogous to that in \cite{Hadzic Lin Rein}.

Let
\[m(r)=4\pi\int_0^ry^{d-1}\rho(y)dy.\]
\begin{alignat*}{3}
\intertext{When $\gamma>1$, the steady state equation (\ref{*}) is}
{dw\over dr}&=-{\gamma-1\over\gamma}{m(r)\over r^{d-1}}&\qquad\text{where}\qquad w&=\rho^{\gamma-1}\\
\intertext{When $\gamma=1$, the steady state equation (\ref{**}) is}
{dh\over dr}&=-{m(r)\over r^{d-1}}&\qquad\text{where}\qquad h&=\ln\rho.
\end{alignat*}
Let
\begin{align*}
u_1(r)&=r^{{2\over 2-\gamma}}\rho(r)\\
u_2(r)&=r^{{2\over 2-\gamma}-d}m(r)
\end{align*}
The steady state equation for $\gamma>1$ is then
\begin{alignat*}{3}
{du_1\over dr}&=r^{{2\over 2-\gamma}}\alpha w^{\alpha-1}{dw\over dr}+{2\over 2-\gamma}r^{{2\over 2-\gamma}-1}\rho
&&=-{1\over\gamma}r^{-1}u_1^{2-\gamma}u_2+{2\over 2-\gamma}r^{-1}u_1\\
{du_2\over dr}&=4\pi r^{{2\over 2-\gamma}-1}\rho-\brac{d-{2\over 2-\gamma}}r^{{2\over 2-\gamma}-d-1}m
&&=4\pi r^{-1}u_1-\brac{d-{2\over 2-\gamma}}r^{-1}u_2
\end{alignat*}
And the steady state equation for $\gamma=1$ is
\begin{alignat*}{3}
{du_1\over dr}&=-r^{3-d}\rho m+2r\rho
&&=-r^{-1}u_1u_2+2r^{-1}u_1\\
{du_2\over dr}&=4\pi r\rho-(d-2)r^{1-d}m
&&=4\pi r^{-1}u_1-(d-2)r^{-1}u_2
\end{alignat*}
Now let $v_j(\tau)=u_j(e^\tau)$, i.e. the change of variable $\tau=\ln r$, then we obtain the planar autonomous dynamical system
\[\begin{array}{r!=l}
\displaystyle{dv_1\over d\tau}&\displaystyle-{1\over\gamma}v_1^{2-\gamma}v_2+{2\over 2-\gamma}v_1\\
\displaystyle{dv_2\over d\tau}&\displaystyle 4\pi v_1-\brac{d-{2\over 2-\gamma}}v_2
\end{array}\]
or equivalently
\[{d\mb v\over d\tau}=\mb F(\mb v)\qquad\text{where}\qquad\mb F(\mb v)=\mat{\displaystyle-{1\over\gamma}v_1^{2-\gamma}v_2+{2\over 2-\gamma}v_1\\\displaystyle 4\pi v_1-\brac{d-{2\over 2-\gamma}}v_2}.\]
Note that $\mb F\in C^\infty((0,\infty)\times\R)\cap C([0,\infty)\times\R)$.

The following two propositions established bounds for $\rho,u_1,u_2$, which will be needed later on when we apply results from dynamical systems to prove the tail behaviour of $\rho$.

\begin{proposition}\label{Buchdahl's inequality}
Suppose $\gamma<2$. Then
\begin{align*}
\rho(r)&\leq\brac{{2\pi\over d}{2-\gamma\over\gamma}}^{-{1\over 2-\gamma}}r^{-{2\over 2-\gamma}}\\
m(r)&\leq 4\pi\brac{d-{2\over 2-\gamma}}^{-1}\brac{{2\pi\over d}{2-\gamma\over\gamma}}^{-{1\over 2-\gamma}}r^{d-{2\over 2-\gamma}}
\end{align*}
\end{proposition}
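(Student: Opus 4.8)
The plan is to derive both estimates from the elementary decay estimate (the Lemma above), which is the only real input. The first inequality is the ``large-$r$'' form of that estimate with the initial-data term discarded, and the second follows by integrating the first against the volume element.

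First I would prove the density bound. Since $\gamma<2$ we have $2-\gamma>0$, so $-\tfrac{1}{2-\gamma}<0$ and $x\mapsto x^{-1/(2-\gamma)}$ is decreasing on $(0,\infty)$; as $\rho_0^{-(2-\gamma)}>0$, discarding it only enlarges the base, so
\[\rho(r)\leq\brac{\rho_0^{-(2-\gamma)}+{2\pi\over d}{2-\gamma\over\gamma}r^2}^{-{1\over 2-\gamma}}\leq\brac{{2\pi\over d}{2-\gamma\over\gamma}r^2}^{-{1\over 2-\gamma}}=\brac{{2\pi\over d}{2-\gamma\over\gamma}}^{-{1\over 2-\gamma}}r^{-{2\over 2-\gamma}},\]
which is exactly the claimed bound. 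For the endpoint $\gamma=1$ I would argue identically from the $h$-version $h(r)\leq-\ln\brac{e^{-h_0}+\tfrac{2\pi}{d}r^2}$: exponentiating and dropping $e^{-h_0}>0$ gives $\rho(r)=e^{h(r)}\leq\brac{\tfrac{2\pi}{d}r^2}^{-1}$, which is the formula at $\gamma=1$ (there $\tfrac{2-\gamma}{\gamma}=1$ and $\tfrac{2}{2-\gamma}=2$).

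Next I would integrate to obtain the mass bound. Since $m(r)=4\pi\int_0^r y^{d-1}\rho(y)\,dy$, the density bound just proved gives
\[m(r)\leq 4\pi\brac{{2\pi\over d}{2-\gamma\over\gamma}}^{-{1\over 2-\gamma}}\int_0^r y^{\,d-1-{2\over 2-\gamma}}\,dy,\]
and with $k:=d-\tfrac{2}{2-\gamma}$ the exponent is $k-1$, so that $\int_0^r y^{k-1}\,dy=r^k/k$ reproduces the stated constant $4\pi\brac{d-\tfrac{2}{2-\gamma}}^{-1}$ and the power $r^{d-2/(2-\gamma)}$.

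The one point that genuinely uses the hypotheses—and the closest thing to an obstacle—is the convergence of $\int_0^r y^{k-1}\,dy$ at the origin, which requires $k>0$, i.e.\ $\gamma<2(d-1)/d$. This is also the precise range in which $\brac{d-\tfrac{2}{2-\gamma}}^{-1}>0$ and the bound is non-vacuous, so I would state the mass bound under this sharper restriction (for $2(d-1)/d\leq\gamma<2$ only the density bound survives). Beyond this nothing is delicate: both steps are one line each, and they are equivalent to the uniform bounds $u_1\leq\brac{\tfrac{2\pi}{d}\tfrac{2-\gamma}{\gamma}}^{-1/(2-\gamma)}$ and $u_2\leq 4\pi\brac{d-\tfrac{2}{2-\gamma}}^{-1}\brac{\tfrac{2\pi}{d}\tfrac{2-\gamma}{\gamma}}^{-1/(2-\gamma)}$ on the dynamical-system variables, which is the form in which they will be fed into the later analysis.
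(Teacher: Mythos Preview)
Your approach is identical to the paper's: derive the density bound by dropping the $\rho_0^{-(2-\gamma)}$ term in the decay estimate, then integrate against $y^{d-1}$ to get the mass bound. Your extra observation that the integral $\int_0^r y^{d-1-2/(2-\gamma)}\,dy$ only converges when $d-\tfrac{2}{2-\gamma}>0$, i.e.\ $\gamma<2(d-1)/d$, is a fair point the paper glosses over in stating the proposition under the weaker hypothesis $\gamma<2$; in practice the mass bound is only invoked later under the stronger assumption $\gamma<2d/(d+2)$, which for $d\geq 3$ implies your condition.
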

\begin{proof}
The first inequality follows from the decay estimates. Then we have
\begin{align*}
m(r)&=4\pi\int_0^ry^{d-1}\rho(y)dy\\
&\leq 4\pi\brac{{2\pi\over d}{2-\gamma\over\gamma}}^{-{1\over 2-\gamma}}\int_0^ry^{d-1-{2\over 2-\gamma}}dy\\
&=4\pi\brac{d-{2\over 2-\gamma}}^{-1}\brac{{2\pi\over d}{2-\gamma\over\gamma}}^{-{1\over 2-\gamma}}r^{d-{2\over 2-\gamma}}.\qed
\end{align*}
\end{proof}

\begin{proposition}\label{u asymptotic}
We have
\begin{alignat*}{3}
u_1(r)&\sim r^{2\over 2-\gamma}\rho(0)&\qquad\text{as}\qquad r\to 0\\
u_2(r)&\sim r^{2\over 2-\gamma}\rho(0){4\pi\over d}&\qquad\text{as}\qquad r\to 0
\end{alignat*}
\end{proposition}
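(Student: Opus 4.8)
The plan is to read off each asymptotic directly from the definitions $u_1(r)=r^{2/(2-\gamma)}\rho(r)$ and $u_2(r)=r^{2/(2-\gamma)-d}m(r)$, using only the continuity of $\rho$ at the origin. This continuity is guaranteed by the existence theorem, since the constructed solution satisfies $\rho(0)=\rho_0$ and is $C^1$ on $[0,R)$; in particular $\rho(r)\to\rho(0)$ as $r\to 0$.

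For the first relation, the statement is essentially immediate. Dividing $u_1(r)$ by $r^{2/(2-\gamma)}\rho(0)$ reduces the claim to $\rho(r)/\rho(0)\to 1$ as $r\to 0$, which is exactly continuity of the density at the centre. So $u_1(r)\sim r^{2/(2-\gamma)}\rho(0)$.

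For the second relation, the only genuine content is the small-$r$ behaviour of the mass $m(r)=4\pi\int_0^r y^{d-1}\rho(y)\,dy$. I would isolate the leading term by writing $m(r)-4\pi\rho(0)r^d/d=4\pi\int_0^r y^{d-1}\brac{\rho(y)-\rho(0)}dy$ and bounding the right-hand side: given $\varepsilon>0$, continuity yields $\delta>0$ with $\abs{\rho(y)-\rho(0)}<\varepsilon$ for $y<\delta$, whence the integral is at most $\varepsilon r^d/d$ for $r<\delta$. This shows $m(r)=4\pi\rho(0)r^d/d+o(r^d)$, i.e.\ $m(r)\sim 4\pi\rho(0)r^d/d$. (Equivalently, one line of L'Hôpital gives $\lim_{r\to 0}m(r)/r^d=4\pi\rho(0)/d$.) Multiplying by $r^{2/(2-\gamma)-d}$ then gives $u_2(r)\sim (4\pi/d)\rho(0)r^{2/(2-\gamma)}$.

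There is no real obstacle here: the proposition is a routine consequence of continuity of the density at the centre, with the mass asymptotic being a standard elementary estimate. The only point worth noting is that the same argument covers the $\gamma=1$ case verbatim, since there $\rho=e^{h}$ is likewise continuous with $\rho(0)=\rho_0$, and the exponent $2/(2-\gamma)$ simply specializes to $2$.
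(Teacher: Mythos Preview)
Your proof is correct and follows essentially the same approach as the paper: both arguments divide by the claimed asymptotic and use continuity of $\rho$ at the origin, with the $u_2$ case handled by writing $\rho(y)=\rho(0)+o(1)$ inside the mass integral (the paper) or, equivalently, bounding $\int_0^r y^{d-1}(\rho(y)-\rho(0))\,dy$ via an $\varepsilon$--$\delta$ argument (your version). The only difference is presentational; your write-up is slightly more explicit.
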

\begin{proof}
We have
\begin{align*}
{u_1(r)\over r^{2\over 2-\gamma}\rho(0)}={\rho(r)\over\rho(0)}\to 1\qquad\text{as}\qquad r\to 0
\end{align*}
\begin{align*}
{u_2(r)\over r^{2\over 2-\gamma}\rho(0){4\pi\over d}}&={d\over\rho(0)}r^{-d}\int_0^ry^{d-1}\rho(y)dy
={d\over\rho(0)}r^{-d}\int_0^ry^{d-1}(\rho(0)+o(1))dy\\
&={d\over\rho(0)}\brac{{\rho(0)\over d}+r^{-d}o(r^d)}
\to 1\qquad\text{as}\qquad r\to 0.\qed
\end{align*}
\end{proof}

\subsubsection{Steady states of the dynamical system}

In order to apply results from dynamical systems to our case, we need to know the steady states of the dynamical system. The following lemma detailed the steady states of the dynamical system and their property.

\begin{lemma}
Let $2(d-1)-d\gamma>0$. The dynamical system
\[{d\mb v\over d\tau}=\mb F(\mb v)\qquad\text{where}\qquad\mb F(\mb v)=\mat{\displaystyle-{1\over\gamma}v_1^{2-\gamma}v_2+{2\over 2-\gamma}v_1\\\displaystyle 4\pi v_1-\brac{d-{2\over 2-\gamma}}v_2}\]
has two steady states
\begin{align*}
\mb v&=\mb 0\qquad\text{and}\qquad\\
\mb v&=\mb v^*:=\mat{\displaystyle\brac{{1\over 2\pi}{\gamma\over 2-\gamma}\brac{d-{2\over 2-\gamma}}}^{1\over 2-\gamma}\\\displaystyle{2\gamma\over 2-\gamma}\brac{{1\over 2\pi}{\gamma\over 2-\gamma}\brac{d-{2\over 2-\gamma}}}^{\gamma-1\over 2-\gamma}}
=\mat{\displaystyle\brac{{1\over 2\pi}{-d\gamma^2+2(d-1)\gamma\over(2-\gamma)^2}}^{1\over 2-\gamma}\\\displaystyle{2\gamma\over 2-\gamma}\brac{{1\over 2\pi}{-d\gamma^2+2(d-1)\gamma\over(2-\gamma)^2}}^{\gamma-1\over 2-\gamma}}.
\end{align*}
Moreover, if $\gamma<{2d\over d+2}$, then $\mb v^*$ is (exponentially) stable.
\end{lemma}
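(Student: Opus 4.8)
The plan is to solve $\mb F(\mb v)=\mb 0$ explicitly and then linearise at the nontrivial root. First I would factor the first component of $\mb F$ as $v_1\brac{-\frac{1}{\gamma}v_1^{1-\gamma}v_2+\frac{2}{2-\gamma}}$, so that a steady state must satisfy either $v_1=0$ or $v_1^{1-\gamma}v_2=\frac{2\gamma}{2-\gamma}$. The key preliminary observation is that the hypothesis $2(d-1)-d\gamma>0$ is equivalent (since $\gamma<2$) to
\[b:=d-\frac{2}{2-\gamma}=\frac{2(d-1)-d\gamma}{2-\gamma}>0.\]
In the branch $v_1=0$ the second equation $4\pi v_1-b\,v_2=0$ then forces $v_2=0$, giving the origin. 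In the branch $v_1\neq0$ I substitute $v_2=\frac{2\gamma}{2-\gamma}v_1^{\gamma-1}$ into $4\pi v_1=b\,v_2$ to get $v_1^{2-\gamma}=\frac{1}{2\pi}\frac{\gamma}{2-\gamma}b$, which (again by $b>0$) has a unique positive root, namely the first coordinate of $\mb v^*$; back-substitution yields the stated $v_2$. This produces exactly the two claimed steady states. The equivalence of the two displayed formulas for $\mb v^*$ is the identity $\frac{\gamma}{2-\gamma}b=\frac{\gamma\brac{2(d-1)-d\gamma}}{(2-\gamma)^2}=\frac{-d\gamma^2+2(d-1)\gamma}{(2-\gamma)^2}$, which I would record in passing.

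For the stability claim I would compute the Jacobian
\[D\mb F(\mb v)=\mat{-\frac{2-\gamma}{\gamma}v_1^{1-\gamma}v_2+\frac{2}{2-\gamma} & -\frac{1}{\gamma}v_1^{2-\gamma}\\[2pt] 4\pi & -b}\]
and evaluate it at $\mb v^*$. The whole point of the calculation is that the two fixed-point relations collapse the first row: $v_1^{1-\gamma}v_2=\frac{2\gamma}{2-\gamma}$ turns the $(1,1)$ entry into $-2+\frac{2}{2-\gamma}=\frac{2(\gamma-1)}{2-\gamma}$, and $v_1^{2-\gamma}=\frac{1}{2\pi}\frac{\gamma}{2-\gamma}b$ turns the $(1,2)$ entry into $-\frac{b}{2\pi(2-\gamma)}$, so that
\[J:=D\mb F(\mb v^*)=\mat{\frac{2(\gamma-1)}{2-\gamma} & -\frac{b}{2\pi(2-\gamma)}\\[2pt] 4\pi & -b}.\]

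It then remains to read off the signs of the trace and determinant. A short computation gives $\det J=-\frac{2(\gamma-1)}{2-\gamma}b+\frac{2b}{2-\gamma}=2b>0$, which holds automatically under the standing hypothesis, together with
\[\tr J=\frac{2(\gamma-1)}{2-\gamma}-b=\frac{(d+2)\gamma-2d}{2-\gamma},\]
so that $\tr J<0$ precisely when $\gamma<\frac{2d}{d+2}$. Since a real $2\times2$ matrix with positive determinant and negative trace has both eigenvalues in the open left half-plane, the standard linearised-stability theorem applied to the smooth vector field $\mb F$ yields that $\mb v^*$ is exponentially stable in this range. The argument is entirely elementary; the only step requiring care is the bookkeeping that reduces $J$ via the two fixed-point identities, after which the threshold $\gamma<2d/(d+2)$ emerges exactly as the sign condition on the trace, the determinant being positive for free from $2(d-1)-d\gamma>0$.
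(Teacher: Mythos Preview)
Your proof is correct and follows essentially the same approach as the paper: solve $\mb F(\mb v)=\mb 0$ directly, compute the Jacobian at $\mb v^*$, and analyse its eigenvalues. The only difference is in the last step, where the paper writes out the eigenvalues explicitly via the quadratic formula and then checks $\Re\lambda<0$ through a chain of equivalences reducing to $d(2-\gamma)(3-\gamma)>0$, whereas you use the trace--determinant criterion ($\det J=2b>0$, $\tr J=\frac{(d+2)\gamma-2d}{2-\gamma}<0$), which is a cleaner route to the same conclusion and makes the threshold $\gamma<2d/(d+2)$ appear more transparently.
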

\begin{proof}
It can be checked directly that $\mb F(\mb v)=\mb 0$ iff $\mb v=\mb 0$ or $\mb v=\mb v^*$. Note that $\mb F$ is differentiable at $\mb v^*$, but not at $\mb 0$ (unless $\gamma=1$). On $(0,\infty)\times\R$ we have
\begin{align*}
\grad\mb F(\mb v)=\mat{\displaystyle-{2-\gamma\over\gamma}v_1^{1-\gamma}v_2+{2\over 2-\gamma}&\displaystyle-{1\over\gamma}v_1^{2-\gamma}\\
\displaystyle 4\pi&\displaystyle-\brac{d-{2\over 2-\gamma}}}
\end{align*}
So
\begin{align*}
\grad\mb F(\mb v^*)&=\mat{\displaystyle{2\over 2-\gamma}-2&\displaystyle-{1\over 2\pi}{-d\gamma+2(d-1)\over(2-\gamma)^2}\\4\pi&\displaystyle-\brac{d-{2\over 2-\gamma}}}
\end{align*}

The eigenvalues of $\grad\mb F(\mb v^*)$ are
\[\lambda={2\over 2-\gamma}-1-{d\over 2}\pm{1\over 2}\sqrt{(d-2)^2-8{-d\gamma+2(d-1)\over(2-\gamma)^2}}.\]
So assuming
\[d+2>{4\over 2-\gamma}\qquad\iff\qquad\gamma<{2d\over d+2}\]
the system at $\mb v^*$ is (exponentially) stable provided
\begin{align*}
0&>\Re\brac{{2\over 2-\gamma}-1-{d\over 2}+{1\over 2}\sqrt{(d-2)^2-8{-d\gamma+2(d-1)\over(2-\gamma)^2}}}
\end{align*}
\begin{alignat*}{3}
&\iff\qquad&\brac{d+2-{4\over 2-\gamma}}^2&>(d-2)^2-8{-d\gamma+2(d-1)\over(2-\gamma)^2}\\
&\iff\qquad&8d&>{8\over(2-\gamma)^2}\brac{d\gamma-2(d-1)-2}\\
&\iff\qquad&d(2-\gamma)^2&>d\gamma-2(d-1)-2\\
&\iff\qquad&d(2-\gamma)((2-\gamma)+1)&>0\\
&\iff\qquad&d(2-\gamma)(3-\gamma)&>0
\end{alignat*}
which is true.\qed
\end{proof}

\subsection{Tail behaviour for gaseous star}

The following result gives detailed estimate for the tail behaviour for gaseous stars, and hence the boundary behaviour of liquid stars, that is crucial to prove our main (in)stability result in the next section.

\begin{theorem}
Suppose $\gamma<2d/(d+2)$. Then the gas star tends asymptotically to the singular star as $r\to\infty$. More precisely, there exist $c>0$ such that
\begin{align*}
\abs{r^{{2\over 2-\gamma}}\rho(r)-\brac{{1\over 2\pi}{-d\gamma^2+2(d-1)\gamma\over(2-\gamma)^2}}^{1\over 2-\gamma}}=\abs{u_1(r)-v^*_1}\lesssim r^{-c}\\
\abs{r^{{2\over 2-\gamma}-d}m(r)-{2\gamma\over 2-\gamma}\brac{{1\over 2\pi}{-d\gamma^2+2(d-1)\gamma\over(2-\gamma)^2}}^{\gamma-1\over 2-\gamma}}=\abs{u_2(r)-v^*_2}\lesssim r^{-c}
\end{align*}
\end{theorem}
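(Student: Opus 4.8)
The plan is to read both estimates off the phase portrait of the planar autonomous system $\frac{d\mb v}{d\tau}=\mb F(\mb v)$ with $\tau=\ln r$, and then to upgrade qualitative convergence into an exponential rate using the fact that $\mb v^*$ is a hyperbolic sink. Since $\tau=\ln r$, both displayed assertions are equivalent to $\abs{\mb v(\tau)-\mb v^*}\lesssim e^{-c\tau}$ as $\tau\to\infty$, so it suffices to prove that the orbit $\tau\mapsto\mb v(\tau)$ converges to $\mb v^*$ and does so exponentially. By Proposition \ref{u asymptotic} the orbit emanates from the origin as $\tau\to-\infty$ (that is, $\mb v(\tau)\to\mb 0$), and by Proposition \ref{Buchdahl's inequality} the components $u_1,u_2$ are uniformly bounded; since $v_1=r^{2/(2-\gamma)}\rho>0$ and $v_2=r^{2/(2-\gamma)-d}m>0$, the orbit stays in a fixed compact subset of the open quadrant $\{v_1>0,\ v_2>0\}$ for all $\tau$.

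Because the system is planar and this orbit is bounded, I would invoke the Poincar\'e--Bendixson theorem: the $\omega$-limit set $\omega(\mb v)$ is nonempty, compact and connected, and must be a single equilibrium, a periodic orbit, or a cyclic graph of equilibria joined by connecting orbits. The only equilibria of $\mb F$ in the closed quadrant are $\mb 0$ and $\mb v^*$, and by the preceding lemma $\mb v^*$ is a hyperbolic sink (both eigenvalues of $\grad\mb F(\mb v^*)$ have negative real part) precisely when $\gamma<2d/(d+2)$. The goal is therefore to exclude every alternative and conclude $\omega(\mb v)=\{\mb v^*\}$.

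This exclusion is the main obstacle. One computes $\div\mb F=-\frac{2-\gamma}{\gamma}v_1^{1-\gamma}v_2+\frac{4}{2-\gamma}-d$, which is \emph{not} sign-definite on the quadrant (the constant part $\frac{4}{2-\gamma}-d$ can be positive in our range), so Bendixson's criterion does not apply directly. I would instead seek a Dulac multiplier $g(\mb v)=v_1^pv_2^q$ making $\div(g\mb F)$ one-signed on the quadrant, which simultaneously forbids periodic orbits and any homoclinic or heteroclinic cycle in the region; equivalently, one analyses the two nullclines $\{F_1=0\}$ and $\{F_2=0\}$ to build a forward-invariant trapping box around $\mb v^*$ that the orbit is forced to enter. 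The remaining possibility $\omega(\mb v)=\{\mb 0\}$ would require the orbit, which already departs from $\mb 0$ as $\tau$ increases, to return and form a homoclinic loop at $\mb 0$; this is excluded by the same absence of closed curves (a delicate point since $\mb F$ is not even differentiable at $\mb 0$ unless $\gamma=1$, so $\mb 0$ must be handled by the Dulac sign rather than linearisation). This phase-plane analysis is adapted from the dynamical-systems method of \cite{Hadzic Lin Rein}, and finding the correct Dulac multiplier or invariant region is where the real work lies.

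Once $\mb v(\tau)\to\mb v^*$ is established, the rate is standard. Since $\grad\mb F(\mb v^*)$ has both eigenvalues with negative real part, for any $c$ with $0<c<-\max_j\Re\lambda_j$ there is a neighbourhood of $\mb v^*$ on which the nonlinear flow obeys $\abs{\mb v(\tau)-\mb v^*}\lesssim e^{-c\tau}$, by the usual estimate for a hyperbolic sink (stable-manifold/Hartman--Grobman linearisation, or a quadratic Lyapunov function built from $\grad\mb F(\mb v^*)$). As the orbit enters this neighbourhood for all large $\tau$, the bound holds eventually, and substituting $\tau=\ln r$ turns $e^{-c\tau}$ into $r^{-c}$, giving $\abs{u_1(r)-v^*_1}+\abs{u_2(r)-v^*_2}\lesssim r^{-c}$ as claimed.
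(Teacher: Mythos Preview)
Your overall strategy coincides with the paper's: convert to the planar autonomous system in $\tau=\ln r$, apply Poincar\'e--Bendixson on a compact forward-invariant region, rule out closed orbits by Bendixson--Dulac, and conclude exponential convergence from the hyperbolic sink at $\mb v^*$. The Dulac multiplier you are looking for is in fact $g(\mb v)=v_1^{-(2-\gamma)}$ (so $p=-(2-\gamma)$, $q=0$): one computes
\[
\div\!\brac{v_1^{-(2-\gamma)}\mb F(\mb v)}=-\brac{d-\tfrac{2\gamma}{2-\gamma}}v_1^{-(2-\gamma)}<0\qquad\text{on }(0,\infty)^2
\]
precisely when $\gamma<2d/(d+2)$, which eliminates periodic orbits and, together with the stability of $\mb v^*$, any homoclinic loop at $\mb v^*$.

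The genuine gap is your claim that ``the orbit stays in a fixed compact subset of the open quadrant $\{v_1>0,v_2>0\}$ for all $\tau$.'' Positivity plus the upper bounds from Proposition~\ref{Buchdahl's inequality} do \emph{not} give a positive lower bound on $v_1$; indeed you note yourself that $\mb v(\tau)\to\mb 0$ as $\tau\to-\infty$, so the full orbit certainly does not sit in such a compact set, and nothing you wrote prevents $v_1(\tau)\to 0$ along a subsequence as $\tau\to+\infty$. This matters because $\mb F$ is not $C^1$ along $\{v_1=0\}$ (unless $\gamma=1$), so Poincar\'e--Bendixson is unavailable without first trapping the orbit away from that axis. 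In the paper this is the bulk of the work: one fixes a small level $\epsilon$, lets $\tau_1$ be the first time $v_1=\epsilon$ (increasing), and if $v_1$ ever returns to $\epsilon$ at some $\tau_2>\tau_1$, one shows via the sign structure of $F_1,F_2$ that assuming $F_1(\mb v(\tau))<0$ for all $\tau\ge\tau_2$ forces $v_2\to 0$ and then contradicts the \emph{Pohozaev integral} (which is strictly negative in this range of $\gamma$). Hence some $\tau_3\ge\tau_2$ has $F_1(\mb v(\tau_3))=0$, and a short geometric argument produces a forward-invariant region bounded by the arc $\mb v([\tau_0,\tau_3])$ and a vertical segment at $v_1=v_1(\tau_3)$, giving $v_1(\tau)\ge\epsilon'>0$ for all $\tau\ge\tau_3$. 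Only after this trapping step do Poincar\'e--Bendixson and the Dulac argument apply. Your proposal correctly anticipates that ``the real work lies'' in this phase-plane step, but the specific mechanism---the lower bound on $v_1$ obtained through the Pohozaev identity---is missing.
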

\begin{proof}
We will work with the dynamical system formulation formulated in the previous section. From proposition \ref{Buchdahl's inequality} and the non-negativity of $\rho$ and $m$ we have
\begin{align*}
0&<v_1\leq\brac{{2\pi\over d}{2-\gamma\over\gamma}}^{-{1\over 2-\gamma}}\\
0&<v_2\leq 4\pi\brac{d-{2\over 2-\gamma}}^{-1}\brac{{2\pi\over d}{2-\gamma\over\gamma}}^{-{1\over 2-\gamma}}
\end{align*}
We will show that there exist $\epsilon'>0$ and $T\in\R$ such that $v_1(\tau)\geq\epsilon'$ for all $\tau\geq T$. By proposition \ref{u asymptotic} we have $v_1(\tau)\sim e^{{2\tau/(2-\gamma)}}\rho(0)$ as $\tau\to-\infty$. So for $\epsilon>0$ small enough, we can find $\tau_1\in\R$ such that $v_1(\tau_1)=\epsilon$ and $v_1(\tau_1+\delta)>\epsilon$ for small $\delta>0$. If $v_1(\tau)>\epsilon$ for all $\tau>\tau_1$ then we are done. Otherwise there exist a least $\tau_2>\tau_1$ such that $v_1(\tau_2)=\epsilon$. Since $F_1(\epsilon,v_2(\tau_1))>0$, from the expression of $F_1$ we see that $F_1(\epsilon,y)>0$ for all $y\in[0,v_2(\tau_1)]$. So we must have $v_2(\tau_2)>v_2(\tau_1)$. We must have $F_1(\epsilon,v_2(\tau_2))\leq 0$. We claim that there exist $\tau_3\geq\tau_2$ such that $F_1(\mb v(\tau_3))=0$. Suppose no such point exist, then $F_1(\mb v(\tau))<0$ for all $\tau\geq\tau_2$, in other words
\begin{align}
{1\over\gamma}v_1(\tau)^{2-\gamma}v_2(\tau)>{2\over 2-\gamma}v_1(\tau)\qquad\text{for all}\qquad\tau\geq\tau_2\label{star}.
\end{align}
We will show that this is impossible. Using (\ref{star}) we have for all $\tau\geq\tau_2$
\[F_2(\mb v(\tau))=4\pi v_1(\tau)-\brac{d-{2\over 2-\gamma}}v_2(\tau)<4\pi\epsilon-\brac{d-{2\over 2-\gamma}}\epsilon^{\gamma-1}{2\gamma\over 2-\gamma}.\]
By choosing $\epsilon$ small enough, we can make $F_2(\mb v(\tau))$ less than a fix strictly negative number for all $\tau\geq\tau_2$. This means $v_2(\tau)\to 0$ as $\tau\to\infty$.
\begin{enumerate}
\item When $\gamma=1$, (\ref{star}) gives $v_2(\tau)>2$ for all $\tau\geq\tau_2$. This is a contradiction.
\item When $\gamma>1$, (\ref{star}) gives 
\[\brac{{2-\gamma\over 2\gamma}v_2(\tau)}^{1\over\gamma-1}>v_1(\tau)\qquad\text{for all}\qquad\tau\geq\tau_2.\]
So for all $\tau\geq\tau_2$ we have
\begin{align*}
{dv_2\over d\tau}&=4\pi v_1-\brac{d-{2\over 2-\gamma}}v_2\\
&<4\pi\brac{{2-\gamma\over 2\gamma}v_2}^{1\over\gamma-1}-\brac{d-{2\over 2-\gamma}}v_2\\
&\sim-\brac{d-{2\over 2-\gamma}}v_2\qquad\text{as}\qquad v_2\to 0.
\end{align*}
So we must have
\begin{alignat*}{3}
|v_1(\tau)|&\to 0\qquad &&\text{as}\qquad\tau\to\infty\\
|v_2(\tau)|&=O(e^{-(d-{2\over 2-\gamma})\tau})\qquad &&\text{as}\qquad\tau\to\infty.
\end{alignat*}
In other words we have
\begin{alignat*}{3}
|u_1(r)|&\to 0\qquad &&\text{as}\qquad r\to\infty\\
|u_2(r)|&=O(r^{-(d-{2\over 2-\gamma})})\qquad &&\text{as}\qquad r\to\infty.
\end{alignat*}
The Pohozaev integral gives
\begin{align*}
2\pi{\gamma-1\over\gamma}\brac{2-{2-\gamma\over\gamma}d}\int_0^r\rho(y)^\gamma y^{d-1}dy
&\geq{1\over 2}(d-2)w'(r)w(r)r^{d-1}\\
&=-{1\over 2}{\gamma-1\over\gamma}(d-2)u_1(r)^{\gamma-1}u_2(r)r^{d-{2\over 2-\gamma}-2{\gamma-1\over 2-\gamma}}\\
&\to 0\qquad\text{as}\qquad r\to\infty
\end{align*}
but the LHS of the equation becomes more and more negative as $r\to\infty$. This is a contradiction
\end{enumerate}
Therefore we have $\tau_3\geq\tau_2$ such that $F_1(\mb v(\tau_3))=0$ and $F_2(\mb v(\tau_3))<0$. From the expression for $\mb F$ we see that $F_1(v_1(\tau_3),x)\geq 0$ for all $x\in[0,v_2(\tau_3)]$. Let $\tau_0\leq\tau_1$ be the point such that $v_1(\tau_0)=v_1(\tau_3)$. Since $F_1(\mb v(\tau_0))>0$ and $F_1(\mb v(\tau_3))=0$, we see from the expression of $\mb F$ that we must have $v_2(\tau_3)>v_2(\tau_0)$. Hence for $\tau>\tau_3$, $v(\tau)$ stays in the region bounded by the arc $v([v_0,v_3])$ and the line $\{(v_1(\tau_3),x):x\in[v_2(\tau_0),v_2(\tau_3)]\}$. Hence $T=\tau_3$ works.

So $\{\mb v(\tau):\tau\geq T\}$ lies in a compact set within the region $v_1>\epsilon/2$ where $\mb F$ is $C^1$. This means its $\omega$-limit set is non-empty, compact and connected by a standard result in dynamical systems. By the Poincar\'e-Bendixson theorem, the $\omega$-limit set must be either
\begin{enumerate}[(a)]
\item $\{\mb v^*\}$;
\item a periodic orbit;
\item homoclinic obits connecting $\mb v^*$.
\end{enumerate}

But by Bendixson-Dulac theorem, there is no periodic orbits in $(0,\infty)\times(0,\infty)$ since on this region
\begin{align*}
\grad\cdot\brac{{1\over v_1^{2-\gamma}}\mb F(\mb v)}&=2{\gamma-1\over 2-\gamma}v_1^{-(2-\gamma)}-\brac{d-{2\over 2-\gamma}}v_1^{-(2-\gamma)}
=-\brac{d-{2\gamma\over 2-\gamma}}v_1^{-(2-\gamma)}<0
\end{align*}
where we note that
\[d>{2\gamma\over 2-\gamma}\qquad\iff\qquad\gamma<{2d\over d+2}.\]
Since $\mb v^*$ is (exponentially) stable, the $\omega$-limit set cannot have homoclinic orbits connecting $\mb v^*$ either. So the $\omega$-limit set of $\mb v$ must be $\{\mb v^*\}$. So $\mb v(\tau)\to\mb v^*$ as $\tau\to\infty$. Since the fixed point $\mb v^*$ is exponentially stable, we have $\|\mb v(\tau)-\mb v^*\|\lesssim e^{-c\tau}$ for some $c>0$. Converting to the variable $r$ gives us the desired result.\qed
\end{proof}

\begin{corollary}\label{Large central density limit}
Suppose $\gamma<2d/(d+2)$. Let $\rho$ be a gaseous steady state and $\rho_\kappa(r)=\kappa\rho(\kappa^{1-\gamma/2}r)$. Then there exist $c>0$ such that
\begin{alignat*}{3}
\abs{r^{{2\over 2-\gamma}}\rho_\kappa(r)-\brac{{1\over 2\pi}{-d\gamma^2+2(d-1)\gamma\over(2-\gamma)^2}}^{1\over 2-\gamma}}&=\abs{r^{{2\over 2-\gamma}}\rho_\kappa(r)-v^*_1}&\lesssim(\kappa^{1-\gamma/2}r)^{-c}\\
\abs{r^{{2\over 2-\gamma}-d}m_\kappa(r)-{2\gamma\over 2-\gamma}\brac{{1\over 2\pi}{-d\gamma^2+2(d-1)\gamma\over(2-\gamma)^2}}^{\gamma-1\over 2-\gamma}}&=\abs{r^{{2\over 2-\gamma}-d}m_\kappa(r)-v^*_2}&\lesssim(\kappa^{1-\gamma/2}r)^{-c}
\end{alignat*}
\end{corollary}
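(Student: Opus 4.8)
The plan is to observe that the self-similar variables $u_1$ and $u_2$ are essentially invariant under the scaling $\rho\mapsto\rho_\kappa$: they transform simply by evaluation at the rescaled radius $\kappa^{1-\gamma/2}r$. Once this is established, both estimates follow at once by applying the preceding tail-behaviour theorem to the original star $\rho$.

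First I would recall from the self-similarity proposition that $\rho_\kappa(r)=\kappa\rho(\kappa^{1-\gamma/2}r)$ is again a gaseous steady state, so the tail-behaviour theorem is applicable to it. Writing $s=\kappa^{1-\gamma/2}r$ and letting $u_1,u_2$ denote the self-similar variables of the \emph{original} $\rho$, a direct substitution using $r=\kappa^{-(1-\gamma/2)}s$ and hence $r^{2/(2-\gamma)}=\kappa^{-1}s^{2/(2-\gamma)}$ gives
\[r^{2/(2-\gamma)}\rho_\kappa(r)=\kappa^{-1}s^{2/(2-\gamma)}\cdot\kappa\rho(s)=s^{2/(2-\gamma)}\rho(s)=u_1(s).\]

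Next I would treat the mass analogously. Changing variables $z=\kappa^{1-\gamma/2}y$ in $m_\kappa(r)=4\pi\int_0^r y^{d-1}\rho_\kappa(y)\,dy$ yields $m_\kappa(r)=\kappa^{\,1-(1-\gamma/2)d}m(s)$, and combining this with $r^{2/(2-\gamma)-d}=\kappa^{\,-1+(1-\gamma/2)d}s^{2/(2-\gamma)-d}$ the two powers of $\kappa$ cancel exactly, so that
\[r^{2/(2-\gamma)-d}m_\kappa(r)=s^{2/(2-\gamma)-d}m(s)=u_2(s).\]
Conceptually this is the heart of the matter: the scaling $\rho\mapsto\rho_\kappa$ corresponds precisely to a translation $\tau\mapsto\tau+\mathrm{const}$ in the variable $\tau=\ln r$, which is exactly why the system $d\mb v/d\tau=\mb F(\mb v)$ is autonomous; hence the self-similar quantities of $\rho_\kappa$ are just those of $\rho$ shifted in $\tau$.

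Finally I would invoke the tail-behaviour theorem for $\rho$, which gives $\abs{u_1(s)-v^*_1}\lesssim s^{-c}$ and $\abs{u_2(s)-v^*_2}\lesssim s^{-c}$ for some $c>0$. Substituting $s=\kappa^{1-\gamma/2}r$ together with the two identities above produces exactly the two claimed bounds, with the right-hand sides $(\kappa^{1-\gamma/2}r)^{-c}$. I do not expect any genuine obstacle here beyond the exponent bookkeeping in the mass computation; the one point that must be verified carefully is that the powers of $\kappa$ cancel completely, so that no $\kappa$-dependent constant survives in front of $(\kappa^{1-\gamma/2}r)^{-c}$.
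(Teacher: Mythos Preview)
Your proposal is correct and follows essentially the same route as the paper: both arguments reduce the corollary to the preceding tail-behaviour theorem by checking that $r^{2/(2-\gamma)}\rho_\kappa(r)=u_1(s)$ and $r^{2/(2-\gamma)-d}m_\kappa(r)=u_2(s)$ with $s=\kappa^{1-\gamma/2}r$, the latter via the change-of-variables identity $m_\kappa(r)=\kappa^{1-d(1-\gamma/2)}m(s)$. Your added remark that this scaling amounts to a translation in $\tau=\ln r$, explaining the autonomy of the dynamical system, is a nice conceptual gloss but not needed for the argument.
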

\begin{proof}
Using the last theorem we have
\begin{align*}
\abs{r^{{2\over 2-\gamma}}\rho_\kappa(r)-v^*_1}=\abs{(\kappa^{1-\gamma/2}r)^{{2\over 2-\gamma}}\rho(\kappa^{1-\gamma/2}r)-v^*_1}\lesssim(\kappa^{1-\gamma/2}r)^{-c}.
\end{align*}
We have
\begin{align*}
m_\kappa(r)&=4\pi\int_0^ry^{d-1}\rho_\kappa(y)dy=4\pi\kappa^{1-(d-1)(1-\gamma/2)}\int_0^r(\kappa^{1-\gamma/2}y)^{d-1}\rho(\kappa^{1-\gamma/2}y)dy\\
&=4\pi\kappa^{1-d(1-\gamma/2)}\int_0^{\kappa^{1-\gamma/2}r}z^{d-1}\rho(z)dz
=\kappa^{1-d(1-\gamma/2)}m(\kappa^{1-\gamma/2}r)
\end{align*}
So using the last theorem we have
\begin{align*}
\abs{r^{{2\over 2-\gamma}-d}m_\kappa(r)-v^*_2}=\abs{(\kappa^{1-\gamma/2}r)^{{2\over 2-\gamma}-d}m(\kappa^{1-\gamma/2}r)-v^*_2}\lesssim(\kappa^{1-\gamma/2}r)^{-c}.\qed
\end{align*}
\end{proof}

\begin{corollary}\label{radius limit}
Suppose $\gamma<2d/(d+2)$. Let $\rho$ be a gaseous steady state and $\rho_\kappa(r)=\kappa\rho(\kappa^{1-\gamma/2}r)$. Then the liquid star $\rho_\kappa$ has radius
\begin{align*}
R_\kappa\to R_\infty:=(v^*_1)^{1-\gamma/2}
=\brac{{1\over 2\pi}{-d\gamma^2+2(d-1)\gamma\over(2-\gamma)^2}}^{1\over 2}
\qquad\text{as}\qquad\kappa\to\infty.
\end{align*}
\end{corollary}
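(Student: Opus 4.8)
The plan is to identify the radius $R_\kappa$ of the liquid star with the level where the gaseous profile $\rho_\kappa$ drops to the normalised boundary value $1$, and then to feed this into the uniform asymptotics of Corollary \ref{Large central density limit}. Recall that the liquid star is obtained by truncating the gaseous star $\rho_\kappa$ at the point where its density equals $1$ (the boundary density of the normalised liquid equation of state), so its radius is characterised by $\rho_\kappa(R_\kappa)=1$, and by the self-similarity proposition $\kappa^{1-\gamma/2}R_\kappa=\rho^{-1}(1/\kappa)$, where $\rho^{-1}$ is the inverse of the strictly decreasing base profile.

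First I would evaluate the first estimate of Corollary \ref{Large central density limit} at $r=R_\kappa$. Since $\rho_\kappa(R_\kappa)=1$, the quantity $r^{2/(2-\gamma)}\rho_\kappa(r)$ collapses to $R_\kappa^{2/(2-\gamma)}$, so the estimate reads
\[
\abs{R_\kappa^{2/(2-\gamma)}-v^*_1}\lesssim\brac{\kappa^{1-\gamma/2}R_\kappa}^{-c}=\brac{\rho^{-1}(1/\kappa)}^{-c}.
\]
The key step is then to show the right-hand side vanishes as $\kappa\to\infty$. Because $\gamma<2d/(d+2)$, the base gaseous star has infinite support (by the Support theorem) and, by the decay estimates, $\rho$ decreases strictly to $0$ as $r\to\infty$; hence the level at which $\rho$ attains the shrinking value $1/\kappa$ recedes to infinity, i.e.\ $\rho^{-1}(1/\kappa)\to\infty$, so $\brac{\rho^{-1}(1/\kappa)}^{-c}\to 0$. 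Consequently $R_\kappa^{2/(2-\gamma)}\to v^*_1$, and raising to the power $(2-\gamma)/2=1-\gamma/2$ gives $R_\kappa\to(v^*_1)^{1-\gamma/2}=R_\infty$; the explicit value follows by substituting the formula for $v^*_1$ and noting the exponents multiply to $1/2$.

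The argument is short precisely because the hard analysis---the convergence $\mb v(\tau)\to\mb v^*$ at an algebraic rate---is already packaged in Corollary \ref{Large central density limit}. The only point requiring care, and the one I expect to be the main obstacle, is the divergence $\rho^{-1}(1/\kappa)\to\infty$: it is what forces the truncation radius $\kappa^{1-\gamma/2}R_\kappa$ out to the scale where the self-similar asymptotics govern the profile, and it rests on the infinite-support dichotomy valid exactly in the regime $\gamma<2d/(d+2)$. Were the support finite, the truncation radius would stay bounded, the error term would not decay, and the limiting value $R_\infty$ would not be attained.
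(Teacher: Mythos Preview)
Your proof is correct and essentially the same as the paper's: both evaluate the tail-behaviour estimate at the truncation radius, use $\rho_\kappa(R_\kappa)=1$ (equivalently $\rho(\rho^{-1}(1/\kappa))=1/\kappa$) to reduce $r^{2/(2-\gamma)}\rho_\kappa(r)$ to $R_\kappa^{2/(2-\gamma)}$, and then let $\rho^{-1}(1/\kappa)\to\infty$ kill the error term. The only cosmetic difference is that the paper plugs $r=\rho^{-1}(1/\kappa)$ into the tail theorem for the base profile $\rho$, whereas you plug $r=R_\kappa$ into Corollary~\ref{Large central density limit} for $\rho_\kappa$; since the latter is just the scaled restatement of the former, the two computations are identical, and you are in fact more explicit than the paper in justifying $\rho^{-1}(1/\kappa)\to\infty$ via the infinite-support dichotomy.
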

\begin{proof}
From the last theorem, we have
\begin{align*}
\abs{\rho^{-1}\brac{1\over\kappa}^{2\over 2-\gamma}{1\over\kappa}-v^*_1}\lesssim\rho^{-1}\brac{1\over\kappa}^{-c}\to 0\qquad\text{as}\qquad\kappa\to\infty.
\end{align*}
So we have
\begin{align*}
R_\kappa&=\kappa^{-(1-\gamma/2)}\rho^{-1}(1/\kappa)=\brac{\rho^{-1}\brac{1\over\kappa}^{2\over 2-\gamma}{1\over\kappa}}^{1-\gamma/2}
\to(v^*_1)^{1-\gamma/2}\qquad\text{as}\qquad\kappa\to\infty.\qed
\end{align*}
\end{proof}

\section{Linear stability for radially symmetric perturbations}

\subsection{Equations with spherical symmetry}

Assuming spherical symmetry, so that $\mb u(r,t)=u(r,t)\hat{\mb r}$, the continuity equation $\partial_t\rho+\grad\cdot(\rho\mb u)=0$ becomes
\begin{align}
0&=\partial_t\rho+\grad\cdot(\rho\mb u)=\partial_t\rho+\rho u\grad\cdot\hat{\mb r}+\hat{\mb r}\cdot\grad(\rho u)=\partial_t\rho+(d-1)\rho u{1\over r}+\hat{\mb r}\cdot\partial_r(\rho u)\grad r\nonumber\\
&=\partial_t\rho+(d-1)\rho u{1\over r}+\partial_r(\rho u)=D_t\rho+\rho\brac{(d-1)u{1\over r}+\partial_ru}
=D_t\rho+\rho{\partial_r(r^{d-1}u)\over r^{d-1}}\label{continuity equation 3}.
\end{align}
The momentum equation (\ref{momentum equation 2}) reads
\begin{align*}
\mb 0&=\rho{D\mb u\over Dt}+\grad p+\rho\grad\phi
=\rho(\partial_tu\hat{\mb r}+(u\hat{\mb r}\cdot\grad)(u\hat{\mb r}))+(\partial_rp)\hat{\mb r}+\rho(\partial_r\phi)\hat{\mb r}\\
&=\rho(\partial_tu\hat{\mb r}+u\partial_r(u\hat{\mb r}))+(\partial_rp)\hat{\mb r}+\rho(\partial_r\phi)\hat{\mb r}\\
&=\brac{\rho(\partial_tu+u\partial_ru+\partial_r\phi)+\partial_rp}\hat{\mb r}
\end{align*}
So the momentum equation is
\[D_tu+\partial_r\phi+{1\over\rho}\partial_rp=\partial_tu+u\partial_ru+\partial_r\phi+{1\over\rho}\partial_rp=0.\]
The Poisson equation reads
\[4\pi\rho=\lpc\phi=\grad\cdot((\partial_r\phi)\hat{\mb r})=(\partial_r\phi){d-1\over r}+\partial_r^2\phi={1\over r^{d-1}}\partial_r(r^{d-1}\partial_r\phi)).\]
We can put this into the momentum equation to get
\begin{align}
\partial_tu+u\partial_ru+{4\pi\over r^{d-1}}\int_0^rs^{d-1}\rho(s)ds+{1\over\rho}\partial_rp=0.\label{momentum equation 3}
\end{align}

\subsection{Equations in Lagrangian coordinates}

Let $\eta(y,t)$ be the (radial) location of the fluid particle that was at $\eta_0(y)$ at time 0. $\eta$ is given by
\[\partial_t\eta=u\circ\eta\qquad\text{with}\qquad\eta(y,0)=\eta_0(y)\]
where $u\circ\eta(y,t)=u(\eta(y,t),t)$. The spacial domain is then fixed for all time as $[0,R]:=\eta_0^{-1}(\{r:r\hat{\mb r}\in\Omega_0\})$. We then have the Lagrangian variables
\begin{align*}
v&=u\circ\eta\hfill\tag{Lagrangian velocity}\\
f&=\rho\circ\eta\hfill\tag{Lagrangian density}\\
\psi&=\phi\circ\eta\hfill\tag{Lagrangian potential}
\end{align*}
W have for any $h$,
\[\partial_y(h\circ\eta)=((\partial_rh)\circ\eta)\partial_y\eta\qquad\text{ and so }\qquad(\partial_rh)\circ\eta=(\partial_y\eta)^{-1}\partial_y(h\circ\eta).\]
\[\partial_t(h\circ\eta)=(\partial_th)\circ\eta+((\partial_rh)\circ\eta)\partial_t\eta=(D_th)\circ\eta.\]

Let
\[J={\eta^{d-1}\over y^{d-1}}\partial_y\eta={1\over dy^{d-1}}\partial_y(\eta^d).\]
Then
\begin{align*}
\partial_tJ&={1\over dy^{d-1}}\partial_y(d\eta^{d-1}\partial_t\eta)
={1\over y^{d-1}}\brac{(d-1)\eta^{d-2}v\partial_y\eta+\eta^{d-1}\partial_yv}\\
&={1\over y^{d-1}}\brac{(d-1)\eta^{d-2}v\partial_y\eta+\eta^{d-1}((\partial_ru)\circ\eta)\partial_y\eta}
=(d-1)J{v\over\eta}+J((\partial_ru)\circ\eta)\\
&=J\brac{\partial_r(r^{d-1}u)\over r^{d-1}}\circ\eta
\end{align*}
So the continuity equation (\ref{continuity equation 3}) in Lagrangian is
\[0=\partial_tf+f{\partial_tJ\over J}\qquad\text{ and so }\qquad\partial_t\ln(fJ)=0\qquad\text{ and so }\qquad fJ=f_0J_0.\]
The Poisson equation is
\[4\pi f={1\over\eta^{d-1}\partial_y\eta}\partial_y\brac{{\eta^{d-1}\over\partial_y\eta}\partial_y\psi}.\]
And so
\[{1\over\partial_y\eta}\partial_y\psi={4\pi\over d\eta^{d-1}}\int^y_0f(s)\partial_y\eta^d(s)ds={4\pi\over\eta^{d-1}}\int^y_0s^{d-1}(f_0J_0)(s)ds.\]
And the momentum equation (\ref{momentum equation 3}) is
\begin{align*}
0&=\partial_tv+{1\over\partial_y\eta}\partial_y\psi+{1\over f\partial_y\eta}\partial_yf^\gamma\\
&=\partial_t^2\eta+{4\pi\over\eta^{d-1}}\int^y_0s^{d-1}(f_0J_0)(s)ds+{1\over f_0J_0}{\eta^{d-1}\over y^{d-1}}\partial_y\brac{f_0J_0{dy^{d-1}\over\partial_y\eta^d}}^\gamma
\end{align*}

\subsection{The eigenvalue problem for the linear stability of steady states}

Let $\bar\rho$ be a steady state solution. For a small perturbation $\rho_0=\bar\rho+\epsilon$, we can pick $\eta_0$ such that $f_0J_0=\bar\rho$. And a small perturbation $u_0=\upsilon$ means $\partial_t\eta(y,0)=\upsilon\circ\eta_0(y,0)$. Then our equation becomes
\begin{align}
\partial_t^2\eta+{4\pi\over\eta^{d-1}}\int^y_0s^{d-1}\bar\rho(s)ds+{1\over\bar\rho}{\eta^{d-1}\over y^{d-1}}\partial_y\brac{\bar\rho{dy^{d-1}\over\partial_y\eta^d}}^\gamma=0.\label{momentum equation 4}
\end{align}
Since $\bar\rho$ is a steady state solution, we have
\[{4\pi\over y^{d-1}}\int^y_0s^{d-1}\bar\rho(s)ds+{1\over\bar\rho}\partial_y\bar\rho^\gamma=0.\]

To study linear stability, we linearised the equation about the steady state as follows.

\begin{proposition}[Linearised momentum equation for perturbation]
Let $\eta(y,t)=y$ correspond to the steady state solution $\rho=\bar\rho$. Consider a perturbation of this given by $\eta(y,t)=y(1+\zeta(y,t))$. Then the perturbation variable $\zeta(y,t)$ satisfies, to first order,
\begin{align*}
y\partial_t^2\zeta+{2(d-1)\over\bar\rho}\zeta\partial_y\bar\rho^\gamma-\gamma{1\over\bar\rho}\partial_y\brac{\bar\rho^{\gamma}(d\zeta+y\partial_y\zeta)}&=0\\
d\zeta(R,t)+R\partial_y\zeta(R,t)&=0.
\end{align*}
Furthermore, if $\zeta(y,t)=e^{\lambda t}\chi(y)$, then $\chi$ satisfies
\begin{align}
\underbrace{-\gamma\partial_y(\bar\rho^\gamma y^{d+1}\partial_y\chi)+(2(d-1)-d\gamma)y^d\chi\partial_y\bar\rho^\gamma}_{:=L\chi}&=-\lambda^2y^{d+1}\bar\rho\chi\label{SLP}\\
d\chi(R)+R\partial_y\chi(R)&=0\label{RBC}.
\end{align}
\end{proposition}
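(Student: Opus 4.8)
The plan is to substitute the ansatz $\eta(y,t)=y(1+\zeta(y,t))$ directly into the nonlinear Lagrangian momentum equation (\ref{momentum equation 4}) and keep only terms linear in $\zeta$. First I would record the elementary expansions that feed every term: $\partial_t^2\eta=y\partial_t^2\zeta$ holds exactly; $\eta^{d-1}/y^{d-1}=(1+\zeta)^{d-1}=1+(d-1)\zeta+O(\zeta^2)$; and, writing $J=\eta^{d-1}\partial_y\eta/y^{d-1}$, one gets $J=(1+\zeta)^{d-1}(1+\zeta+y\partial_y\zeta)=1+d\zeta+y\partial_y\zeta+O(\zeta^2)$. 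Since $\bar\rho\,dy^{d-1}/\partial_y(\eta^d)=\bar\rho/J$, this yields $(\bar\rho/J)^\gamma=\bar\rho^\gamma\bigl(1-\gamma(d\zeta+y\partial_y\zeta)\bigr)+O(\zeta^2)$, so inside the pressure term the quantity $\bar\rho^\gamma$ picks up the first-order correction $-\gamma\bar\rho^\gamma(d\zeta+y\partial_y\zeta)$.

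Next I would assemble the three terms. In the gravitational term $4\pi\eta^{1-d}\int_0^y s^{d-1}\bar\rho\,ds$ the prefactor expands as $y^{1-d}\bigl(1-(d-1)\zeta\bigr)$, and in the pressure term $\bar\rho^{-1}(\eta^{d-1}/y^{d-1})\partial_y(\bar\rho/J)^\gamma$ the prefactor expands as $\bar\rho^{-1}\bigl(1+(d-1)\zeta\bigr)$. The crucial cancellation occurs at zeroth order: the steady state relation $4\pi y^{1-d}\int_0^y s^{d-1}\bar\rho\,ds=-\bar\rho^{-1}\partial_y\bar\rho^\gamma$ annihilates the $O(\zeta^0)$ parts, and applied to the $O(\zeta)$ pieces of the two prefactors it turns each of them into $(d-1)\bar\rho^{-1}\zeta\,\partial_y\bar\rho^\gamma$, summing to $2(d-1)\bar\rho^{-1}\zeta\,\partial_y\bar\rho^\gamma$. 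The only other first-order contribution is $-\gamma\bar\rho^{-1}\partial_y\bigl(\bar\rho^\gamma(d\zeta+y\partial_y\zeta)\bigr)$, coming from perturbing $\bar\rho^\gamma$ inside the derivative. Collecting these yields the first displayed equation.

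For the boundary condition I would use that the liquid free-surface condition is $p=0$, i.e.\ $f^\gamma=1$, on the moving boundary; since $f=\bar\rho/J$ and $\bar\rho(R)=1$, this is exactly $J(R,t)=1$, and linearizing $J=1+d\zeta+y\partial_y\zeta$ at $y=R$ gives $d\zeta(R,t)+R\partial_y\zeta(R,t)=0$. Finally, to reach the eigenvalue problem I would substitute $\zeta=e^{\lambda t}\chi(y)$, so that $\partial_t^2\zeta=\lambda^2 e^{\lambda t}\chi$, multiply the linearized equation through by $\bar\rho\,y^d$, and rewrite the divergence term via the product-rule identity $y^d\partial_y\bigl(\bar\rho^\gamma(d\chi+y\partial_y\chi)\bigr)=\partial_y\bigl(\bar\rho^\gamma y^{d+1}\partial_y\chi\bigr)+d\,y^d\chi\,\partial_y\bar\rho^\gamma$. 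The extra term $d\,y^d\chi\,\partial_y\bar\rho^\gamma$ then merges with the $2(d-1)$ term to produce the coefficient $2(d-1)-d\gamma$, giving $L\chi=-\lambda^2 y^{d+1}\bar\rho\chi$ together with $d\chi(R)+R\partial_y\chi(R)=0$.

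The routine but error-prone part will be the first-order bookkeeping in the second step, in particular checking that the two prefactor expansions each contribute the same coefficient $(d-1)$ so that they add to $2(d-1)$, and confirming that the zeroth-order terms cancel exactly by the steady state equation; everything after that is an algebraic rearrangement into Sturm--Liouville form.
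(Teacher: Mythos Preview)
Your proposal is correct and follows essentially the same route as the paper: the same first-order expansions of $\eta^{d-1}$, $J$, and $(\bar\rho/J)^\gamma$, the same use of the steady-state identity to cancel the zeroth-order terms and combine the two $(d-1)$ contributions, and the same derivation of the boundary condition from $J(R,t)=1$. Your product-rule identity $y^d\partial_y\bigl(\bar\rho^\gamma(d\chi+y\partial_y\chi)\bigr)=\partial_y(\bar\rho^\gamma y^{d+1}\partial_y\chi)+d\,y^d\chi\,\partial_y\bar\rho^\gamma$ is a slightly more direct route to the Sturm--Liouville form than the paper's two-step expansion, but it is the same computation.
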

\begin{proof}
We have $\eta(y,t)=y(1+\zeta(y,t))$, so
\[\partial_y\eta=1+\zeta+y\partial_y\zeta.\]
Assuming $\zeta$ and $\partial_y\zeta$ is small, we have
\begin{align*}
\eta^{d-1}&=y^{d-1}(1+(d-1)\zeta+o(\zeta))\\
\eta^{-(d-1)}&=y^{-(d-1)}(1-(d-1)\zeta+o(\zeta))\\
\partial_y\eta^d&=d\eta^{d-1}\partial_y\eta=dy^{d-1}(1+(d-1)\zeta+o(\zeta))(1+\zeta+y\partial_y\zeta)\\
&=dy^{d-1}(1+d\zeta+y\partial_y\zeta+o(|\zeta|+|\partial_y\zeta|))\\
(\partial_y\eta^d)^{-\gamma}&=(dy^{d-1})^{-\gamma}\brac{1-\gamma(d\zeta+y\partial_y\zeta)+o(|\zeta|+|\partial_y\zeta|)}
\end{align*}
So the momentum equation (\ref{momentum equation 4}) is
\[y\partial_t^2\zeta+{4\pi\over y^{d-1}}(1-(d-1)\zeta)\int^y_0s^{d-1}\bar\rho(s)ds+{1\over\bar\rho}(1+(d-1)\zeta)\partial_y\brac{\bar\rho^\gamma(1-\gamma(d\zeta+y\partial_y\zeta))}+o(|\zeta|+|\partial_y\zeta|)=0.\]
Discarding non-linear terms and simplify we get the linearised momentum equation
\begin{align*}
0
&=y\partial_t^2\zeta+{2(d-1)\over\bar\rho}\zeta\partial_y\bar\rho^\gamma-\gamma{1\over\bar\rho}\partial_y\brac{\bar\rho^{\gamma}(d\zeta+y\partial_y\zeta)}.
\end{align*}
For solutions of the form $\zeta(y,t)=e^{\lambda t}\chi(y)$, we have that $\chi$ satisfies
\begin{align*}
0&=\lambda^2y\bar\rho\chi+2(d-1)\chi\partial_y\bar\rho^\gamma-\gamma\partial_y\brac{\rho^{\gamma}(d\chi+y\partial_y\chi)}\\
&=\lambda^2y\bar\rho\chi+(2(d-1)-d\gamma)\chi\partial_y\bar\rho^\gamma-\gamma y(\partial_y\chi)\partial_y\rho^\gamma-\gamma\rho^\gamma\partial_y(d\chi+y\partial_y\chi)\\
&=\lambda^2y\bar\rho\chi+(2(d-1)-d\gamma)\chi\partial_y\bar\rho^\gamma-{\gamma\over y^d}\partial_y(\bar\rho^\gamma y^{d+1}\partial_y\chi)
\end{align*}
So we want to solve a Sturm-Liouville type equation
\[\lambda^2y^{d+1}\bar\rho\chi=\gamma\partial_y(\bar\rho^\gamma y^{d+1}\partial_y\chi)-(2(d-1)-d\gamma)y^d\chi\partial_y\bar\rho^\gamma=:-L\chi\]
Since $fJ=f_0J_0=\bar\rho$ and $f(R)=\bar\rho(R)=1$, we have $J(R)=1$. Now
\[J={\eta^{d-1}\over y^{d-1}}\partial_y\eta=(1+\zeta)^{d-1}\partial_y(y(1+\zeta))=(1+\zeta)^{d-1}(1+\zeta+y\partial_y\zeta)=1+d\zeta+y\partial_y\zeta+o(|\zeta|+|\partial_y\zeta|).\]
Discarding the non-linear terms and evaluating at $R$ we get a Robin type boundary condition $d\zeta(R)+R\partial_y\zeta(R)=0$. In terms of $\chi$ this condition reads
\[d\chi(R)+R\partial_y\chi(R)=0.\qed\]
\end{proof}

In this paper, we say the system is linearly unstable to mean that the linearised equation admits an growing mode solution of the form $\zeta(y,t)=e^{\lambda t}\chi(y)$ with $\lambda>0$. Otherwise we call the system linearly stable.

Given $\chi_1,\chi_2\in C^2([0,R])$ satisfying the boundary condition (\ref{RBC}), under the usual $L^2$ inner product, we have using integration by parts
\begin{align*}
\<L\chi_1,\chi_2\>&=\int_0^R\gamma\bar\rho^\gamma y^{d+1}(\partial_y\chi_1)(\partial_y\chi_2)+(2(d-1)-d\gamma)y^d\chi_1\chi_2\partial_y\bar\rho^\gamma dy\\
&\quad-\gamma\bar\rho(R)^\gamma R^{d+1}\chi_2(R)\partial_y\chi_1(R)\\
&=-\int_0^R\chi_1\gamma\partial_y(\bar\rho^\gamma y^{d+1}\partial_y\chi_2)-(2(d-1)-d\gamma)y^d\chi_1\chi_2\partial_y\bar\rho^\gamma dy\\
&\quad-\gamma\bar\rho(R)^\gamma R^{d+1}(\chi_2(R)\partial_y\chi_1(R)-\chi_1(R)\partial_y\chi_2(R))\\
&=\<\chi_1,L\chi_2\>
\end{align*}
So $L$ is symmetric under $C^2([0,R])$ functions satisfying the (\ref{RBC}). Note that in particular
\begin{align}
\<L\chi,\chi\>=\int_0^R\gamma\bar\rho^\gamma y^{d+1}(\partial_y\chi)^2+(2(d-1)-d\gamma)y^d\chi^2\partial_y\bar\rho^\gamma dy+d\gamma R^{d}\chi(R)^2.\label{TE}
\end{align}
Define the bilinear form
\begin{align*}
Q[\chi_1,\chi_2]=\int_0^R\gamma\bar\rho^\gamma y^{d+1}(\partial_y\chi_1)(\partial_y\chi_2)+(2(d-1)-d\gamma)y^d\chi_1\chi_2\partial_y\bar\rho^\gamma dy+d\gamma R^{d}\chi_1(R)\chi_2(R).
\end{align*}
Note that this equation is well defined for spherically symmetric functions in $H^1(B_{R}(\R^{d+2}))$, where we consider $y$ the radial variable, because 1) the trace theorem for Sobolev space which meant that $\chi(R)$ is well defined; and 2) the fact that $\partial_y\bar\rho^\gamma\sim y$ which means the the integral was weighted by $\sim y^{d+1}$.

We want to solve $L\chi=-\lambda^2y^{d+1}\bar\rho\chi$ on $[0,R]$ with the boundary condition $d\chi(R)+R\partial_y\chi(R)=0$. If there exist a negative eigenvalue $\mu=-\lambda^2<0$, the we found a growing mode of the original linearised problem, so the system is unstable. So we want to find the smallest eigenvalue $\mu$. The following lemma help to make a criterion for finding $\mu$ in terms of the quadratic form $Q$.

\begin{lemma}
Let $H^1_r(B_{R}(\R^{d+2}))$ denote the subspace of spherically symmetric functions in $H^1(B_{R}(\R^{d+2}))$. We consider functions in $H^1_r(B_{R}(\R^{d+2}))$ to be functions of one variable defined by radial distance $y\in[0,R]$. In this space we have
\[\inf_{\|\chi\|_{y^{d+1}\bar\rho}=1}\<L\chi,\chi\>_{L^2([0,R])}=:\mu_*=\inf\{\mu:\exists\chi\not=0\st L\chi=\mu y^{d+1}\bar\rho\chi\}\]
where $\|\chi\|_{w}^2=\<\chi,w\chi\>_{L^2([0,R])}$. Moreover, the infimum is attained by some $\chi_*\in H^1_r(\R^{d+2})$ which is an eigenfunction of $L$ with eigenvalue $\mu_*$.
\end{lemma}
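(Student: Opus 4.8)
The plan is to solve this as a constrained minimization by the direct method of the calculus of variations, working in the Hilbert space $H:=H^1_r(B_R(\R^{d+2}))$ exactly as the remark preceding the statement suggests: the weight $y^{d+1}$ is the radial volume element of $\R^{d+2}$, so the bilinear form $Q$ is the realization on all of $H$ of the pairing $\langle L\cdot,\cdot\rangle$, the two agreeing on $C^2$ functions that satisfy (\ref{RBC}) by (\ref{TE}). I would therefore set $\mu_*:=\inf\{Q[\chi,\chi]:\chi\in H,\ \|\chi\|_{y^{d+1}\bar\rho}=1\}$. Two standing facts will be used repeatedly: for a liquid star one has $1\le\bar\rho\le\rho(0)$ on $[0,R]$ with $\bar\rho(R)=1$, so $\|\cdot\|_{y^{d+1}\bar\rho}$ is equivalent to the standard $L^2(B_R(\R^{d+2}))$ norm and the principal coefficient $\bar\rho^\gamma$ is comparable to $1$; and since $\partial_y\bar\rho^\gamma=-\bar\rho\,\partial_y\phi$ with $\partial_y\phi\sim y$ near the origin, the potential coefficient satisfies $|y^d\partial_y\bar\rho^\gamma|\lesssim y^{d+1}$.

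First I would prove a coercivity (G\aa rding) estimate. The bound $|y^d\partial_y\bar\rho^\gamma|\lesssim y^{d+1}$ makes the middle term of $Q$ a lower-order perturbation controlled by $\|\chi\|^2_{y^{d+1}\bar\rho}$, while the boundary term $d\gamma R^d\chi(R)^2$ is nonnegative; hence
\[Q[\chi,\chi]\ \ge\ c\,\|\partial_y\chi\|^2_{y^{d+1}}-C\,\|\chi\|^2_{y^{d+1}\bar\rho}\]
for some $c>0$ and $C\ge0$. In particular $\mu_*\ge -C>-\infty$, and any minimizing sequence $\chi_n$ with $\|\chi_n\|_{y^{d+1}\bar\rho}=1$ and $Q[\chi_n,\chi_n]\to\mu_*$ is bounded in $H$.

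Next I would extract $\chi_n\rightharpoonup\chi_*$ weakly in $H$. The Rellich--Kondrachov theorem in $\R^{d+2}$ gives a compact embedding $H\hookrightarrow L^2(B_R(\R^{d+2}))$, so $\chi_n\to\chi_*$ strongly in the weighted $L^2$; this preserves the constraint ($\|\chi_*\|_{y^{d+1}\bar\rho}=1$, so $\chi_*\neq0$) and lets the lower-order integral pass to the limit. Compactness of the trace operator $H\to L^2(\partial B_R)$ carries the boundary term to the limit, and the gradient term is weakly lower semicontinuous as a convex, coercive quadratic. Therefore $Q[\chi_*,\chi_*]\le\liminf_n Q[\chi_n,\chi_n]=\mu_*$, and admissibility of $\chi_*$ forces equality, so $\chi_*$ attains the infimum.

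Finally I would read off the eigenvalue equation and match the two infima. The first variation yields $Q[\chi_*,\varphi]=\mu_*\langle\chi_*,\varphi\rangle_{y^{d+1}\bar\rho}$ for all $\varphi\in H$; testing with $\varphi$ supported in $(0,R)$ gives $L\chi_*=\mu_* y^{d+1}\bar\rho\chi_*$ in the interior (interior elliptic regularity upgrading $\chi_*$ to a classical solution on $(0,R]$, the origin being handled by the radial interpretation in $\R^{d+2}$), and testing with general $\varphi$ forces the leftover boundary term $\gamma\bar\rho(R)^\gamma R^{d+1}\partial_y\chi_*(R)+d\gamma R^d\chi_*(R)=0$, which, since $\bar\rho(R)=1$, is exactly (\ref{RBC}). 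Thus $\mu_*$ is an eigenvalue with eigenfunction $\chi_*$. Conversely, any eigenpair $(\mu,\chi)$ with $\chi\neq0$ satisfying (\ref{RBC}) obeys $Q[\chi,\chi]=\langle L\chi,\chi\rangle=\mu\|\chi\|^2_{y^{d+1}\bar\rho}$, whence $\mu\ge\mu_*$; since $\mu_*$ is itself attained as an eigenvalue, the two characterizations coincide and the infimum is attained, as claimed. The one genuinely delicate point is the degeneracy of all the weights at $y=0$: the reinterpretation in $\R^{d+2}$ is precisely what reduces the weighted compactness, trace, and lower-semicontinuity facts to their standard Euclidean counterparts, and it relies on $\bar\rho$ being bounded above and below (valid for liquid stars) together with $y^d\partial_y\bar\rho^\gamma\sim y^{d+1}$, so that the zeroth-order term is truly subordinate to the principal part.
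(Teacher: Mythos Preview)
Your proof is correct and follows essentially the same direct-method approach as the paper: bound a minimizing sequence in $H^1_r(B_R(\R^{d+2}))$ via the G\aa rding-type estimate, extract a weak limit, use Rellich--Kondrachov and trace continuity to pass to the limit in the lower-order and boundary terms, use weak lower semicontinuity for the gradient term, and then take the first variation plus elliptic regularity to obtain the eigenfunction. Your write-up is in places slightly more explicit than the paper's (you spell out the G\aa rding inequality, invoke compactness rather than mere weak continuity of the trace, and explicitly recover the Robin condition (\ref{RBC}) from the boundary term in the variation), but the underlying argument is the same.
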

\begin{proof}
It is clear that
\[\mu_*=\inf_{\|\chi\|_{y^{d+1}\bar\rho}=1}\<L\chi,\chi\>\leq\inf\{\mu:\exists\chi\not=0\st L\chi=\mu y^{d+1}\bar\rho\chi\}.\]
To prove equality, it suffice then to prove that $\mu_*$ is an eigenvalue of $L$. 

Pick $\chi_n$ with $\|\chi_n\|_{y^{d+1}\bar\rho}=1$ such that $\<L\chi_n,\chi_n\>\to\inf_{\|\chi\|_{y^{d+1}\bar\rho}=1}\<L\chi,\chi\>$. Since
\[\int_0^R y^d\chi_n^2\partial_y\bar\rho^\gamma dy\sim\|\chi_n\|_{y^{d+1}\bar\rho}=1,\]
and the first and last term in (\ref{TE}) are positive, $\inf_{\|\chi\|_{y^{d+1}\bar\rho}=1}\<L\chi,\chi\>$ is finite. Now
\begin{align*}
\|\partial_y\chi_n\|^2_{L^2(B_R(\R^{d+2}))}
&\lesssim\int_0^R\gamma\bar\rho^\gamma y^{d+1}(\partial_y\chi_n)^2dy+d\gamma R^{d}\chi_n(R)^2\\
&=\<L\chi_n,\chi_n\>-(2(d-1)-d\gamma)\int_0^Ry^d\chi^2\partial_y\bar\rho^\gamma dy\\
&\lesssim|\<L\chi_n,\chi_n\>|+\|\chi_n\|_{y^{d+1}\bar\rho}.
\end{align*}
And obviously
\[\|\chi_n\|_{L^2(B_R(\R^{d+2}))}\lesssim\|\chi_n\|_{y^{d+1}\bar\rho}.\]
Hence $\chi_n$ is bounded in $H^1(B_R(\R^{d+2}))$. Wlog, picking an appropriate subsequence, we can assume $\chi_n$ converge weakly to some $\chi_*$. By the Rellich-Kondrachov theorem, $\chi_n\to\chi$ in $L^2(B_R(\R^{d+2}))$. It follows that $\|\chi_*\|_{y^{d+1}\bar\rho}=1$ By the lower semi-continuity of weak convergence, we have $\liminf\|\chi_n\|_{H^1(B_R(\E^{d+2}))}\geq\|\chi_*\|$. Since $\|\chi_n\|_{L^2(B_R(\E^{d+2}))}\to\|\chi_*\|_{L^2(B_R(\E^{d+2}))}$, we must have
\[\liminf\|\partial_y\chi_n\|_{L^2(B_R(\E^{d+2}))}^2\geq\|\partial_y\chi_*\|_{L^2(B_R(\E^{d+2}))}^2.\]
Since $\|\ph\|_{y^{d+1}\bar\rho}$ is an (equivalent) norm for $L^2(B_R(\E^{d+2}))$, we have
\[\liminf\int_0^R\gamma\bar\rho^\gamma y^{d+1}(\partial_y\chi_n)^2dy\geq\int_0^R\gamma\bar\rho^\gamma y^{d+1}(\partial_y\chi_*)^2dy.\]
Since the trace operator $T$ is continuous and linear, we also have $T\chi_n\rightharpoonup T\chi_*$ and so by the lower semi-continuity of weak convergence, $\liminf\chi_n(R)^2\geq\chi_*(R)^2$. It follows that $\<L\chi_*,\chi_*\>\leq\inf_{\|\chi\|_{y^{d+1}\bar\rho}=1}\<L\chi,\chi\>$, and that means we must have equality and the infimum is attained.

For any $f$ we must have
\begin{align*}
0&={d\over d\epsilon}\eva{\brac{Q[\chi_*+\epsilon f,\chi_*+\epsilon f]\over\<\chi_*+\epsilon f,\chi_*+\epsilon f\>_{y^{d+1}\bar\rho}}}_{\epsilon=0}
={d\over d\epsilon}\eva{\brac{Q[\chi_*,\chi_*]+2\epsilon Q[\chi_*,f]+\epsilon^2Q[f,f]\over\<\chi_*,\chi_*\>_{y^{d+1}\bar\rho}+2\epsilon\<\chi_*,f\>_{y^{d+1}\bar\rho}+\epsilon^2\<f,f\>_{y^{d+1}\bar\rho}}}_{\epsilon=0}\\
&={2Q[\chi_*,f]\over\<\chi_*,\chi_*\>_{y^{d+1}\bar\rho}}-{2Q[\chi_*,\chi_*]\<\chi_*,f\>_{y^{d+1}\bar\rho}\over\<\chi_*,\chi_*\>_{y^{d+1}\bar\rho}^2}
\end{align*}
and so $Q[\chi_*,f]=\mu_*\<\chi_*,f\>_{y^{d+1}\bar\rho}$. Hence $\chi_*$ is a weak solution to $L\chi=\mu_*\chi$. By elliptic regularity, $\chi_*$ is smooth on $(0,R]$, and so the weak solution is in fact a classical solution. Therefore $\chi_*$ is in fact an eigenfunction of $L$ with eigenvalue $\mu_*$.\qed
\end{proof}

From this we get the linear stability criterion in terms of whether the quadratic form $Q$ is non-negative definite.

\begin{proposition}
If $\<L\chi,\chi\>\geq 0$ for all $\chi$, then the corresponding liquid Lane-Emden star is linearly stable under radial perturbations. And if there exist $\chi$ such that $\<L\chi,\chi\><0$, then it must be linearly unstable.
\end{proposition}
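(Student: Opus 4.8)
The plan is to reduce both assertions to the variational characterization of the smallest eigenvalue $\mu_*$ from the previous lemma. The crucial point is that, for the linearised problem, \emph{a growing mode and a negative eigenvalue are the same object}. Indeed, inserting the ansatz $\zeta(y,t)=e^{\lambda t}\chi(y)$ into the linearised momentum equation forces $\chi$ to solve (\ref{SLP}) subject to the boundary condition (\ref{RBC}), i.e. $L\chi=\mu\,y^{d+1}\bar\rho\chi$ with $\mu=-\lambda^2$. Thus a growing mode, meaning $\lambda>0$, is exactly an eigenfunction of the generalized eigenvalue problem with eigenvalue $\mu<0$, and conversely any eigenvalue $\mu<0$ produces a growing mode with $\lambda=\sqrt{-\mu}$. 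Consequently the star is linearly unstable if and only if the generalized eigenvalue problem admits a negative eigenvalue.

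Next I would invoke the previous lemma, which gives
\[\mu_*=\inf_{\|\chi\|_{y^{d+1}\bar\rho}=1}\langle L\chi,\chi\rangle=\inf\{\mu:\exists\,\chi\neq 0\ \text{with}\ L\chi=\mu\,y^{d+1}\bar\rho\chi\},\]
with the infimum attained by a genuine eigenfunction. Since $\chi\mapsto\langle L\chi,\chi\rangle$ is homogeneous of degree two, rescaling shows that $\langle L\chi,\chi\rangle\geq 0$ for every $\chi$ is equivalent to $\mu_*\geq 0$, whereas the existence of a single $\chi$ with $\langle L\chi,\chi\rangle<0$ is equivalent to $\mu_*<0$.

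Combining these two facts yields both halves of the proposition simultaneously. If $\langle L\chi,\chi\rangle\geq 0$ for all $\chi$, then $\mu_*\geq 0$; as $\mu_*$ is the least eigenvalue, there is no negative eigenvalue, hence no growing mode, and the star is linearly stable. If instead some $\chi$ gives $\langle L\chi,\chi\rangle<0$, then $\mu_*<0$ is an eigenvalue that is actually attained, so $\lambda=\sqrt{-\mu_*}>0$ furnishes a growing mode and the star is linearly unstable.

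Given the previous lemma this is essentially a dictionary translation, so I do not expect a substantive obstacle. The two points that warrant care are: first, checking that a growing mode is genuinely equivalent to (not merely sufficient for) a negative eigenvalue, so that the stable direction really does follow from the nonexistence of negative eigenvalues; and second, confirming that the minimizer $\chi_*$ supplied by the lemma satisfies the Robin condition (\ref{RBC}). The latter is automatic, since the boundary term $d\gamma R^d\chi(R)^2$ appearing in $Q$ was generated precisely by integrating (\ref{SLP}) by parts against that boundary condition.
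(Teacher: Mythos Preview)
Your proof is correct and follows essentially the same route as the paper: invoke the previous lemma's variational characterization to equate the sign of $\mu_*$ with the sign of $\langle L\chi,\chi\rangle$, and then translate negative eigenvalues into growing modes via $\lambda=\sqrt{-\mu}$. Your write-up is in fact more careful than the paper's in spelling out the equivalence between growing modes and negative eigenvalues.
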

\begin{proof}
If there exist $\chi$ such that $\<L\chi,\chi\><0$, then by the previous lemma there exist $-\mu<0$ and $\chi_*$ such that $L\chi_*=-\mu y^{d+1}\bar\rho\chi_*$. This, by the last proposition, means the linearised momentum equation admits a solution of the form $\zeta(y,t)=e^{\sqrt\mu t}\chi_*(y)$. This grows exponentially in time, and hence the corresponding liquid Lane-Emden star is linearly unstable. Conversely, if $\<L\chi,\chi\>\geq 0$ for all $\chi$, then no such growing solutions exist and hence the corresponding liquid Lane-Emden star is linearly stable under radial perturbations.\qed
\end{proof}

\subsection{(In)stability results}

We will now prove our main theorem \ref{liquid Lane–Emden stars stability} on the (in)stability results for liquid Lane–Emden stars. This will be split into three theorems below, that together will established theorem \ref{liquid Lane–Emden stars stability}.

\begin{theorem}
The liquid Lane–Emden stars is, against radial perturbations, linearly stable when $\gamma\geq 2(d-1)/d$.
\end{theorem}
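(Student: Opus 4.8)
The plan is to invoke the variational stability criterion established just above, which reduces linear stability to showing that the quadratic form $\<L\chi,\chi\>$ is non-negative for every admissible $\chi$. Since we already have the explicit expression (\ref{TE}),
\[
\<L\chi,\chi\>=\int_0^R\gamma\bar\rho^\gamma y^{d+1}(\partial_y\chi)^2+(2(d-1)-d\gamma)y^d\chi^2\partial_y\bar\rho^\gamma\,dy+d\gamma R^{d}\chi(R)^2,
\]
it suffices to verify that, under the hypothesis $\gamma\geq 2(d-1)/d$, each of the three contributions is non-negative.

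The first integrand $\gamma\bar\rho^\gamma y^{d+1}(\partial_y\chi)^2$ and the boundary term $d\gamma R^{d}\chi(R)^2$ are manifestly non-negative. For the middle term I would argue by a sign count. The hypothesis $\gamma\geq 2(d-1)/d$ is exactly equivalent to $2(d-1)-d\gamma\leq 0$, so the prefactor is non-positive. On the other hand, the steady-state density $\bar\rho$ is monotonically non-increasing in $y$: indeed, from the integral form (\ref{*}) of the steady-state equation, $w'=\partial_y\bar\rho^{\gamma-1}=-4\pi\frac{\gamma-1}{\gamma}r^{1-d}\int_0^r y^{d-1}w^\alpha\,dy\leq 0$, so $w=\bar\rho^{\gamma-1}$ is non-increasing, whence $\bar\rho$ and $\bar\rho^\gamma$ are non-increasing as well (the case $\gamma=1$ being handled identically via (\ref{**})). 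Consequently $\partial_y\bar\rho^\gamma\leq 0$, and the product $(2(d-1)-d\gamma)\,y^d\chi^2\,\partial_y\bar\rho^\gamma$ is the product of a non-positive prefactor, the non-negative factor $y^d\chi^2$, and the non-positive factor $\partial_y\bar\rho^\gamma$, hence non-negative.

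Combining the three observations yields $\<L\chi,\chi\>\geq 0$ for all admissible $\chi$, and the preceding proposition then gives linear stability under radial perturbations. There is essentially no serious obstacle here: the only point requiring any care is the sign of $\partial_y\bar\rho^\gamma$, i.e. the monotonicity of the density profile, which is immediate from the integral representation of the steady-state ODE. It is worth noting that this non-negativity is precisely the structural feature that ceases to hold once $\gamma<2(d-1)/d$, since then the middle coefficient changes sign and that term becomes a potentially destabilising negative contribution, which is exactly why the two remaining regimes in Theorem \ref{liquid Lane–Emden stars stability} require the finer central-density analysis rather than this one-line sign argument.
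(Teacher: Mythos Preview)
Your argument is correct and is exactly the approach taken in the paper: use the explicit quadratic form (\ref{TE}), observe that the first integrand and boundary term are manifestly non-negative, and note that when $2(d-1)-d\gamma\leq 0$ the middle term is the product of a non-positive coefficient with the non-positive quantity $\partial_y\bar\rho^\gamma$, hence non-negative. The paper compresses this into a single sentence, but the content is identical.
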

\begin{proof}
Since $\partial_y\bar\rho^\gamma<0$, if $2(d-1)-d\gamma\leq 0$, then it's clear from equation (\ref{TE}) that $\<L\chi,\chi\>>0$ for all $\chi\not=0$, hence the system must be stable.\qed
\end{proof}

When $\gamma<2(d-1)/d$, the proof of linear stability for stars of small relative central density require Poincar\'e-Hardy-type inequalities in the following two lemmas and proposition.

\begin{lemma}
Let $v\in C^1([a,b])$, then
\begin{align*}
\int_a^b|v(z)|^2dz\lesssim_{a,b}|v(a)|^2+\int_a^b|v'(z)|^2dz\\
\int_a^b|v(z)|^2dz\lesssim_{a,b}|v(b)|^2+\int_a^b|v'(z)|^2dz.
\end{align*}
\end{lemma}
\begin{proof}
We will prove the second statement, the first is proven in the same way. By the fundamental theorem of calculus
\[v(z)=v(b)-\int_z^bv'(y)dy.\]
Using the fact that $|x+y|^2\leq|x|^2+2|x||y|+|y|^2\leq 2|x|^2+2|y|^2$ and H\"older's inequality we have
\[|v(z)|^2\leq 2|v(b)|^2+2\abs{\int_z^bv'(y)dy}^2\leq 2|v(b)|^2+2(b-z)^2\int_z^b|v'(y)|^2dy.\]
So
\begin{align*}
\int_a^b|v(z)|^2dz&\leq\int_a^b\brac{2|v(b)|^2+2(b-a)^2\int_z^b|v'(y)|^2dy}dz\\
&\leq 2(b-a)|v(b)|^2+2(b-a)^3\int_a^b|v'(y)|^2dy.\qed
\end{align*}
\end{proof}

\begin{lemma}
Let $a\geq 2$ and $0<b<c<\infty$. Then for any $v\in C^1([0,c])$ we have
\[\int_0^bz^a|v(z)|^2dz\lesssim_{a,b,c}\int_0^cz^a|v'(z)|^2dz+\int_b^cz^a|v(z)|^2dz.\]
\end{lemma}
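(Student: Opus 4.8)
The plan is to reduce everything to the single boundary value $|v(b)|^2$, controlling the inner (degenerate) region $[0,b]$ and the outer (non-degenerate) region $[b,c]$ separately and then gluing them together through that quantity.

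First I would prove a local weighted Hardy estimate on the inner region, namely $\int_0^b z^a|v(z)|^2\,dz\lesssim_{a,b}|v(b)|^2+\int_0^b z^a|v'(z)|^2\,dz$. To do this, write $v(z)=v(b)-\int_z^b v'(y)\,dy$ for $z\in[0,b]$, so that $|v(z)|^2\leq 2|v(b)|^2+2\big|\int_z^b v'\big|^2$. For the second term I apply a weighted Cauchy--Schwarz, splitting $v'(y)=y^{-a/2}\cdot y^{a/2}v'(y)$, which gives $\big|\int_z^b v'\big|^2\leq\big(\int_z^b y^{-a}\,dy\big)\int_0^b y^a|v'|^2\,dy$. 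This is the crux of the argument: since $a\geq 2>1$ one has $\int_z^b y^{-a}\,dy\leq\frac{1}{a-1}z^{-(a-1)}$, so that after multiplying by the weight $z^a$ and integrating, the relevant factor is $\int_0^b z^a\cdot z^{-(a-1)}\,dz=\int_0^b z\,dz=b^2/2<\infty$. Together with $\int_0^b z^a\,dz=b^{a+1}/(a+1)$ for the boundary term, this yields the inner estimate. The hypothesis on $a$ is exactly what makes the singular integral near the origin integrable after being paired with the degenerate weight $z^a$; this is the one genuinely delicate point, and I expect it to be the main obstacle.

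Next I would control the boundary value $|v(b)|^2$ using only outer data, where the weight is harmless. For $z\in[b,c]$ the fundamental theorem of calculus gives $v(b)=v(z)-\int_b^z v'$, whence $|v(b)|^2\leq 2|v(z)|^2+2(c-b)\int_b^c|v'|^2$; averaging in $z$ over $[b,c]$ then produces $|v(b)|^2\lesssim_{b,c}\int_b^c|v(z)|^2\,dz+\int_b^c|v'(z)|^2\,dz$. This is the reverse direction to the preceding lemma, but it is an equally elementary averaging computation.

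Finally I would glue the two estimates. On $[b,c]$ the weight satisfies $b^a\leq z^a\leq c^a$, so $\int_b^c|v|^2\lesssim_{a,b}\int_b^c z^a|v|^2$ and $\int_b^c|v'|^2\lesssim_{a,b}\int_b^c z^a|v'|^2\leq\int_0^c z^a|v'|^2$, while trivially $\int_0^b z^a|v'|^2\leq\int_0^c z^a|v'|^2$. Substituting the outer bound for $|v(b)|^2$ into the inner estimate and using these comparisons gives $\int_0^b z^a|v|^2\lesssim_{a,b,c}\int_0^c z^a|v'|^2+\int_b^c z^a|v|^2$, which is the claim. Everything outside the inner Hardy step is routine.
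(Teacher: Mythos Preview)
Your argument is correct, but it proceeds along a genuinely different route from the paper's. The paper introduces a smooth cutoff $\phi$ equal to $1$ on $[0,b]$ and vanishing on $[c,\infty)$, applies the preceding lemma to the function $z^{a/2}\phi v$ (which vanishes at $z=c$), and then expands $|(z^{a/2}\phi v)'|^2$; an integration-by-parts identity collapses the cross terms to $-\tfrac{1}{4}a(a-2)\int_0^c z^{a-2}|\phi v|^2$, which is nonpositive precisely when $a\geq 2$, leaving $\int_0^c z^a|(\phi v)'|^2$. The product rule on $(\phi v)'$ then localises the $v$-term to $[b,c]$ via $\supp\phi'$. Your approach instead splits at $b$, proves the inner weighted Hardy estimate directly via a weighted Cauchy--Schwarz (using only $\int_z^b y^{-a}\,dy\lesssim z^{1-a}$, hence only $a>1$), and recovers $|v(b)|^2$ from outer data by averaging. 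The paper's method is the classical Hardy-inequality-by-parts mechanism and makes essential use of the threshold $a\geq 2$; your method is more elementary, avoids the cutoff entirely, and in fact goes through under the weaker hypothesis $a>1$, so it gives a slightly stronger statement than what is actually needed.
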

\begin{proof}
Let $\phi\in C^\infty([0,\infty))$ be a decreasing function such that $\phi(z)=1$ for $z\leq b$ and $\phi(z)=0$ for $z\geq c$. First note that integration by parts tells us that
\[\int_0^cz^{a-1}\phi v(\phi v)'dz=-\int_0^c(a-1)z^{a-2}(\phi v)^2+z^{a-1}(\phi v)'\phi v\,dz.\]
Using the lemma above, we have
\begin{align*}
\int_0^bz^a|\phi v|^2dz&=\int_0^b\abs{z^{a/2}\phi v}^2dz
\lesssim_c\int_0^c\abs{(z^{a/2}\phi v)'}^2dz\\
&\leq{a^2\over 4}\int_0^cz^{a-2}|\phi v|^2dz+a\int_0^cz^{a-1}\phi v(\phi v)'dz+\int_0^cz^a|(\phi v)'|^2dz\\
&\leq{1\over 4}(a^2-2a(a-1))\int_0^cz^{a-2}|\phi v|^2dz+\int_0^cz^a|(\phi v)'|^2dz\\
&=-{1\over 4}a(a-2)\int_0^cz^{a-2}|\phi v|^2dz+\int_0^cz^a|(\phi v)'|^2dz\\
&\leq\int_0^cz^a|(\phi v)'|^2dz
\end{align*}
Using the fact that $|x+y|^2\leq|x|^2+2|x||y|+|y|^2\leq 2|x|^2+2|y|^2$ we have
\begin{align*}
\int_0^bz^a|v|^2dz&\leq\int_0^bz^a|\phi v|^2dz\lesssim_c\int_0^cz^a|(\phi v)'|^2dz=\int_0^cz^a|\phi'v+\phi v'|^2dz\\
&\leq 2\int_0^cz^a(|\phi' v|^2+|\phi v'|^2)dz
\leq 2\|\phi'\|_\infty^2\int_b^cz^a|v|^2dz+2\int_0^cz^a|v'|^2dz.\qed
\end{align*}
\end{proof}

\begin{proposition}
Let $a\geq 2$. We have
\[\int_0^1z^a|v(z)|^2dz\lesssim_a\int_0^1z^a|v'(z)|^2dz+|v(1)|^2\qquad\text{for all}\qquad v\in C^1([0,1]).\]
\end{proposition}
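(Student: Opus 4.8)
The plan is to localize: split $[0,1]$ into an inner region near the weight-degenerate point $z=0$ and an outer region bounded away from it, and apply one of the two preceding lemmas on each piece. Concretely, I would fix the cutoff at $b=1/2$ and write $\int_0^1 z^a|v|^2\,dz=\int_0^{1/2}z^a|v|^2\,dz+\int_{1/2}^1z^a|v|^2\,dz$, then bound each summand separately.

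For the inner piece I would invoke the previous (weighted, Hardy-type) lemma with $b=1/2$ and $c=1$, which is precisely the lemma that uses the hypothesis $a\geq 2$ to absorb the singular weight near the origin. This yields $\int_0^{1/2}z^a|v|^2\,dz\lesssim_a\int_0^1z^a|v'|^2\,dz+\int_{1/2}^1z^a|v|^2\,dz$, so the inner piece is controlled by the full weighted Dirichlet energy plus the outer piece, which I still have to estimate.

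For the outer piece the weight is non-degenerate: on $[1/2,1]$ one has $(1/2)^a\leq z^a\leq 1$, so $z^a$ is comparable to $1$ with constants depending only on $a$. Hence $\int_{1/2}^1 z^a|v|^2\,dz\lesssim_a\int_{1/2}^1|v|^2\,dz$, and I would apply the elementary (unweighted) Poincaré lemma in its second form, the one retaining the right-endpoint value, on $[1/2,1]$ to get $\int_{1/2}^1|v|^2\,dz\lesssim|v(1)|^2+\int_{1/2}^1|v'|^2\,dz$. Using $z^a\geq(1/2)^a$ once more to pass back to the weighted derivative, $\int_{1/2}^1|v'|^2\,dz\lesssim_a\int_{1/2}^1 z^a|v'|^2\,dz\leq\int_0^1z^a|v'|^2\,dz$, so the outer piece is bounded by $|v(1)|^2+\int_0^1 z^a|v'|^2\,dz$.

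Combining the two estimates gives $\int_0^1z^a|v|^2\,dz\lesssim_a\int_0^1z^a|v'|^2\,dz+|v(1)|^2$, as desired. There is no genuinely hard step here: the only substantive input is the Hardy-type lemma (which carries the $a\geq 2$ assumption and handles the degeneracy of $z^a$ at the origin), while the boundary term $|v(1)|^2$ is produced by the harmless non-degenerate region through the elementary Poincaré lemma. The only point requiring a little care is bookkeeping the comparison constants on $[1/2,1]$, but these depend only on $a$ since the cutoff $b$ and endpoint $c$ are fixed numerical values, which matches the claimed dependence $\lesssim_a$.
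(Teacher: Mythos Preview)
Your proposal is correct and follows essentially the same route as the paper: split at $b=1/2$, handle the outer piece via the unweighted Poincar\'e lemma with the right endpoint and comparability of $z^a$ to $1$ on $[1/2,1]$, and handle the inner piece via the weighted Hardy-type lemma (with $b=1/2$, $c=1$), which produces exactly the outer integral already controlled. The paper's proof is identical in structure and in the choice of cutoff.
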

\begin{proof}
Using the first of the above two lemmas we have
\begin{align*}
\int_{1\over 2}^1z^a|v(z)|^2dz&\leq\int_{1\over 2}^1|v(z)|^2dz\lesssim|v(1)|^2+\int_{1\over 2}^1|v'(y)|^2dy
\leq|v(1)|^2+2^a\int_0^1z^a|v'(z)|^2dz
\end{align*}
And using the second of the above two lemmas we have
\begin{align*}
\int_0^{1\over 2}z^a|v(z)|^2dz\lesssim_a\int_0^1z^a|v'(z)|^2dz+\int_{1\over 2}^1z^a|v(z)|^2dz
\end{align*}
but the rightmost term we have already estimated in the right form. Hence we are done.\qed
\end{proof}

With this Poincar\'e-Hardy-type inequality, we can prove linear stability for stars of small relative central density (see Definition \ref{def 1}) when $\gamma<2(d-1)/d$.

\begin{theorem}
Suppose $\gamma<2(d-1)/d$. There exist $\epsilon>0$ such that the liquid Lane–Emden stars are linearly stable against radial perturbations whenever $\bar\rho(0)-1<\epsilon$ (i.e. small relative central density, Definition \ref{def 1}).
\end{theorem}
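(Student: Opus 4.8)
The plan is to invoke the stability criterion of the preceding proposition: it suffices to show $\<L\chi,\chi\>\geq 0$ for every $\chi$, and by density of $C^1$ functions in $H^1_r(B_R(\R^{d+2}))$ together with continuity of $Q$ it is enough to verify this for $\chi\in C^1([0,R])$. Since $\gamma<2(d-1)/d$ we have $c_0:=2(d-1)-d\gamma>0$, and because $\partial_y\bar\rho^\gamma<0$ the middle term in (\ref{TE}) is the only negative contribution. The whole strategy is to absorb this single bad term into the positive gradient and boundary terms, exploiting the fact that small relative central density forces the liquid star to have small radius $R$.

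First I would bound the bad term pointwise. From the steady state equation $\partial_y\bar\rho^\gamma=\partial_r p=-\bar\rho\, m(y)/y^{d-1}$, and using $\bar\rho\leq\bar\rho(0)$ together with $m(y)\leq 4\pi\bar\rho(0)y^d/d$, one obtains $|\partial_y\bar\rho^\gamma|\leq \frac{4\pi}{d}\bar\rho(0)^2\,y$. Hence the negative term is controlled by $c_0\frac{4\pi}{d}\bar\rho(0)^2\int_0^R y^{d+1}\chi^2\,dy$. For the positive terms, $\bar\rho^\gamma\geq 1$ on the liquid star gives $\int_0^R\gamma\bar\rho^\gamma y^{d+1}(\partial_y\chi)^2\,dy\geq\gamma\int_0^R y^{d+1}(\partial_y\chi)^2\,dy$, and I retain the boundary term $d\gamma R^d\chi(R)^2$.

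Next I would apply the Poincar\'e--Hardy inequality of the Proposition, rescaled from $[0,1]$ to $[0,R]$ via $v(z)=\chi(Rz)$ with exponent $a=d+1\geq 2$; this yields, with a constant $K$ depending only on $d$,
\[\int_0^R y^{d+1}\chi^2\,dy\lesssim_d R^2\int_0^R y^{d+1}(\partial_y\chi)^2\,dy+R^{d+2}\chi(R)^2.\]
Combining the three estimates, $\<L\chi,\chi\>\geq 0$ will follow provided the coefficient $c_0\frac{4\pi}{d}\bar\rho(0)^2 K R^2$ multiplying $\int y^{d+1}(\partial_y\chi)^2$ is $\leq\gamma$, and the boundary coefficient $c_0\frac{4\pi}{d}\bar\rho(0)^2 K R^{d+2}$ is $\leq d\gamma R^d$. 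Both inequalities reduce to a single smallness condition, namely that $R^2$ lie below an explicit positive threshold, with the interior comparison being the binding one.

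Finally I would show that this threshold is met once $\bar\rho(0)$ is close to $1$. Evaluating the decay estimate at $r=R$, where $\bar\rho(R)=1$, gives $\frac{2\pi}{d}\frac{2-\gamma}{\gamma}R^2\leq 1-\bar\rho(0)^{-(2-\gamma)}$, whose right side tends to $0$ as $\bar\rho(0)\to 1^+$. Since $\bar\rho(0)^2$ stays bounded, I can choose $\epsilon>0$ small enough that $\bar\rho(0)-1<\epsilon$ forces $R^2$ below the threshold, making all comparisons hold and hence $\<L\chi,\chi\>\geq 0$ for all $\chi$, so the star is linearly stable. The main obstacle is arranging the weighted Poincar\'e inequality so that the correct powers of $R$ appear in both the interior and the boundary terms, thereby ensuring that the single smallness of $R$---supplied by the decay estimate---simultaneously dominates both parts of the negative contribution; tracking that the implied constants stay uniform as $\bar\rho(0)\to 1$ is the remaining bookkeeping.
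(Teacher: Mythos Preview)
Your proof is correct and follows essentially the same approach as the paper: both bound $|\partial_y\bar\rho^\gamma|\leq\frac{4\pi}{d}\bar\rho(0)^2 y$, invoke the weighted Poincar\'e--Hardy inequality with exponent $a=d+1$, and use the decay estimate to force $R\to 0$ as $\bar\rho(0)\to 1^+$. The only cosmetic difference is that the paper rescales the quadratic form to $[0,1]$ at the outset, whereas you keep the integral on $[0,R]$ and rescale the Poincar\'e inequality instead; the resulting smallness condition on $\bar\rho(0)^2R^2$ is identical.
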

\begin{proof}
Let $z=y/R$. Let $\tilde{\chi}(z)=\chi(Rz)$ and $\tilde{\rho}(z)=\bar{\rho}(Rz)$. Then (\ref{TE}) becomes
\begin{align*}
\<L\chi,\chi\>&=\int_0^1\gamma\tilde\rho(z)^\gamma(Rz)^{d+1}{1\over R^2}\partial_z\tilde\chi(z)^2+(2(d-1)-d\gamma)(Rz)^d\tilde\chi(z)^2{1\over R}\partial_z\tilde\rho^\gamma(z)Rdz+d\gamma R^{d}\tilde\chi(1)^2\\
&=R^d\int_0^1\gamma\tilde\rho^\gamma z^{d+1}(\partial_z\tilde\chi)^2+(2(d-1)-d\gamma)z^d\tilde\chi^2\partial_z\tilde\rho^\gamma dz+R^{d}d\gamma\tilde\chi(1)^2.
\end{align*}
We know from our derivation of the existence of steady states that
\begin{align*}
{d\bar\rho^{\gamma-1}\over dy}&=-4\pi{\gamma-1\over\gamma}{1\over y^{d-1}}\int_0^yr^{d-1}\bar\rho(r)dr\geq-4\pi\bar\rho(0){\gamma-1\over\gamma}{1\over y^{d-1}}\int_0^yr^{d-1}dr\\
&=-4\pi\bar\rho(0){\gamma-1\over\gamma}{1\over d}y
\end{align*}
So
\begin{align*}
\partial_y\bar\rho^\gamma&=\gamma\bar\rho^{\gamma-1}\partial_y\bar\gamma={\gamma\over\gamma-1}\bar\rho\partial_y\bar\rho^{\gamma-1}\geq-{4\pi\over d}\bar\rho(0)^2y\\
\partial_z\tilde{\rho}^\gamma&\geq-R{4\pi\over d}\bar\rho(0)^2(Rz)=-{4\pi\over d}\bar\rho(0)^2R^2z.
\end{align*}
So when $2(d-1)-d\gamma\geq 0$ we have
\begin{align*}
\<L\chi,\chi\>&\geq R^d\int_0^1\gamma z^{d+1}(\partial_z\tilde\chi)^2-{4\pi\over d}\bar\rho(0)^2R^2(2(d-1)-d\gamma)z^{d+1}\tilde\chi^2dz+R^{d}d\gamma\tilde\chi(1)^2.
\end{align*}
From the decay estimates we have for the steady states we see that $R\to 0$ as $\bar\rho(0)\searrow 1$. So the above proposition tells us that for small enough relative central density $\bar\rho(0)-1$, we have
\[{4\pi\over d}\bar\rho(0)^2R^2(2(d-1)-d\gamma)\int_0^1z^{d+1}\tilde\chi^2dz<\gamma \int_0^1z^{d+1}(\partial_z\tilde\chi)^2dz+d\gamma\tilde\chi(1)^2\qquad\text{for any}\qquad\tilde{\chi}.\]
It follows that we have stability.\qed
\end{proof}

Finally, it remains to prove linear instability for stars of large central density when $\gamma<2(d-1)/d$.

\begin{theorem}
Suppose $\gamma<2(d-1)/d$ and $d<10$. There exist $C>0$ such that the liquid Lane–Emden stars are linearly unstable whenever $\bar\rho(0)>C$ (i.e. large central density).
\end{theorem}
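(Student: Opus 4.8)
The plan is to use the variational instability criterion established above: it suffices to exhibit a single $\chi$ with $\langle L\chi,\chi\rangle<0$. As in the stability proof I would rescale to the unit interval, writing $z=y/R$, $\tilde\chi(z)=\chi(Rz)$, $\tilde\rho(z)=\bar\rho(Rz)$, so that
\[\langle L\chi,\chi\rangle=R^d\brac{\gamma\int_0^1\tilde\rho^\gamma z^{d+1}(\partial_z\tilde\chi)^2\,dz+(2(d-1)-d\gamma)\int_0^1 z^d\tilde\chi^2\,\partial_z\tilde\rho^\gamma\,dz+d\gamma\,\tilde\chi(1)^2}.\]
Since $R=R_\kappa$ stays bounded away from $0$ and $\infty$ for large central density $\kappa:=\bar\rho(0)$ (Corollary \ref{radius limit}), the sign of $\langle L\chi,\chi\rangle$ is that of the bracket. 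The key structural input is that, as $\kappa\to\infty$, the rescaled profile collapses onto the singular star: combining Corollary \ref{Large central density limit} with $R_\kappa\to R_\infty=(v_1^*)^{1-\gamma/2}$ gives $\tilde\rho(z)\to z^{-2/(2-\gamma)}$ pointwise on $(0,1]$, and likewise $\partial_z\tilde\rho^\gamma\to-\frac{2\gamma}{2-\gamma}z^{-\sigma-1}$ with $\sigma:=2\gamma/(2-\gamma)$. The Buchdahl bounds of Proposition \ref{Buchdahl's inequality} furnish the $\kappa$-independent domination $\tilde\rho(z)\lesssim z^{-2/(2-\gamma)}$ and $\abs{\partial_z\tilde\rho^\gamma}\lesssim z^{-\sigma-1}$, so dominated convergence lets me pass to the limiting form
\[\tilde Q_\infty[\tilde\chi]=\gamma\int_0^1 z^{d+1-\sigma}(\partial_z\tilde\chi)^2\,dz-\frac{2\gamma}{2-\gamma}(2(d-1)-d\gamma)\int_0^1 z^{d-1-\sigma}\tilde\chi^2\,dz+d\gamma\,\tilde\chi(1)^2\]
for any fixed admissible $\tilde\chi$.

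Next I would test with the monomial $\tilde\chi(z)=z^p$. Writing $\beta:=\frac{2}{2-\gamma}(2(d-1)-d\gamma)$ and using $\int_0^1 z^{d-1-\sigma+2p}\,dz=(d-\sigma+2p)^{-1}$ (valid when $d-\sigma+2p>0$), a direct computation gives
\[\tilde Q_\infty[z^p]=\gamma\brac{\frac{p^2-\beta}{d-\sigma+2p}+d}.\]
This is negative precisely when $(p+d)^2<d\sigma+\beta$, and a short calculation shows the $\gamma$-dependence cancels, $d\sigma+\beta=\frac{4(d-1)}{2-\gamma}$. Hence an admissible exponent $p\in\brac{\frac{\sigma-d}{2},\,-d+\sqrt{d\sigma+\beta}}$ exists iff $\sqrt{d\sigma+\beta}>\frac{\sigma+d}{2}$, i.e.\ iff
\[16(d-1)(2-\gamma)>\brac{2d-(d-2)\gamma}^2.\]
This is exactly the condition that $\grad\mb F(\mb v^*)$ has complex (spiralling) eigenvalues. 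Viewing the left minus right side as a quadratic $Q(\gamma)=(d-2)^2\gamma^2+(-4d^2+24d-16)\gamma+(4d^2-32d+32)$ opening upward, one checks $Q(1)=(d-2)(d-10)$ and that $Q$ is negative at $\gamma=2d/(d+2)$ (there the eigenvalue discriminant equals $-4(d-2)$); thus for $2<d<10$ one has $Q<0$ on the whole interval $[1,2d/(d+2)]$, and this is exactly where the hypothesis $d<10$ enters. For such $\gamma$ I fix a valid $p$, so $\tilde Q_\infty[z^p]<0$, and by the dominated-convergence step $\tilde Q[z^p]<0$ for all $\kappa$ larger than some $C$; this gives the desired growing mode.

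The main obstacle is the limit in the first paragraph. For finite $\kappa$ the profile is regular with $\tilde\rho(0)=\kappa$, whereas the limit $z^{-2/(2-\gamma)}$ is singular at the origin, so there is a boundary layer of width $\sim\kappa^{-(2-\gamma)/2}$ near $z=0$ across which $\tilde\rho$ interpolates between $\kappa$ and the singular profile; I must verify that this layer contributes negligibly, which is where the uniform Buchdahl domination together with the strict inequality $d-\sigma+2p>0$ is essential. A secondary point is that the asymptotics of Corollary \ref{Large central density limit} are only available for $\gamma<2d/(d+2)$, where the gaseous star has infinite support and $R_\kappa\to R_\infty>0$; extending the conclusion to the full stated range $\gamma<2(d-1)/d$ requires the complementary compact-support regime $\gamma\geq 2d/(d+2)$, in which $R_\kappa\to 0$ and one instead reads off instability from the turning point of $\kappa\mapsto R_\kappa$, in the spirit of the dynamical-systems turning point principle of \cite{Hadzic Lin Rein}.
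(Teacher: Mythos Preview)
Your argument for the regime $\gamma<2d/(d+2)$ is correct and is essentially the paper's Case~3 in different clothing. The paper also passes to the singular-star limit and tests with a power law; there it uses the capped function $\chi(y)=\epsilon^{-a}\wedge y^{-a}$ with $a=\tfrac12(d-\sigma)$ chosen so that the integrand becomes $y^{-1}$ and the integral diverges logarithmically as $\epsilon\to0$, whereas you take $\tilde\chi(z)=z^{p}$ with $p$ slightly above $(\sigma-d)/2$ so the integral converges. Both routes lead to the \emph{same} algebraic obstruction $(d-\sigma)^2<4\beta$, which you then recast neatly as the spiralling condition for $\grad\mb F(\mb v^*)$; this identification is a genuine bonus not made explicit in the paper, and your quadratic check $Q(1)=(d-2)(d-10)$, $Q(2d/(d+2))<0$ is an efficient way to see where $d<10$ enters.

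The gap is the complementary range $2d/(d+2)\le\gamma<2(d-1)/d$. Your closing sentence defers this to a ``turning point principle'' in the spirit of \cite{Hadzic Lin Rein}, but no such principle is established in the present paper for the liquid problem, and invoking it is not a proof. The paper handles this range by a direct and much more elementary trick: for $\gamma>2d/(d+2)$ the underlying gaseous profile $\bar\rho_*$ has \emph{compact} support $[0,R_*]$, so after rescaling one simply tests with the constant $\tilde\chi\equiv1$. The gradient term vanishes, the boundary term is $R_\kappa^{d}d\gamma$, and the negative middle term is $R_\kappa^{d}(2(d-1)-d\gamma)\kappa^{\gamma}\,\bar\rho_*^{-1}(1/\kappa)\int_0^1 z^d(\bar\rho_*^{\gamma})'(\bar\rho_*^{-1}(1/\kappa)z)\,dz$; since $\bar\rho_*^{-1}(1/\kappa)\to R_*$ the integral has a finite nonzero limit, while $R_\kappa^{d}\kappa^{\gamma}=\kappa^{\gamma(d+2)/2-d}[\bar\rho_*^{-1}(1/\kappa)]^d\to\infty$ precisely because $\gamma>2d/(d+2)$, forcing $\langle L_\kappa 1,1\rangle\to-\infty$. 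At the borderline $\gamma=2d/(d+2)$ the paper instead plugs the explicit closed-form profile into $\langle L_\kappa\chi_\kappa,\chi_\kappa\rangle$ and again observes divergence with a constant test function. You should replace the turning-point hand-wave with this argument.
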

\begin{proof}
We deal with three sub-cases individually.

\mybb{Case 1: $\gamma>2d/(d+2)$}

We saw that the family of gaseous steady states are self-similar, so that the family is given by $\bar\rho_\kappa(y)=\kappa\bar\rho_*(\kappa^{1-\gamma/2}y)$ where $\bar\rho_*$ is a steady state. The corresponding liquid star has $R=\kappa^{-(1-\gamma/2)}\bar\rho_*^{-1}(1/\kappa)$. So $\tilde{\rho}_\kappa(z)=\kappa\bar\rho_*(\bar\rho_*^{-1}(1/\kappa)z)$. With this, (\ref{TE}) reads
\begin{align*}
\<L_\kappa\chi,\chi\>&=R^d\kappa^\gamma\int_0^1\gamma\bar\rho_*(\bar\rho_*^{-1}(1/\kappa)z)^\gamma z^{d+1}(\partial_z\tilde\chi)^2+(2(d-1)-d\gamma)\bar\rho_*^{-1}(1/\kappa)z^d\tilde\chi^2(\bar\rho_*^\gamma)'(\bar\rho_*^{-1}(1/\kappa)z)dz\\
&\quad+R^{d}d\gamma\tilde\chi(1)^2
\end{align*}

When $\gamma>2d/(d+2)$, the gaseous steady state $\bar\rho_*$ has compact support. Then $\bar\rho_*^{-1}(1/\kappa)\to\bar\rho_*^{-1}(0)=:R_*$ as $\kappa\to\infty$. So
\begin{align*}
\int_0^1 z^d\partial_z\tilde{\rho}_\kappa^\gamma dz=\kappa^\gamma\underbrace{\bar\rho_*^{-1}(1/\kappa)\int_0^1 z^d(\bar\rho_*^\gamma)'(\bar\rho_*^{-1}(1/\kappa)z)dz}_{\to R_*\int_0^1z^d(\bar\rho_*^\gamma)'(R_*z)dz\qquad\text{as}\qquad\kappa\to\infty}
\end{align*}
by dominated convergence where the integrand is dominated by $z^d\|(\bar\rho_*^\gamma)'\|_\infty$. So
\begin{align*}
\<L_\kappa1,1\>&=R^d(2(d-1)-d\gamma)\kappa^\gamma\bar\rho_*^{-1}(1/\kappa)\int_0^1z^d\tilde\chi^2(\bar\rho_*^\gamma)'(\bar\rho_*^{-1}(1/\kappa)z)dz+R^{d}d\gamma\\
&\to-\infty\qquad\text{as}\qquad\kappa\to\infty.
\end{align*}
Hence we have instability for large central density.

\mybb{Case 2: $\gamma=2d/(d+2)$}

From the explicit formula, we have
\begin{align*}
\rho_\kappa^\gamma(y)&=\kappa^{2d\over d+2}\brac{1+{2\pi\over d^2}\kappa^{4\over d+2}y^2}^{-d}=\brac{\kappa^{-{2\over d+2}}+{2\pi\over d^2}\kappa^{2\over d+2}y^2}^{-d}\\
\partial_y\rho_\kappa^\gamma(y)&=-{4\pi\over d}\kappa^{2\over d+2}y\brac{\kappa^{-{2\over d+2}}+{2\pi\over d^2}\kappa^{2\over d+2}y^2}^{-d-1}
\end{align*}
and
\[R_\kappa={d\over\sqrt{2\pi}}\kappa^{-{2\over d+2}}\brac{\kappa^{2\over d+2}-1}^{1\over 2}.\]
Let $\chi_\kappa(y)=\kappa^{{d\over d+2}}\chi(\kappa^{1\over d+2}y)$. From (\ref{TE}) we have
\begin{align*}
\<L_\kappa\chi_\kappa,\chi_\kappa\>
&=\int_0^{R_\kappa}\gamma\bar\rho_\kappa^\gamma y^{d+1}(\partial_y\chi_\kappa)^2+(2(d-1)-d\gamma)y^d\chi_\kappa^2\partial_y\bar\rho_\kappa^\gamma dy+d\gamma R_\kappa^{d}\chi_\kappa(R_\kappa)^2\\
&=\kappa\int_0^{R_\kappa}{2d\over d+2}\brac{\kappa^{-{2\over d+2}}+{2\pi\over d^2}\kappa^{2\over d+2}y^2}^{-d}y^{d+1}\chi'(\kappa^{1\over d+2}y)^2dy\\
&\quad-\kappa{4\pi\over d}\brac{2(d-1)-{2d^2\over d+2}}\int_0^{R_\kappa}y^{d+1}\brac{\kappa^{-{2\over d+2}}+{2\pi\over d^2}\kappa^{2\over d+2}y^2}^{-d-1}\chi(\kappa^{1\over d+2}y)^2dy\\
&\quad+d\gamma\kappa^{{d\over d+2}}R_\kappa^{d}\chi(\kappa^{1\over d+2}R_\kappa)^2\\
&={2d\over d+2}\int_0^{\kappa^{1\over d+2}R_\kappa}\brac{\kappa^{-{2\over d+2}}+{2\pi\over d^2}z^2}^{-d}z^{d+1}\chi'(z)^2dz\\
&\quad-{8\pi\over d}{d-2\over d+2}\int_0^{\kappa^{1\over d+2}R_\kappa}\brac{\kappa^{-{2\over d+2}}+{2\pi\over d^2}z^2}^{-d-1}z^{d+1}\chi(z)^2dz\\
&\quad+{2d^2\over d+2}\kappa^{{d\over d+2}}R_\kappa^{d}\chi(\kappa^{1\over d+2}R_\kappa)^2\\
&\to{2d\over d+2}\brac{d^2\over 2\pi}^{d}\int_0^{d\over\sqrt{2\pi}}z^{-d+1}\chi'(z)^2dz
-{8\pi\over d}{d-2\over d+2}\brac{d^2\over 2\pi}^{d+1}\int_0^{d\over\sqrt{2\pi}}z^{-d-1}\chi(z)^2dz\\
&\quad+{2d^2\over d+2}\brac{d^2\over 2\pi}^{d\over 2}\chi\brac{d\over\sqrt{2\pi}}^2\qquad\text{as}\qquad\kappa\to\infty
\end{align*}
Let $\chi(z)=1$, then we see that $\<L_\kappa\chi_\kappa,\chi_\kappa\>\to-\infty$ as $\kappa\to\infty$. Hence we have instability.

\mybb{Case 3: $\gamma<2d/(d+2)$}

From (\ref{TE}) we have
\begin{align*}
\<L_\kappa\chi,\chi\>
&=\int_0^{R_\kappa}\gamma\bar\rho_\kappa^\gamma y^{d+1}(\partial_y\chi)^2+(2(d-1)-d\gamma)y^d\chi^2\partial_y\bar\rho_\kappa^\gamma dy+d\gamma R_\kappa^{d}\chi(R_\kappa)^2\\
&=\int_0^{R_\kappa}\gamma\bar\rho_\kappa^\gamma y^{d+1}(\partial_y\chi)^2-(2(d-1)-d\gamma)y\chi^2\bar m_\kappa\bar\rho_\kappa dy+d\gamma R_\kappa^{d}\chi(R_\kappa)^2.
\end{align*}
Fix $\delta>0$, and suppose $\chi$ is constant on $[0,\epsilon]$. Then we have
\begin{align*}
\<L_\kappa\chi,\chi\>&\leq\int_\epsilon^{R_\kappa}\gamma\bar\rho_\kappa^\gamma y^{d+1}(\partial_y\chi)^2-(2(d-1)-d\gamma)y\chi^2\bar m_\kappa\bar\rho_\kappa dy+d\gamma R_\kappa^{d}\chi(R_\kappa)^2\\
&\to\int_\epsilon^{R_\infty}\gamma(v^*_1)^\gamma y^{d+1-{2\gamma\over 2-\gamma}}(\partial_y\chi)^2-(2(d-1)-d\gamma)y^{d+1-{4\over 2-\gamma}}\chi^2v^*_2v^*_1dy+d\gamma R_\infty^{d}\chi(R_\infty)^2\\
&=\gamma(v^*_1)^\gamma\int_\epsilon^{R_\infty}y^{d+1-{2\gamma\over 2-\gamma}}(\partial_y\chi)^2-2\brac{d-{2\over 2-\gamma}}y^{d-{2+\gamma\over 2-\gamma}}\chi^2dy+d\gamma R_\infty^{d}\chi(R_\infty)^2
\end{align*}
as $\kappa\to\infty$. Let $\chi(y)=\epsilon^{-a}\wedge y^{-a}$. Then
\begin{align*}
&\gamma(v^*_1)^\gamma\int_\epsilon^{R_\infty}y^{d+1-{2\gamma\over 2-\gamma}}(\partial_y\chi)^2-2\brac{d-{2\over 2-\gamma}}y^{d-{2+\gamma\over 2-\gamma}}\chi^2dy+d\gamma R_\infty^{d}\chi(R_\infty)^2\\
&=\gamma(v^*_1)^\gamma\brac{a^2-2\brac{d-{2\over 2-\gamma}}}\int_\epsilon^{R_\infty}y^{d-{2+\gamma\over 2-\gamma}-2a}dy+d\gamma R_\infty^{d-2a}
\end{align*}
Let $a={1\over 2}(d-{2\gamma\over 2-\gamma})$ so that ${d-{2+\gamma\over 2-\gamma}-2a}=-1$. By choosing $\epsilon$ small enough we can make the integral large enough in magnitude. So if we have instability if
\[\brac{a^2-2\brac{d-{2\over 2-\gamma}}}<0.\]
We have
\[\brac{d/2-a}(2-\gamma)=\gamma\qquad\iff\qquad\gamma={d-2a\over 1+d/2-a}\]
So the above condition is
\begin{align*}
0>a^2-2\brac{d-(1+d/2-a)}=a^2-2a-(d-2)
\end{align*}
By definition $a\leq{1\over 2}(d-2)$, so it suffice to have
\[a^2-2a-(d-2)\leq a^2-4a<0\]
$a^2-4a$ is negative when $a\in(0,4)$. Hence we are done if ${1\over 2}(d-2)<4$, or equivalently $d<10$.\qed
\end{proof}

This completes the proof of theorem \ref{liquid Lane–Emden stars stability}.

\section*{Acknowledgements}

The author express his gratitude to M. Had\v{z}i\'c for helpful discussions. K. M. Lam acknowledges the support of the EPSRC studentship grant EP/R513143/1.

\appendix

\section{Standard results for gaseous stars}

\begin{lemma}
Let $x,y\in[a,b]$ where $0<a<b$.
\begin{enumerate}
\item If $\alpha\in[0,1]$, then $|x^\alpha-y^\alpha|\leq\alpha a^{\alpha-1}|x-y|$.
\item If $\alpha\in[1,\infty)$, then $|x^\alpha-y^\alpha|\leq\alpha b^{\alpha-1}|x-y|$.
\end{enumerate}
\end{lemma}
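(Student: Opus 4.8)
The plan is to prove both bounds at once via the Mean Value Theorem applied to the function $f(t)=t^\alpha$ on $[a,b]$, which is differentiable with $f'(t)=\alpha t^{\alpha-1}$ because $a>0$ keeps us away from the singularity at $0$. For $x,y\in[a,b]$ the MVT supplies a point $\xi$ lying between $x$ and $y$ (hence $\xi\in[a,b]$) with
\[x^\alpha-y^\alpha=\alpha\xi^{\alpha-1}(x-y),\qquad\text{so that}\qquad |x^\alpha-y^\alpha|=\alpha\xi^{\alpha-1}|x-y|.\]
Everything then reduces to bounding the single factor $\xi^{\alpha-1}$ over $[a,b]$, and the two cases of the lemma correspond exactly to the monotonicity of the auxiliary map $t\mapsto t^{\alpha-1}$.

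For part i, when $\alpha\in[0,1]$ the exponent $\alpha-1$ is non-positive, so $t\mapsto t^{\alpha-1}$ is non-increasing on $[a,b]$ and therefore $\xi^{\alpha-1}\leq a^{\alpha-1}$; substituting gives $|x^\alpha-y^\alpha|\leq\alpha a^{\alpha-1}|x-y|$. For part ii, when $\alpha\in[1,\infty)$ the exponent $\alpha-1$ is non-negative, so $t\mapsto t^{\alpha-1}$ is non-decreasing and $\xi^{\alpha-1}\leq b^{\alpha-1}$, yielding $|x^\alpha-y^\alpha|\leq\alpha b^{\alpha-1}|x-y|$. The degenerate endpoints $\alpha=0$ (where both sides vanish) and $\alpha=1$ (where the inequalities become equalities) are subsumed by these same estimates, and the case $x=y$ is trivial.

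There is no genuine obstacle here; the only point requiring attention is correctly identifying, in each range of $\alpha$, which endpoint of $[a,b]$ maximises the derivative $\alpha t^{\alpha-1}$ — namely $a$ when $\alpha\leq 1$ and $b$ when $\alpha\geq 1$ — which is precisely the information the two cases encode. If one prefers to avoid invoking the unspecified intermediate point $\xi$, the identical estimate follows by writing $x^\alpha-y^\alpha=\int_y^x\alpha t^{\alpha-1}\,dt$ and bounding the integrand by its maximum over $[a,b]$, the maximiser again being $a$ or $b$ according to the sign of $\alpha-1$.
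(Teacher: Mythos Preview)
Your proof is correct and takes essentially the same approach as the paper: apply the mean value theorem (or inequality) to $f(t)=t^\alpha$ and bound the derivative $\alpha t^{\alpha-1}$ on $[a,b]$ by its value at the appropriate endpoint according to the sign of $\alpha-1$. Your version simply spells out in more detail the monotonicity argument that the paper leaves implicit.
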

\begin{proof}
This follows from the mean value inequality apply to the function $f(x)=x^\alpha$. In case i, $|f'(x)|$ is bounded by $\alpha a^{\alpha-1}$ on $[a,b]$. In case ii, $|f'(x)|$ is bounded by $\alpha b^{\alpha-1}$ on $[a,b]$.\qed
\end{proof}

\begin{lemma}
Let $x,y\in\R$, then
\[|e^x-e^y|\leq\max\{e^x,y^y\}|x-y|.\]
\end{lemma}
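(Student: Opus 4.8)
The plan is to mimic the structure of the preceding lemma (the one bounding $|x^\alpha - y^\alpha|$), replacing the power function by the exponential. The key observation is that $f(t) = e^t$ is differentiable everywhere with $f'(t) = e^t$, so the natural tool is the mean value theorem applied on the interval with endpoints $x$ and $y$.

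First I would invoke the mean value theorem: there exists a point $c$ lying strictly between $x$ and $y$ such that
\[e^x - e^y = e^c(x - y).\]
Next I would control the factor $e^c$. Since $t \mapsto e^t$ is strictly increasing and $c$ lies between $x$ and $y$, the value $e^c$ is sandwiched between $e^x$ and $e^y$, so in particular $e^c \leq \max\{e^x, e^y\}$. Taking absolute values of the displayed identity and substituting this bound then gives
\[|e^x - e^y| = e^c\,|x - y| \leq \max\{e^x, e^y\}\,|x - y|,\]
which is exactly the claimed inequality (reading the apparent $y^y$ in the statement as the intended $e^y$).

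There is no genuine obstacle here, so rather than a hard step the point worth flagging is why the $\max$ appears. In the previous lemma the variables were confined to a fixed interval $[a,b]$ with $0 < a < b$, and the monotonicity of $f'$ let one bound the derivative uniformly by its value at a single distinguished endpoint ($a^{\alpha-1}$ or $b^{\alpha-1}$). Here $x$ and $y$ range over all of $\R$ with no ordering assumed between them, so neither $e^x$ nor $e^y$ is a priori the larger, and one must retain $\max\{e^x, e^y\}$ as the uniform bound for $e^c$ over the connecting interval.
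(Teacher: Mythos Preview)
Your proof is correct and matches the paper's own argument exactly: the paper simply invokes the mean value inequality for $f(t)=e^t$ and bounds $|f'|$ on the interval between $x$ and $y$ by $\max\{e^x,e^y\}$. Your observation that the printed $y^y$ is a typo for $e^y$ is also right.
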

\begin{proof}
This follows from the mean value inequality apply to the function $f(x)=e^x$, noting that $|f'(x)|$ is bounded by $\max\{e^x,y^y\}$ on $[x,y]$.\qed
\end{proof}

\begin{theorem}
For every $\rho_0>0$, the steady state equation admits a unique solution $\rho\geq 0$ such that $\rho(0)=\rho_0$. The interval of existence $[0,R)$ is such that either $R=\infty$ or $\lim_{r\to R}\rho(r)=0$.
\end{theorem}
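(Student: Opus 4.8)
The plan is to recast the ODE in integral form, where the apparent singularity at $r=0$ becomes harmless, and then run a contraction-mapping argument for local existence and uniqueness before continuing to a maximal interval. I would first treat $\gamma>1$ in the enthalpy variable $w=\rho^{\gamma-1}$. Integrating (\ref{*}) once more in $r$ turns the problem into the fixed-point equation
\[w(r)=w_0-4\pi{\gamma-1\over\gamma}\int_0^r{1\over s^{d-1}}\int_0^sy^{d-1}w(y)^\alpha\,dy\,ds=:T[w](r),\qquad w_0=\rho_0^{\gamma-1}.\]
The key observation is that although $s^{-(d-1)}$ blows up at the origin, the inner weight $y^{d-1}$ exactly compensates it: for bounded $w$ one has ${1\over s^{d-1}}\int_0^sy^{d-1}\,dy=s/d$, so the double integral is continuous and $O(r^2)$ near $0$. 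Hence $T$ maps $C([0,\delta])$ into itself, and one reads off $T[w](0)=w_0$ and $(T[w])'(0)=0$.

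For the contraction estimate I would restrict $T$ to the ball $B=\{w\in C([0,\delta]):\|w-w_0\|_\infty\le w_0/2\}$, on which $w$ stays in $[w_0/2,3w_0/2]$, bounded away from $0$. There the map $z\mapsto z^\alpha$ is Lipschitz by the elementary bound for power functions proved above, and together with ${1\over s^{d-1}}\int_0^sy^{d-1}\,dy=s/d$ this yields $\|T[w_1]-T[w_2]\|_\infty\le C\delta^2\|w_1-w_2\|_\infty$ and $\|T[w]-w_0\|_\infty\le C'\delta^2$. Choosing $\delta$ small makes $T$ a contraction of $B$ into itself, and the Banach fixed-point theorem produces a unique solution $w>0$ on $[0,\delta]$ with $w(0)=w_0$; differentiating recovers (\ref{*}) and hence (\ref{enthalpy equation}).

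Next I would continue the solution. Away from the origin the equation is a regular second-order ODE whose right-hand side is $C^1$ wherever $w>0$, so the standard continuation theory extends the local solution to a maximal interval $[0,R)$ on which it is unique. From (\ref{*}) we have $w'<0$, so $w$ is decreasing and trapped in $(0,w_0]$, with $|w'|\lesssim r$ bounded on bounded intervals; it can therefore neither blow up nor leave the region of regularity except by reaching $0$. Consequently either $R=\infty$, or $w(r)\to 0$ as $r\to R$, which for $\rho=w^{1/(\gamma-1)}$ is precisely $\lim_{r\to R}\rho(r)=0$. The case $\gamma=1$ is identical after replacing $w$ by $h=\ln\rho$, equation (\ref{*}) by (\ref{**}), and the nonlinearity $w^\alpha$ by $e^h$, whose Lipschitz bound on bounded sets was also established above; here $h$ remains finite on every bounded interval, so $R=\infty$, consistent with the dichotomy.

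The main obstacle is precisely the singular coefficient $1/r^{d-1}$, which rules out a naive application of Picard--Lindel\"of at the origin; the device that defeats it is the passage to the twice-integrated equation, where the weight $y^{d-1}$ renders the kernel bounded and the standard fixed-point machinery applies. The only other point requiring care is that $z\mapsto z^\alpha$ is merely locally Lipschitz away from $0$ (with a possibly large exponent $\alpha$ when $\gamma$ is near $1$), which is why the contraction must be set up on a ball keeping $w$ uniformly positive.
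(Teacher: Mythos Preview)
Your proposal is correct and essentially matches the paper's proof: both rewrite the equation as the same twice-integrated fixed-point equation $w=T[w]$, exploit the cancellation between $s^{-(d-1)}$ and the inner weight $y^{d-1}$ to make the kernel bounded near the origin, and run a Banach contraction on a set keeping $w$ uniformly positive so that $z\mapsto z^\alpha$ is Lipschitz. The only cosmetic differences are that the paper carries out the continuation step by writing down an explicit second integral operator $H$ centred at $r_0>0$ rather than invoking standard ODE continuation, and that you additionally note $R=\infty$ automatically when $\gamma=1$; both are harmless.
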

\begin{proof}

\mybb{When $\gamma>1$:}

(\ref{*}) is equivalent to
\[w(r)=w_0-4\pi{\gamma-1\over\gamma}\int_0^r{1\over z^{d-1}}\int_0^zy^{d-1}w(y)^\alpha dydz:=T(w)(r).\]
Assume $w_0>\epsilon>0$, we claim that for small enough $\delta>0$, $T$ maps $C([0,\delta],[\epsilon,w_0])$ to itself. Note that for $r\in[0,\delta]$ and $w\in C([0,\delta],[\epsilon,w_0])$ we have
\begin{align*}
0&\leq 4\pi{\gamma-1\over\gamma}\int_0^r{1\over z^{d-1}}\int_0^zy^{d-1}w(y)^\alpha dydz\\
&\leq 4\pi{\gamma-1\over\gamma}w_0\int_0^r{1\over z^{d-1}}\int_0^zy^{d-1}dydz
=4\pi{\gamma-1\over\gamma}w_0\int_0^r{1\over z^{d-1}}{1\over d}z^ddz
={2\pi\over d}{\gamma-1\over\gamma}w_0\delta^2
\end{align*}
Choosing $\delta$ small enough we can make this smaller than $w_0-1$. It follows that $T(w)\in C([0,\delta],[\epsilon,w_0])$. Now for $u,v\in C([0,\delta],[\epsilon,w_0])$ we have using the first lemma
\begin{align*}
\|T(u)-T(v)\|_{C([0,\delta])}&\leq 4\pi{\gamma-1\over\gamma}\sup_{r\in[0,\delta]}\abs{\int_0^r{1\over z^{d-1}}\int_0^zy^{d-1}(u(y)^\alpha-v(y)^\alpha)dydz}\\
&\leq{2\pi\over d}{\gamma-1\over\gamma}\delta^2\max\{\alpha\epsilon^{\alpha-1},\alpha w_0^{\alpha-1}\}\|u-v\|_{C([0,\delta])}
\end{align*}
By shrinking $\delta>0$ further, we can make $T$ a contraction map. Hence a unique fixed point for $T$ exist. It follows that our equation has a unique solution in a small interval $[0,\delta]$. For $r_0>0$, we claim that as long as $w(r_0)>0$, we can extend the solution beyond $r_0$. Indeed, our equation
\[{1\over r^{d-1}}{d\over dr}\brac{r^{d-1}{dw\over dr}}=-4\pi{\gamma-1\over\gamma}w^\alpha\]
is equivalent to
\begin{align*}
w(r)&=w_{r_0}+\int_{r_0}^r{1\over z^{d-1}}\brac{r_0^{d-1}w'_{r_0}-4\pi{\gamma-1\over\gamma}\int_{r_0}^zy^{d-1}w(y)^\alpha dy}dz\\
&=w_{r_0}+{r_0^{d-1}w'_{r_0}\over d-2}\brac{{1\over r_0^{d-2}}-{1\over r^{d-2}}}-4\pi{\gamma-1\over\gamma}\int_{r_0}^r{1\over z^{d-1}}\int_{r_0}^zy^{d-1}w(y)^\alpha dydz:=H(w)(r)
\end{align*}
Fix $\epsilon>0$ such that $\epsilon<w_{r_0}$. A similar computation as above show that for small enough $\delta>0$, $T$ maps $C(\bar B_\delta(r_0),[\epsilon,w_0])$ to itself. Furthermore,
\begin{align*}
\|H(u)-H(v)\|_{C(\bar B_\delta(r_0))}&\leq 4\pi{\gamma-1\over\gamma}\sup_{r\in[0,\delta]}\abs{\int_{r_0}^r{1\over z^{d-1}}\int_{r_0}^zy^{d-1}(u(y)^\alpha-v(y)^\alpha)dydz}\\
&\leq 4\pi{\gamma-1\over\gamma}\delta^2\brac{r_0+\delta\over r_0-\delta}^{d-1}\max\{\alpha\epsilon^{\alpha-1},\alpha w_0^{\alpha-1}\}\|u-v\|_{C(\bar B_\delta(r_0))}
\end{align*}
By shrinking $\delta>0$ further, we can make $H$ a contraction map. Hence a unique fixed point for $H$ exist. It follows that our equation has a unique solution on small interval $\bar B_\delta(r_0)$. The overlapping bit with the previous solution agrees, so we have extended our solution.

It follows that a solution our equation exist on a maximal interval $[0,R)$ such that either $R=\infty$ or $\lim_{r\to R}w(r)=0$, noting that $w$ is a deceasing function because $w'$ is always non-positive as can be seen from (\ref{*}).

\mybb{When $\gamma>1$:}

This time we consider the (\ref{$h$-equation}) and (\ref{**}). Using the second lemma instead of the first, the same proof of existence carries over, without having to cap $h$ at 0. It follows that our equation exist on a maximal interval $[0,R)$ such that either $R=\infty$ or $\lim_{r\to R}h(r)=-\infty$.\qed
\end{proof}

\begin{theorem}
Suppose $w$ is a gas star. Then $w$ has compact support if
\[\gamma>{2d\over d+2}\qquad\text{ or equivalently }\qquad\alpha<{d+2\over d-2}\]
and infinitely support otherwise.
\end{theorem}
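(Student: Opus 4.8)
The plan is to read everything off the Pohozaev integral for $\gamma>1$, whose left-hand coefficient $2\pi\frac{\gamma-1}{\gamma}\brac{\frac{2d}{1+\alpha}-(d-2)}$ changes sign exactly at $\alpha=\frac{d+2}{d-2}$, i.e. at $\gamma=\frac{2d}{d+2}$. Write $C_\gamma:=2\pi\frac{\gamma-1}{\gamma}\brac{\frac{2d}{1+\alpha}-(d-2)}$ and let $B(r)$ denote the three boundary terms on the right, so that the identity reads $C_\gamma\int_0^r w^{\alpha+1}y^{d-1}\,dy=B(r)$; note $C_\gamma>0$ iff $\gamma>\frac{2d}{d+2}$. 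Since $w'\le 0$, the only two possibilities are compact support ($R<\infty$, so $w(R)=0$ by the existence theorem) or $R=\infty$ with $w>0$ throughout, and I will show that compact support is equivalent to $C_\gamma>0$.

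\emph{Compact support $\Rightarrow\gamma>2d/(d+2)$.} First I would simply evaluate the Pohozaev identity at $r=R$. As $w(R)=0$, the two boundary terms carrying a factor of $w$ vanish and only $\frac12 w'(R)^2R^d$ survives, giving $C_\gamma\int_0^R w^{\alpha+1}y^{d-1}\,dy=\frac12 w'(R)^2R^d$. The right-hand side is strictly positive because $w'(R)=-\frac{\gamma-1}{\gamma}m(R)R^{-(d-1)}<0$, and the integral on the left is strictly positive, forcing $C_\gamma>0$. Its contrapositive yields infinite support whenever $\gamma\le\frac{2d}{d+2}$; the isothermal endpoint $\gamma=1$ is immediate since $\rho=e^h>0$ never vanishes, consistent with $1\le\frac{2d}{d+2}$.

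\emph{$\gamma>2d/(d+2)\Rightarrow$ compact support.} Here I would argue by contradiction, assuming $R=\infty$, and the real work is to show $B(r)\to 0$. Using the Buchdahl-type bounds of Proposition~\ref{Buchdahl's inequality}, $\rho(r)\lesssim r^{-2/(2-\gamma)}$ and $m(r)\lesssim r^{d-2/(2-\gamma)}$, together with $w=\rho^{\gamma-1}$ and $w'(r)=-\frac{\gamma-1}{\gamma}m(r)r^{-(d-1)}$, I get $w(r)\lesssim r^{-2(\gamma-1)/(2-\gamma)}$ and $\abs{w'(r)}\lesssim r^{-\gamma/(2-\gamma)}$. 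Substituting into the three terms of $B(r)$, a direct computation shows each of $w'(r)^2r^d$, $w(r)^{\alpha+1}r^d$ and $w'(r)w(r)r^{d-1}$ is $\lesssim r^{(2d-(d+2)\gamma)/(2-\gamma)}$, and this exponent is negative exactly when $\gamma>\frac{2d}{d+2}$; hence $B(r)\to 0$. The same bounds make the integrand satisfy $w^{\alpha+1}y^{d-1}\lesssim y^{(2d-(d+2)\gamma)/(2-\gamma)-1}$, which is integrable on $[0,\infty)$, so $B(r)=C_\gamma\int_0^r w^{\alpha+1}y^{d-1}\,dy\to C_\gamma I_\infty$ with $I_\infty:=\int_0^\infty w^{\alpha+1}y^{d-1}\,dy>0$. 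Since $C_\gamma>0$ these two limits contradict, so $R<\infty$ and the star is compactly supported.

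The main obstacle is the converse direction: one must check that all three boundary terms share the single exponent $(2d-(d+2)\gamma)/(2-\gamma)$ and that it is negative precisely at the threshold, which is exactly where the Buchdahl bounds and the careful bookkeeping of exponents are needed; the forward direction, by contrast, is a one-line evaluation at the free boundary.
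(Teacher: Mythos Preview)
Your forward direction (compact support $\Rightarrow C_\gamma>0$) is identical to the paper's Step~1. For the converse, your route is more unified than the paper's: the paper splits into two sub-cases, first handling $(\alpha-1)(d-2)<2$ (equivalently $\gamma>2(d-1)/d$) by a direct comparison argument---a lower bound $w(r)\gtrsim r^{-(d-2)}$ coming from the mass already accumulated, played against the decay estimate---and only then running the Pohozaev-boundary-terms-vanish argument for the remaining range $(\alpha-1)(d-2)\ge 2$. You instead try to push the Pohozaev argument through the entire range $\gamma>2d/(d+2)$ in one stroke, which is cleaner.

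There is, however, a genuine gap in your invocation of Proposition~\ref{Buchdahl's inequality}. The bound $m(r)\lesssim r^{d-2/(2-\gamma)}$ you quote is only valid when the exponent $d-2/(2-\gamma)$ is positive, i.e.\ when $\gamma<2(d-1)/d$; for $\gamma\ge 2(d-1)/d$ that exponent is nonpositive, and since $m$ is strictly increasing the claimed bound is simply false (indeed the integral $\int_0^r y^{d-1-2/(2-\gamma)}\,dy$ in the proof of that proposition diverges at $0$). The repair is easy and keeps your unified strategy intact: in the regime $\gamma>2(d-1)/d$ the $\rho$-bound alone makes the total mass $\int_0^\infty y^{d-1}\rho\,dy$ finite, so $m$ is bounded and $|w'(r)|\lesssim r^{-(d-1)}$, which is stronger than the $r^{-\gamma/(2-\gamma)}$ you wrote; one then checks directly that $w'(r)^2r^d\lesssim r^{-(d-2)}\to 0$, $w'(r)w(r)r^{d-1}\lesssim r^{-2(\gamma-1)/(2-\gamma)}\to 0$, and $w(r)^{\alpha+1}r^d$ still carries your exponent $(2d-(d+2)\gamma)/(2-\gamma)<0$. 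With this case distinction inserted (or the borderline $\gamma=2(d-1)/d$ handled with a logarithm), your argument is complete and arguably tidier than the paper's two-step treatment.
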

\begin{proof}
When $\gamma=1$, (\ref{**}) gives
\[h'(r)\geq-4\pi{1\over r^{d-1}}e^{h_0}\int_0^ry^{d-1}dy=-{4\pi\over d}e^{h_0}r.\]
So
\[h(r)\geq h_0-{4\pi\over d}e^{h_0}\int_0^rydy=h_0-{2\pi\over d}e^{h_0}r^2.\]
So the gaseous star cannot have compact support.

We will prove the $\gamma>1$ case in three steps:

\mybb{Step 1:}

If the gaseous solution $w$ has compact support, then the Pohozaev integral gives
\begin{align*}
2\pi{\gamma-1\over\gamma}\brac{{2d\over 1+\alpha}-(d-2)}\int_0^Rw^{\alpha+1}y^{d-1}dy
={1\over 2}w'(R)^2R^d>0
\end{align*}
This means $w$ cannot have compact support if
\begin{align*}
{2d\over 1+\alpha}\leq d-2\qquad\iff\qquad\alpha\geq{d+2\over d-2}\qquad\iff\qquad\gamma\leq{d-2\over d+2}+1={2d\over d+2}.
\end{align*}

\mybb{Step 2:}

Now suppose the condition in the proposition hold but $w$ has infinite support. Fix some $\epsilon>0$. From (\ref{*}) we have
\[w'(r)\leq-{1\over r^{d-1}}\underbrace{4\pi{\gamma-1\over\gamma}\int_0^\epsilon y^{d-1}w(y)^\alpha dy}_{:=m_\epsilon}\qquad\text{for}\qquad r\geq\epsilon.\]
So
\[w(r)=-\int^\infty_rw'(y)dy\geq m_\epsilon\int^\infty_r{1\over y^{d-1}}dy={m_\epsilon\over d-2}{1\over r^{d-2}}.\]
Combining this and the decay estimate for $w$ we get
\[{2\pi\over d}{\gamma-1\over\gamma}\leq\begin{cases}\displaystyle{1\over\alpha-1}\brac{{1\over w(r)^{\alpha-1}}-{1\over w_0^{\alpha-1}}}\brac{d-2\over m_\epsilon}^{2\over d-2}w(r)^{2\over d-2}&\qquad\text{ when }\qquad\alpha\not=1\\
\displaystyle\brac{\ln w_0-\ln w(r)}\brac{d-2\over m_\epsilon}^{2\over d-2}w(r)^{2\over d-2}&\qquad\text{ when }\qquad\alpha=1\end{cases}.\]
If
\[{2\over d-2}-(\alpha-1)>0\qquad\text{ or equivently }\qquad(\alpha-1)(d-2)<2,\]
then the RHS tends to 0 as $r\to\infty$ but not the LHS, this is a contradiction. So $R<\infty$. So we know there exist compactly supported gas solutions at least when $(\alpha-1)(d-2)<2$. 

\mybb{Step 3:}

Last step, we prove compact support for when $(\alpha-1)(d-2)<2$. So we can assume now
\[(\alpha-1)(d-2)\geq 2\qquad\iff\qquad d\geq{2\alpha\over\alpha-1}.\]
In particular $\alpha>1$. Note that by the decay estimate we must have
\[w(r)^{\alpha-1}r^{\sigma}\to 0\qquad\text{as}\qquad r\to\infty\qquad\text{for any}\qquad\sigma\in[0,2).\]
The decay estimate also gives us
\[w(r)\leq\brac{w_0^{-(\alpha-1)}+(\alpha-1){2\pi\over d}{\gamma-1\over\gamma}r^2}^{-{1\over\alpha-1}}\leq\brac{(\alpha-1){2\pi\over d}{\gamma-1\over\gamma}}^{-{1\over\alpha-1}}r^{-{2\over\alpha-1}}.\]
Now using $(*)$, we can bound $w'$ as follows
\begin{align*}
0\geq w'(r)&=-4\pi{\gamma-1\over\gamma}{1\over r^{d-1}}\int_0^ry^{d-1}w(y)^\alpha dy
\gtrsim-{1\over r^{d-1}}\brac{w_0^\alpha+\int_1^ry^{d-1-{2\alpha\over\alpha-1}}dy}\\
&\gtrsim\begin{cases}\displaystyle-{C+r^{d-{2\alpha\over\alpha-1}}\over r^{d-1}}\\\displaystyle-{C+\ln r\over r^{d-1}}\end{cases}\gtrsim\begin{cases}\displaystyle-r^{-(d-1)}-r^{1-{2\alpha\over\alpha-1}}&\displaystyle\qquad\text{ when }\qquad d\not={2\alpha\over\alpha-1}\\-r^{-(d-1)}-r^{-(d-1)}\ln r&\displaystyle\qquad\text{ when }\qquad d={2\alpha\over\alpha-1}\end{cases}
\end{align*}
where $C$ is some constant. Recall we assume in this step
\[d\geq{2\alpha\over\alpha-1}\qquad\iff\qquad d-1\geq{2\alpha\over\alpha-1}-1.\]
So the estimate means that
\[w'(r)r^{\sigma}\to 0\qquad\text{as}\qquad r\to\infty\qquad\text{for any}\qquad\sigma\in\bigg[0,{2\alpha\over\alpha-1}-1\bigg).\]
Using these estimates, we see that $w'(r)^2r^d\to 0$ as $r\to\infty$ if
\[{4\alpha\over\alpha-1}-2>d\qquad\iff\qquad 4\alpha-2\alpha+2>d\alpha-d\qquad\iff\qquad\alpha<{d+2\over d-2}.\]
Also $w(r)^{\alpha+1}r^d\to 0$ as $r\to\infty$ if
\[2{\alpha+1\over\alpha-1}>d\qquad\iff\qquad 2\alpha+2>d\alpha-d\qquad\iff\qquad\alpha<{d+2\over d-2}.\]
And finally, $w'(r)w(r)r^{d-1}\to 0$ as $r\to\infty$ if
\[{2\alpha\over\alpha-1}-1+{2\over\alpha-1}>d-1\qquad\iff\qquad 2{\alpha+1\over\alpha-1}>d\qquad\iff\qquad\alpha<{d+2\over d-2}.\]
Since we assumed $w$ has infinite support, the Pohozaev integral holds for $r\in[0,\infty)$. Taking limit $r\to\infty$ in the Pohozaev integral then gives
\[2\pi{\gamma-1\over\gamma}\brac{{2d\over 1+\alpha}-(d-2)}\int_0^\infty w^{\alpha+1}y^{d-1}dy=0.\]
But the LHS must be strictly positive, this is a contraction. Hence $w$ must be compactly supported.\qed
\end{proof}

\begin{proposition}
When $\gamma=2d/(d+2)$ we have explicit steady state solution
\[w(r)=A\brac{1+{2\pi\over d^2}A^{4\over d-2}r^2}^{1-d/2}\quad\text{or equivalently}\quad\rho(r)=C\brac{1+{2\pi\over d^2}C^{4\over d+2}r^2}^{-1-d/2}.\]
And the support of the liquid star is
\begin{align*}
R=\brac{{d^2\over 2\pi}C^{-{4\over d+2}}(C^{2\over d+2}-1)}^{1\over 2}.
\end{align*}
\end{proposition}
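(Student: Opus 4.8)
The plan is to treat this as a guess-and-verify computation: substitute the stated profile into the enthalpy equation (\ref{enthalpy equation}) to confirm it is a steady state, and then locate the liquid boundary as the level set $\{\rho=1\}$.

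First I would specialise the enthalpy equation to $\gamma=2d/(d+2)$. A short computation gives $\gamma-1=(d-2)/(d+2)$, hence $\alpha=(\gamma-1)^{-1}=(d+2)/(d-2)$ and $(\gamma-1)/\gamma=(d-2)/(2d)$, so that (\ref{enthalpy equation}) reads
\[{1\over r^{d-1}}{d\over dr}\brac{r^{d-1}{dw\over dr}}=-{2\pi(d-2)\over d}\,w^{(d+2)/(d-2)}.\]
Next I would substitute the candidate $w(r)=A\brac{1+\beta r^2}^{p}$ with $p=1-d/2$ and $\beta=(2\pi/d^2)A^{4/(d-2)}$ and compute the left-hand side directly. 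The key simplification is that after differentiating $r^{d-1}w'$ and factoring out $(1+\beta r^2)^{p-2}$, the remaining bracket is $d+\beta r^2(d+2p-2)$; the coefficient $d+2p-2$ vanishes exactly because $p=1-d/2$, so the $r^2$ term drops and the left-hand side collapses to $2Apd\beta\,(1+\beta r^2)^{p-2}$. I would then match this with the right-hand side: the exponents agree because $p-2=-(d+2)/2=p\alpha$, and equating the prefactors $-Ad(d-2)\beta$ and $-\tfrac{2\pi(d-2)}{d}A^{(d+2)/(d-2)}$ is precisely the relation pinning down $\beta=(2\pi/d^2)A^{4/(d-2)}$. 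This confirms the $w$-profile solves the equation. The $\rho$-form then follows from $\rho=w^{\alpha}=w^{(d+2)/(d-2)}$, which sends the exponent $p$ to $p\alpha=-1-d/2$ and the amplitude to $C:=A^{(d+2)/(d-2)}=\rho(0)$, with $\beta$ re-expressed as $(2\pi/d^2)C^{4/(d+2)}$.

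Finally, for the radius of the liquid star I would use that its free boundary sits where the pressure vanishes, i.e.\ where $\rho=1$ (since $p=\rho^\gamma-1$ after the normalisation $K=C=1$). Setting $\rho(R)=1$ in the explicit profile gives
\[\brac{1+{2\pi\over d^2}C^{4\over d+2}R^2}^{-1-d/2}=C^{-1}.\]
Raising both sides to the power $-2/(d+2)$ yields $1+(2\pi/d^2)C^{4/(d+2)}R^2=C^{2/(d+2)}$, and solving for $R^2$ produces the stated formula $R=\brac{\tfrac{d^2}{2\pi}C^{-4/(d+2)}(C^{2/(d+2)}-1)}^{1/2}$.

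There is no genuine obstacle here, since the proposition supplies the formula and only verification is required; the single substantive point is the cancellation $d+2p-2=0$ that makes the power-law ansatz close the nonlinear equation, and everything else is careful exponent bookkeeping.
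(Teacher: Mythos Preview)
Your proof is correct and follows essentially the same approach as the paper: substitute the candidate profile into the enthalpy equation and match exponents and prefactors, then read off the liquid radius from $\rho(R)=1$. The only cosmetic difference is that the paper begins from a slightly more general ansatz $w(r)=A(1+Br^b)^a$ and \emph{derives} the constraints $b=2$, $a=1-d/2$ (and hence $\gamma=2d/(d+2)$), whereas you take these as given and verify; the key cancellation and exponent bookkeeping are identical.
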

\begin{proof}
Consider
\begin{align*}
w(r)&=A(1+Br^b)^a\\
w'(r)&=ABabr^{b-1}(1+Br^b)^{a-1}\\
w''(r)&=ABab(b-1)r^{b-2}(1+Br^b)^{a-1}+AB^2ab^2(a-1)r^{2(b-1)}(1+Br^b)^{a-2}
\end{align*}
Substitute into the steady state equation
\[w''+(d-1)r^{-1}w'=-4\pi{\gamma-1\over\gamma}w^\alpha\]
we get
\begin{align*}
&-4\pi{\gamma-1\over\gamma}A^\alpha(1+Br^b)^{\alpha a}\\
&=ABab(b-1+d-1)r^{b-2}(1+Br^b)^{a-1}+AB^2ab^2(a-1)r^{2(b-1)}(1+Br^b)^{a-2}\\
&=ABab\brac{(b-1+d-1)r^{b-2}+B(b(a-1)+(b-1+d-1))r^{2(b-1)}}(1+Br^b)^{a-2}
\end{align*}
For this to have the possibility to hold, we need $b=2$ and $a=1-d/2$. Then the equation becomes
\begin{align*}
-4\pi{\gamma-1\over\gamma}A^\alpha(1+Br^2)^{\alpha(1-d/2)}
=ABd(2-d)(1+Br^2)^{-(1+d/2)}
\end{align*}
For this to have the possibility to hold, we need
\[{1\over\gamma-1}=\alpha=-{2+d\over 2-d}={d+2\over d-2}\qquad\iff\qquad \gamma={d-2\over d+2}+1={2d\over d+2}.\]
And then we also need
\[-4\pi{d-2\over 2d}A^{d+2\over d-2}=ABd(2-d)\qquad\iff\qquad 2\pi A^{4\over d-2}=Bd^2\qquad\iff\qquad B={2\pi\over d^2}A^{4\over d-2}.\]
So when $\gamma=2d/(d+2)$ we have explicit steady state solution
\[w(r)=A\brac{1+{2\pi\over d^2}A^{4\over d-2}r^2}^{1-d/2}\quad\text{or equivalently}\quad\rho(r)=C\brac{1+{2\pi\over d^2}C^{4\over d+2}r^2}^{-1-d/2}.\]
The support $R$ of the liquid star is then given by
\begin{align*}
1=C\brac{1+{2\pi\over d^2}C^{4\over d+2}R^2}^{-1-d/2}&\qquad\iff\qquad 1+{2\pi\over d^2}C^{4\over d+2}R^2=C^{2\over d+2}\\
&\qquad\iff\qquad R=\brac{{d^2\over 2\pi}C^{-{4\over d+2}}(C^{2\over d+2}-1)}^{1\over 2}.\qed
\end{align*}
\end{proof}

\begin{proposition}[Self-similarity of solutions]
Let $\rho$ be a gaseous steady state. Then $\rho_\kappa(r)=\kappa\rho(\kappa^{1-\gamma/2}r)$ is a gaseous steady state for any $\kappa>0$, and the corresponding liquid star has support $R=\kappa^{-(1-\gamma/2)}\rho^{-1}(1/\kappa)$.
\end{proposition}
\begin{proof}
First we deal with the $\gamma>1$ case. Recall the steady state equation
\[w''(r)+(d-1)r^{-1}w'(r)=-4\pi{\gamma-1\over\gamma}w(r)^\alpha.\]
Let $v(r)=\kappa w(\kappa^\beta r)$. Then $v'(r)=\kappa^{\beta+1}w'(\kappa^\beta r)$ and $v''(r)=\kappa^{2\beta+1}w''(\kappa^\beta r)$. So
\begin{align*}
v''(r)+(d-1)r^{-1}v'(r)&=\kappa^{2\beta+1}w''(\kappa^\beta r)+(d-1)(\kappa^\beta r)^{-1}\kappa^{2\beta+1}w'(\kappa^\beta r)\\
&=-\kappa^{2\beta+1}4\pi{\gamma-1\over\gamma}w(\kappa^\beta r)^\alpha
=-\kappa^{2\beta+1-\alpha}4\pi{\gamma-1\over\gamma}v(r)^\alpha
\end{align*}
So if we choose
\[\beta={1\over 2}(\alpha-1)={1\over 2}{2-\gamma\over\gamma-1},\]
then $v$ is again a solution to the steady state equation. Converting back to $\rho$ gives the desired result.

For $\gamma=1$, it is clear by substitution that $\rho_\kappa(r)=\kappa\rho(\sqrt\kappa r)$ is a solution to the steady state equation
\[0=\lpc(\ln\rho)+4\pi\rho={\rho''\over\rho}-{(\rho')^2\over\rho^2}+(d-1){1\over r}{\rho'\over \rho}+4\pi\rho.\qed\]
\end{proof}


\begin{thebibliography}{9}

\bibitem{Chandrasekhar} 
Chandrasekhar, S.,
\textit{An introduction to the study of stellar structure}. 
University of Chicago Press, Chicago, 1939.

\bibitem{CHRISTODOULOU} 
Christodoulou, D.,
\textit{The Formation of Shocks in 3-Dimensional Fluids}. 
EMS Monographs in Mathematics, EMS Publishing House 2007.

\bibitem{Coutand Shkoller}
Coutand, D., Shkoller, S.,
Well-posedness in smooth function spaces for the moving-boundary three-dimensional compressible Euler equations in physical vacuum. 
\textit{Arch. Ration. Mech. Anal.} \textbf{206} (2012), no. 2, 515–616.

\bibitem{Coutand Hole Shkoller}
Coutand , D., Hole , J., Shkoller , S.,
Well-posedness of the free-boundary compressible 3-D Euler equations with surface tension and the zero surface tension limit.
\textit{SIAM J. Math. Anal.} \textbf{45}, 3690–3767, 2013.

\bibitem{Dacorogna Moser}
Dacorogna B., Moser, J.,
On a partial differential equation involving the Jacobian determinant.
\textit{Ann. Inst. H. Poincar\'e Anal. Non Lin\'eaire} \textbf{7} (1990), no. 1, 1–26.

\bibitem{Deng Xiang Yang}
Deng, Y., Xiang, J., Yang, T.,
Blowup phenomena of solutions to Euler-Poisson equations. \textit{J.Math. Anal. Appl.} \textbf{286} (2003), 295–306.


\bibitem{Ginsberg Lindblad Luo}
Ginsberg D., Lindblad H., Luo C.,
Local well-posedness for the motion of a compressible, self-gravitating liquid with free surface boundary.,
\textit{Archive for Rational Mechanics and Analysis} (2019): 1-131.

\bibitem{GOLDREICH WEBER}
Goldreich, P., Weber, S.,
Homologously collapsing stellar cores. 
\textit{Astrophys. J.} \textbf{238} (1980), 991–997.

\bibitem{Gu Lei}
Gu, X., Lei, Z.,
Local Well-posedness of the three dimensional compressible Euler–Poisson equations with physical vacuum.
\textit{Journal de Mathmatiques Pures et Appliques} \textbf{105}, 5 (2016), 662–723.

\bibitem{Hadzic Jang Lam}
Had\v{z}i\'c, M., Jang, J., Lam, K.M.,
Nonradial stability of self-similarly expanding Goldreich-Weber stars. \textit{Preprint}, arXiv:2212.11420

\bibitem{Hadzic Jang}
Had\v{z}i\'c, M., Jang, J.,
A class of global solutions to the Euler-Poisson system.
\textit{Commun. Math. Phys.,} \textbf{370}, no 2, 475-505 (2019)

\bibitem{Hadzic Jang 2}
Had\v{z}i\'c, M., Jang, J.,
Nonlinear stability of expanding star solutions in the radially-symmetric mass-critical Euler-Poisson system. 
\textit{Comm. Pure Appl. Math.}, DOI: 10.1002/cpa.21721

\bibitem{Hadzic Lin Rein}
Had\v{z}i\'c, M., Lin, Z., Rein, G.,
Stability and Instability of Self-Gravitating Relativistic Matter Distributions.
\textit{Arch Rational Mech Anal} \textbf{241}, 1–89 (2021)

\bibitem{Hadzic Lin}
Had\v{z}i\'c, M., Lin, Z.,
Turning Point Principle for Relativistic Stars.
\textit{Commun. Math. Phys.} \textbf{387}, 729–759 (2021)

\bibitem{Hao Miao}
Hao, Z., Miao, S.,
On nonlinear instability of liquid Lane-Emden stars. \textit{Preprint}, arXiv:2304.06217

\bibitem{Heinzle}
Heinzle, J. M.,
(In)finiteness of spherically symmetric static perfect fluids .
\textit{Classical and Quantum Gravity}, Volume \textbf{19}, Issue 11, pp. 2835-2851 (2002)

\bibitem{Ifrim Tataru}
Ifrim, M., Tataru, D.,
The compressible Euler equations in a physical vacuum: a comprehensive Eulerian approach.
\textit{Preprint at arXiv,} page arXiv:2007.05668, 2020

\bibitem{Jang Masmoudi}
Jang, J., Masmoudi, N.,
Well-posedness of compressible Euler equations in a physical vacuum.
\textit{Communications on Pure and Applied Mathematics} \textbf{68}, no. 1, 61–111 (2015)

\bibitem{Jang 1}
Jang, J.,
Nonlinear Instability in Gravitational Euler-Poisson system for $\gamma = 6/5$. \textit{Arch. Ration.
Mech. Anal.} \textbf{188} (2008), 265–307.

\bibitem{Jang 2}
Jang, J.,
Nonlinear Instability Theory of Lane-Emden stars. 
\textit{Comm. Pure Appl. Math.} \textbf{67} (2014), no. 9, 1418–1465.

\bibitem{Lindblad}
Lindblad, H.,
Well posedness for the motion of a compressible liquid with free surface boundary.
\textit{Comm. Math. Phys.}, \textbf{260} (2005), pp. 319–392.

\bibitem{Lin Zeng}
Lin, Z., Zeng, C.,
Separable Hamiltonian PDEs and Turning point principle for stability of gaseous stars.
\textit{Comm. Pure. Appl. Math.}, accepted

\bibitem{Lin}
Lin, S.-S.,
Stability of gaseous stars in spherically symmetric motions. \textit{SIAM J. Math. Anal.} \textbf{28} (1997), no. 3, 539–569.

\bibitem{Luo Smoller}
Luo, T., Smoller, J.,
Existence and Nonlinear Stability of Rotating Star Solutions of the Compressible Euler-Poisson Equations. 
\textit{Arch. Ration. Mech. Anal.} \textbf{191} (2009), 3, 447–496.

\bibitem{MAJDA} 
Majda, A.,
Compressible fluid flow and systems of conservation laws in several space variables. 
\textit{Applied Mathematical Sciences,} \textbf{53}. Springer, New York, 1984.

\bibitem{Jang Makino}
Makino, T., Jang, J.,
Linearized analysis of barotropic perturbations around spherically symmetric gaseous stars governed by the Euler–Poisson equations.
\textit{J. Math. Phys.} \textbf{61}, 051508 (2020)

\bibitem{Makino}
Makino, T.,
Blowing up solutions of the Euler-Poisson equation for the evolution of gaseous stars.
\textit{Transport Theory Statist. Phys.} \textbf{21} (1992), 615–624.

\bibitem{Miao Shahshahani Wu}
Miao S., Shahshahani S., Wu S.,
Well-posedness for Free Boundary Hard Phase Fluids with Minkowski Background and their Newtonian limit.
\textit{Cambridge Journal of Mathematics,} Volume \textbf{9}, no. 2, 269-350, 2021.

\bibitem{Oliynyk}
Oliynyk, T. A.,
Dynamical relativistic liquid bodies.
\textit{arXiv e-prints,} page arXiv:1907.08192, Jul 2019

\bibitem{Rein}
Rein, G.,
Non-linear stability of gaseous stars. 
\textit{Arch. Ration. Mech. Anal.} \textbf{168} (2003), no. 2, 115–130.

\bibitem{Shapiro Teukolsky} 
Shapiro, S.L., Teukolsky S.A.,
\textit{Black Holes, White Dwarfs and Neutron Stars}. 
John Wiley \& Sons, 1983.

\bibitem{SIDERIS} 
Sideris, T. C.,
Formation of singularities in three-dimensional compressible fluids. 
\textit{Comm. Math. Phys.} \textbf{101(4)}, 475–485 (1985)

\bibitem{Trakhinin}
Trakhinin, Y. ,
Local existence for the free boundary problem for the non-relativistic and relativistic compressible Euler equations with a vacuum boundary condition.
\textit{Comm. Pure Appl. Math,} \textbf{62} (2009), pp. 1551–1594.

\end{thebibliography}
\end{document}